\renewcommand{\AA}{\mathbb{A}}
\newcommand{\CC}{\mathbb{C}}
\newcommand{\FF}{\mathbb{F}}
\newcommand{\QQ}{\mathbb{Q}}
\newcommand{\ZZ}{\mathbb{Z}}
\newcommand{\cC}{\mathcal{C}}
\newcommand{\cO}{\mathcal{O}}
\newcommand{\fm}{\mathfrak{m}}
\newcommand{\NP}{\mathrm{NP}}
\setlist[enumerate,itemize]{leftmargin=*}
\newtheorem{theorem}{Theorem}[section]
\newtheorem{proposition}[theorem]{Proposition}
\newtheorem{lemma}[theorem]{Lemma}
\theoremstyle{definition}
\newtheorem{definition}[theorem]{Definition}
\newtheorem{conjecture}[theorem]{Conjecture}
\theoremstyle{remark}
\newtheorem{remark}[theorem]{Remark}
\newtheorem{notation}[theorem]{Notation}
\newtheorem{algorithm}[theorem]{Algorithm}
\numberwithin{equation}{theorem}
\DeclareMathOperator{\Hom}{Hom}
\DeclareMathOperator{\Gal}{Gal}
\DeclareMathOperator{\GL}{GL}
\DeclareMathOperator{\Sym}{Sym}
\DeclareMathOperator{\Proj}{Proj}
\DeclareMathOperator{\NS}{NS}
\DeclareMathOperator{\Ind}{Ind}
\title{Slopes of modular forms and the ghost conjecture}
\begin{document}
\author{Eunsu Hur}
\maketitle
\begin{abstract}
    We give an algorithm to compute the slope sequence of modular forms with fixed Galois components from its first few entries, which is a refined version of the conjecture of \cite{buzconj}. We use the results of Liu et al. on the ghost conjecture from \cite{ghost1}. These symmetries in slope sequences have potential implication to unexplained symmetries in many Coleman-Mazur eigencurves.
\end{abstract}
\section{Introduction}

In this paper, we prove a variant of the slope conjecture of Buzzard \cite{buzconj} which computes the slope sequence of modular forms with a fixed Galois component. We prove how one can obtain the full sequence from only having the first few entries of the sequence at hand, with an algorithm that is polynomial. Theoretically, Buzzard's conjecture predicts the slope sequence of modular forms given by the operator $T_p$ on the space $S_k(\Gamma_0(N))$ and has concrete implications of symmetry in many Colemna-Mazur eigencurves as noted in \cite{buzconj}. We progress by constructing the slope sequences inductively using patterns of the Ghost series. This is done using the results of \cite{liu2023slopes} which proves many cases of conjectures given by \cite{ghost1} and \cite{ghost2}. Our approach uses the ghost theorem in \cite{liu2023slopes} and various combinatorial properties of the ghost series. Our main theorem is as follows. Denote by $v^{(\epsilon)}_{\bar r}(k)$ the sequence of slopes obtained for weight $k$ by a variant of Buzzard's algorithm with given input of the dimension of spaces of modular forms with even weight $k<p+3$ and character $\epsilon$ with Galois component $\bar r$. Then, we have the following:

    \begin{theorem}[$\bar r$-Slope theorem]\label{mainthm}
        Let $p\geq11$ a prime, 
            \begin{equation}
                \bar r:\Gal_\QQ\to\GL_2(\FF)
            \end{equation}
        an absolutely irreducible representation such that $\bar r|_{\Gal_{\QQ_p}}$ is reducible and 
            \begin{equation}
                \bar r|_{I_{\QQ_p}}\simeq\begin{pmatrix}\omega_1^{a+b+1} & *\neq0\\0&\omega_1^b\end{pmatrix}
            \end{equation}
         with $a\in\{2,\ldots,p-5\}$ and $b\in\{0,\ldots,p-2\}$. Then for all $k\equiv a+2b+2\mod p-1$, the sequence given by $v^{(\epsilon=1\times\omega^{a+2b})}_{\bar r}(k)$ equals the sequence of slopes of the space of modular forms $S_k(\Gamma_0(N))_{\bar r}$ in increasing order.
    \end{theorem}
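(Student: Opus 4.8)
The plan is to reduce \cref{mainthm} to a purely combinatorial comparison between the Newton polygon of the ghost series attached to $\bar r$ and the output of the variant of Buzzard's algorithm, and then to settle that comparison by induction on the weight.

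First I would invoke the ghost theorem of \cite{liu2023slopes}. The standing hypotheses — $p \geq 11$, $\bar r$ absolutely irreducible with $\bar r|_{\Gal_{\QQ_p}}$ reducible and non-split on inertia (the condition $* \neq 0$), and $\bar r|_{I_{\QQ_p}}$ of the displayed shape with $a \in \{2,\dots,p-5\}$ — are exactly a special case of the genericity conditions under which \cite{liu2023slopes} establishes the ghost conjecture of \cite{ghost1, ghost2}. It follows that, for every weight $k \equiv a+2b+2 \bmod p-1$, the $U_p$-slopes on the $\bar r$-part of overconvergent $p$-adic modular forms of tame level $N$ and character $\epsilon = 1 \times \omega^{a+2b}$ are read off from the Newton polygon $\NP$ of the ghost series $G_{\bar r}(w,t)$ specialized at $w = w_k$; the residue class of $k$ and the character $\epsilon$ are precisely those that make $\bar r$ the residual representation attached to the forms in $S_k(\Gamma_0(N))_{\bar r}$, so this is the relevant series. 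The $T_p$-slopes of $S_k(\Gamma_0(N))_{\bar r}$ are then recovered from $\NP(G_{\bar r}(w_k,t))$ by the standard dictionary between $T_p$ at level $N$ and $U_p$ at level $Np$ — Coleman's classicality criterion together with the duality $\alpha \leftrightarrow k-1-\alpha$ of $p$-stabilizations — as already used in \cite{ghost1}.

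Next I would unwind the explicit description of $G_{\bar r}$. Writing $G_{\bar r}(w,t) = \sum_{n\geq 0} g_n(w)\, t^n$ with $g_n(w) = \prod_k (w - w_k)^{m_n(k)}$, the product taken over classical weights $k$ in the fixed residue class, the multiplicities $m_n(k)$ are the piecewise-linear ``tent'' functions built out of the dimensions of the plus and minus subspaces of $S_k(\Gamma_0(N))_{\bar r}$. The elementary but decisive point is that $k \mapsto \dim S_k(\Gamma_0(N))_{\bar r}$, and likewise the relevant new- and old-dimensions, is eventually an explicit affine function of $k$ along the progression, whose two parameters are already pinned down by the values in even weights $k < p+3$. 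Hence the finitely many dimension inputs of the algorithm determine every $m_n(k)$, and therefore the whole series $G_{\bar r}$ together with all of its specialized Newton polygons.

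The heart of the proof — and the step I expect to be the main obstacle — is to show that the variant of Buzzard's algorithm, fed with those same small-weight dimensions, reproduces the relevant part of $\NP(G_{\bar r}(w_k,t))$ for every $k$ in the progression. I would argue by induction on $k$: the weights $k < p+3$ form the base case, where $\NP(G_{\bar r}(w_k,t))$ has no interior vertex coming from a higher classical weight and so is determined by its endpoint dimensions alone — this is the algorithm's seed. For the inductive step one exploits the self-similar (``halving'') structure of the ghost Newton polygon: the near-central block of slopes in weight $k$ is a rescaled copy of the full slope polygon in a strictly smaller weight $k'$ of the same residue class, because the values of the multiplicity functions $m_n(\cdot)$ near their maxima are governed by the ghost zeros at those smaller weights — which is exactly the recursion that Buzzard's algorithm runs. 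Matching the two recursions term by term, and checking that the affine correction of the previous paragraph is faithfully reproduced by the algorithm's extrapolation, is where the real work lies; the technical crux is a uniform control of the vertices of $\NP(G_{\bar r}(w_k,t))$, and of the $m_n$ near their peaks, as $k$ ranges over the residue class. Finally, the polynomiality claim falls out of this analysis, since reaching weight $k$ costs $O(\log k)$ halving steps and $O(k)$ arithmetic operations.
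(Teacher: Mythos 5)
Your high-level strategy matches the paper's: invoke the ghost theorem of \cite{liu2023slopes} to replace slopes by the Newton polygon $\NP(G_{\bar\rho}^{(\epsilon)}(w_k,-))$, seed the induction with weights $k<p+3$, and argue inductively on $k$ by exhibiting the ghost Newton polygon as built from ghost Newton polygons at smaller weights. That much is right, and you are also right that $p$-stabilization compatibility translates $T_p$ at level $N$ into $U_p$ at level $Np$. But the proposal stops precisely where the proof begins, and the one piece of structural content you do offer about the inductive step is not what actually happens.

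The decisive gap is that you describe the recursion as ``the near-central block of slopes in weight $k$ is a rescaled copy of the full slope polygon in a strictly smaller weight $k'$.'' The paper's recursion is a three-piece decomposition, not a central rescaling: the first $d_{k_1}^{ur}(\epsilon_1)$ slopes are copied from weight $k_1$ (Lemma~\ref{firstk_1} / Lemma~\ref{firstk_1var}), a middle block is obtained from weight $k_2$ — either reflected by Atkin--Lehner through a twisted character $\epsilon''$ (\cref{atkinlehner}, Lemma~\ref{atkinlehner k_2}) in the case $y+z\le p-1$, or clipped against $e_2$ via ghost duality (\cref{ghostdual}) with a possible $+1$ shift (Lemma~\ref{midtermscase2}, Lemma~\ref{midtermcase3}) in the other two cases — and the tail is the weight-$(2B-k)$ sequence shifted by $e$ via the theta-map compatibility (\cref{theta}, Theorem~\ref{final}). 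The trichotomy on $(y,z)$ relative to $p-1$ and the resulting different treatment of the middle block is the algorithm; none of it appears in your sketch. You also cannot get away with ``uniform control of the vertices of $\NP(G_{\bar r}(w_k,t))$ near their peaks'' as a black box: the paper needs the specific vertex criterion through near-Steinberg ranges (Theorem~\ref{NPpts}, used in Theorem~\ref{smallweightoconv} to show $(d_k^{ur}(\epsilon_1),v_p(g_{d_k^{ur}(\epsilon_1)}^{(\epsilon)}(w_{k_1})))$ lies on the polygon), and the quantitative convexity bound (\cref{boundondifDelta}) derived from Lemmas~\ref{Delta'bound} and~\ref{diffDD'}. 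Finally, the passage from the family of small-weight comparisons to a weight-$k$ statement uses the Gouv\^ea--Mazur-type inclusion Lemma~\ref{GuMaz}, which you do not mention. In short, you have restated the theorem's premise (ghost theorem plus induction) and correctly flagged that the combinatorics is the crux, but you have not produced the comparison lemmas that match Buzzard's recursion to the ghost Newton polygon, and the one structural claim you commit to (a central halving) misdescribes the decomposition.
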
 

In the proof the theorem, we use the following recent results of \cite{liu2023slopes}. First, we let $m(\bar r)$ be the constant 

    \begin{equation}
        m(\bar r)=\frac{(p-1)\dim S_k(\Gamma_0(Np);\omega^{k-1-c})_{\bar r}}{2k}.
    \end{equation}
We note that \cref{mainthm} is in the form of classical modular forms, but all but the appendix of the paper is written in the language of abstract modular forms, which include $\GL_2/\QQ$ automorphic forms and we will shift our notation starting in \cref{properties}.

    \begin{theorem}[Ghost theorem, ignoring the case when $\bar\rho$ is split]
        Assume $p\geq 11.$ If $\bar r$ satisfies the conditions imposed in the theorem above. Then for every $w_\star\in\fm_{\CC_p}$, the Newton polygon\footnote{defined in Definition~\ref{NP}} $\NP(C_{\bar r}(w_\star,-))$, where $C_{\bar r}$ is the characteristic power series of $\bar r$-locallized weight $k$ overconvergent modular forms when $w_\star=w_k=\exp(p(k-2))$, is the same as the Newton polygon $\NP(G_{\bar\rho}(w_\star,-))$, stretched by $m(\bar r)$ (except possibly for their slope zero parts which is the case for when $\bar\rho$ is split). 
    \end{theorem}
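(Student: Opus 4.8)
The plan is to deduce this from the ghost theorem of \cite{liu2023slopes}, so that the work reduces to matching the hypotheses and, more delicately, the normalizations. Recall that $C_{\bar r}(w,t)=\sum_n c_n(w)t^n$ is the characteristic power series of the $U_p$-operator on the $\bar r$-localized space of overconvergent modular forms of tame level $N$, viewed as a family over the rigid weight disc via $w=w_k=\exp(p(k-2))$ at classical even weights $k$, while $G_{\bar\rho}(w,t)=\sum_n g_n(w)t^n$ is the ghost series attached to $\bar\rho=\bar r|_{\Gal_{\QQ_p}}$, with $g_n(w)=\prod_k(w-w_k)^{m_n(k)}$ for multiplicities $m_n(k)$ defined combinatorially from the dimensions of spaces of classical forms of weight $k$. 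Both series are entire in $t$ with Iwasawa-function coefficients in $w$, so it will suffice to establish the equality of Newton polygons on a Zariski-dense set of weights and then propagate to all $w_\star$.

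First I would check the hypotheses. The ghost conjecture of \cite{ghost1} concerns a reducible residual local representation $\bar\rho$; in the non-split case treated here it has the stated shape on inertia with $*\neq0$, and \cite{liu2023slopes} proves the conjecture precisely when $p\geq11$ and $a\in\{2,\dots,p-5\}$ --- the excluded values $a\in\{0,1\}$ and $a\in\{p-4,p-3,p-2\}$ being the degenerate cases in which the crystabelline local deformation ring, or the halo estimates near the boundary of weight space, lose their required shape. These are exactly the constraints imposed in \cref{mainthm}, so the hypotheses transcribe with nothing further to verify; absolute irreducibility of $\bar r$ enters only to ensure that the $\bar r$-localized Hecke module is nonzero and that the patched module of \cite{liu2023slopes} is defined (and, since we work with classical $\GL_2/\QQ$ forms, to transport the statement across Jacquet--Langlands, which preserves $U_p$-slopes).

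Next I would pin down the normalization. The factor $m(\bar r)=\frac{(p-1)\dim S_k(\Gamma_0(Np);\omega^{k-1-c})_{\bar r}}{2k}$ is the rank of the relevant module of classical forms divided by the degree of the weight map; it records that $G_{\bar\rho}$ is built from forms of a single fixed tame level whereas $C_{\bar r}$ sees the full $\bar r$-isotypic space, so that dilating $\NP(G_{\bar\rho}(w_\star,-))$ horizontally and vertically by $m(\bar r)$ compensates. To make this precise I would evaluate both series at classical weights $w_k$: there classicality in small slope, the theory of $p$-stabilizations and Atkin--Lehner, and the dimension formula for $S_k(\Gamma_0(Np))_{\bar r}$ determine both Newton polygons below the classical bound, and the combinatorial $m_n(k)$ are engineered so that the stretched ghost polygon agrees; density of classical weights together with entireness then extends the equality to every $w_\star\in\fm_{\CC_p}$. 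The one genuine exception is the slope-zero part --- when $\bar\rho$ is split the ordinary subspace contributes zeros invisible to the ghost series, which is why that case is set aside --- whereas in the non-split case of \cref{mainthm} the ordinary part comes from a single Eisenstein-type line and is matched directly.

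The hard part will be this normalization step, not the hypothesis check: keeping the dilation factor $m(\bar r)$, the indexing of the ghost zeros, and the slope-zero bookkeeping exactly consistent with the conventions of \cite{liu2023slopes} is where essentially all the effort goes. The deep analytic input --- a cofreeness theorem for the patched modules of overconvergent forms, proved via Taylor--Wiles patching, together with the halo estimates near the boundary of weight space --- is already supplied by \cite{liu2023slopes} and will be used as a black box.
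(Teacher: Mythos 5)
Your high-level plan matches the paper's, but your argument sketch contains a genuine gap. The paper does not prove this statement itself: it is Theorem 8.7 of \cite{liu2023slopes}, restated in the body as Theorem~\ref{ghostthm}, and specialized to classical modular forms via the completed-cohomology remark in Appendix B (citing \cite{interpol}). Citing \cite{liu2023slopes} and matching hypotheses and normalization, which you propose, is exactly what the paper does.

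However, the density argument you interpose would not work and is not needed. You propose to match the Newton polygons at classical weights $w_k$ via classicality, $p$-stabilizations and Atkin--Lehner, and then ``propagate to all $w_\star\in\fm_{\CC_p}$ by density and entireness.'' That step fails for two reasons. First, $w_\star\mapsto\NP(f(w_\star,-))$ is not continuous: the ghost series is engineered so that $g_n(w_k)=0$ for infinitely many pairs $(n,k)$, and Newton polygons jump across $p$-adic discs, so equality on a dense set of weights does not force equality everywhere. Second, classicality only controls a finite initial segment of $\NP(C(w_k,-))$ (the slopes below $k-1$), whereas the theorem concerns the entire polygon at every $w_\star\in\fm_{\CC_p}$; the passage from classical slopes to the full overconvergent polygon is precisely the hard content of \cite{liu2023slopes} and cannot be re-derived from dimension formulas and Atkin--Lehner duality. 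Since you also invoke \cite{liu2023slopes} as a black box, the density sketch is dispensable, but as written it gives the false impression that the full statement could be reconstructed from classical-weight data. Two smaller inaccuracies: the Jacquet--Langlands remark is irrelevant here, since \cite{liu2023slopes} already works directly with $\GL_2/\QQ$ completed cohomology, and they prove the ghost conjecture for the full range $a\in\{1,\ldots,p-4\}$, not only $\{2,\ldots,p-5\}$; the narrower range in Theorem~\ref{mainthm} comes from this paper's slope algorithm, not from the ghost theorem.
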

We write the definition of the ghost series here for completeness, but we will define them in the more abstract setting later. The formulation in the introduction is only for the interest of classical modular forms, and simplicity.
    \begin{definition}[Definition of the ghost series]\label{ghostseries}
        Assume that $\bar r|_{I_{\QQ_p}}\simeq\bar\rho$. For each $k\equiv a+2b+2 \mod p-1$ and $k\geq 2$, define 
            $$d_k^{ur}:=\frac{1}{m(\bar r)}\dim S_k(\Gamma_0(N))_{\bar r},\ d_k^{Iw}=\frac{1}{m(\bar r)}\dim S_k(\Gamma_0(Np))_{\bar r}.$$ 
        Then we have
            $$g_n(w)=\prod_{k\equiv a+2b+2\mod p-1}(w-w_k)^{m_n(k)}$$
        where $$m_n(k)=\begin{cases}
                \min(n-d_k^{ur},d_k^{Iw}-d_k^{ur}-n)&\text{if}d_k^{ur}<n<d_k^{Iw}-d_k^{ur}\\0&\text{otherwise}.
            \end{cases}$$
             Then we define the ghost series as $G_{\bar\rho}(w,t)=1+\sum_{n\geq1}g_n(w)t^n\in\ZZ_p[w][[t]]$.
    \end{definition}
The ghost series depends on the dimension of the $\bar r$-components of the space of modular forms. We will recall formulas of these in \cref{properties} along with other parts of the ghost conjecture. In \cref{section3}, we prove various properties of the Newton polygon of the Ghost series, following \cite{liu2023slopes}. In \cref{varalg}, we state the variant of the Slope conjecture of \cite{buzconj} which is the main theorem of this paper, which we prove in \cref{mainproof}.
\subsection{Acknowledgment} This work was done while the author was at Imperial College London, under the supervision of Professor Toby Gee, funded by the MIT-MISTI UK program. The author would like to thank professor Toby Gee for suggesting the project, for the guidance and support throughout the project.
\section{Recollections from the Ghost conjecture}\label{properties}
\subsection{Notations}
We will recall the most recent form of the ghost conjecture proven in \cite{liu2023slopes}. First we start with some notations. Let $p\geq5 $ be an odd prime and fix an isomorphism $\bar\QQ_p\simeq\CC$. Let $E/\QQ_p$ be a finite extension and $\cO$ and $\FF$ be its ring of integers and residue field. Let $\bar r:\Gal_\QQ\to\GL_2(\FF)$ be an absolutely irreducible representation that satisfies 
\begin{equation}\label{galois}
    \bar r|_{I_{\QQ_p}}\simeq\begin{pmatrix}
        \omega_1^{a+b+1} & *\neq 0\\
        0 & \omega_1^b
    \end{pmatrix}
\end{equation}with $a\in\{1,\cdots,p-4\}$ and $b\in\{0,\cdots,p-2\}$. 
We first define the notion of a Newton polygon.

    \begin{definition}\label{NP}
        Let $f(t)=\sum_{n\geq0}a_nt^n\in\cO[[t]]$. Then, we define the Newton polygon $\NP(f)$ of $f$ as the convex polygon (possibly infinite) given by taking the lower convex hull of the points $(n,v_p(a_n))$ for $n\in\ZZ_{\geq0}$ where $v_p$ denotes the $p$-adic valuation. We also define to stretch a newton polygon of $f$ by a factor of $m$ by taking $\NP(\sum_{n\geq 0}a_n^mt^{nm})$.
    \end{definition}
    
We recall notations from \cite{liu2023slopes} where they prove various results on abstract modular forms which are crucial to our result. We work in a more abstract setup and will specialize to our case later. We first recall the following subgroups of $\GL_2(\QQ_p)$:
    $$K_p:=\GL_2(\ZZ_p)\supset Iw_p:=\begin{pmatrix}\ZZ_p^\times&\ZZ_p\\p\ZZ_p&\ZZ_p^\times\end{pmatrix}\supset Iw_{p,1}:=\begin{pmatrix}1+p\ZZ_p&\ZZ_p\\p\ZZ_p&1+p\ZZ_p\end{pmatrix}.$$
Fix $\bar\rho$ a reducible, nonsplit, and generic residual representation:

    \begin{equation}
        \bar\rho\simeq\begin{pmatrix}\omega_1^{a+b+1}&*\neq0\\0&\omega_1^b\end{pmatrix}\text{ for }1\leq a\leq p-4 \text{ and }0\leq b\leq p-2.
    \end{equation}

We point to Section~2 of \cite{liu2023slopes} for specific notations and definitions of abstract modular forms. Let $\omega\colon\FF^\times_p\to\cO^\times$ be the Teichm\"uller lift. A character $\epsilon\colon(\FF_p^\times)^2\to\cO^\times$ is called \emph{relevant to $\bar\rho$} if it is the form $\epsilon=\omega^{-s_\epsilon+b}\times\omega^{a-s_\epsilon+b}$ for some $s_\epsilon\in\{0,\ldots,p-2\}$. We will follow the notion of  projective augmented $\cO[[K_p]]$-module as in Definition 2.2. of \cite{liu2023slopes}. 

    \begin{definition}\cite[Definition 2.2.]{liu2023slopes}
        Define $\tilde H$ to be a projective augmented $\cO[[K_p]]$-module of $\bar\rho$ type and multiplicity $m(\tilde H)$ if $\tilde H$ is a finitely generated right projective $\cO[[K_p]]$-module whose right $K_p$ action extends to a right continuous $\GL_2(\QQ_p)$ action and moreover $\overline H=\tilde H/(\varpi,I_{1+pM_2(\ZZ_p)})$ is isomorphic to a direct sum of $m(\tilde H)$ copies of $\Proj_{a,b}$ as a right $\FF[\GL_2(\FF_p)]$-module (See Appendix~A of \cite{liu2022local} for a detailed definition of $\Proj_{a,b}$). We say that $\tilde H$ is \emph{primitive} if $m(\tilde H)=1$.
            
    \end{definition}
Let $\tilde H$ be a projective augmented module, then as in \cite{liu2023slopes} we can define the space of modular forms as follows.

    \begin{definition}
        We define the space of $p$-adic modular forms and overconvergent modular forms by
        \begin{equation}
    \begin{split}
        & S_{p-adic}^{(\epsilon)}=S_{\tilde{H},p-adic}^{(\epsilon)}:=\Hom_{\cO[Iw_p]}(\tilde H,\cC^0(\ZZ_p;\cO[[w]]^{(\epsilon)}))\\
        \text{and}\\
        & S^{\dagger,(\epsilon)}=S_{\tilde{H}}^{\dagger,(\epsilon)}:=\Hom_{\cO[Iw_p]}(\tilde H,\cC^0(\ZZ_p;\cO\langle w/p\rangle^{(\epsilon)}\langle z\rangle)).
    \end{split}
\end{equation}
    Where the action of $Iw_p$ on $\cC^0(\ZZ_p,\cO[[w]]^{(\epsilon)})$ is given by defining
        \begin{equation}
            \begin{split}
                & \chi_{univ}^{(\epsilon)}:\FF_p^\times\times\ZZ_p^\times\to\cO[[w]]^{(\epsilon),\times}\\
                &(\bar\alpha,\delta)\mapsto\epsilon(\bar\alpha,\bar\delta)\cdot(1+w)^{\log(\delta/\omega(\bar\delta))/p}
            \end{split}
        \end{equation}
    and identifying 
        \begin{equation}
            \Ind_{B^{op}(\ZZ_p)}^{Iw_p}\chi_{univ}^{(\epsilon)}\simeq\cC^0(\ZZ_p;\cO[[w]]^{(\epsilon)}).        \end{equation}
    This also gives an action on $\cO\langle w/p\rangle^{(\epsilon)}\langle z\rangle$ viewing power series as a continuous function.
    \end{definition}

Here we can extend the action of $Iw_p$ on $\cC^0(\ZZ_p;\cO[[w]]^{(\epsilon)})$ and $\cO\langle w/p\rangle^{(\epsilon)}\langle z\rangle$ to 
\begin{equation*}
    M_1=\bigg\{\begin{pmatrix}\alpha&\beta\\ \gamma&\delta\end{pmatrix}\in M_2(\ZZ_p);p|\gamma,p\nmid\delta,\alpha\delta-\beta\gamma\neq0\bigg\}
\end{equation*} 
by\footnote{We indicate $g\in M_1$
acting on $h(z)$ by $h|_g(z)$.}
\begin{equation*}
    h|_{\begin{pmatrix}\alpha&\beta\\ \gamma&\delta\end{pmatrix}}(z)=\epsilon(\bar d/\bar\delta)\cdot(1+w)^{\log((\gamma z+\delta)/\omega(\bar\delta))/p}\cdot h\bigg(\frac{\alpha z+\beta}{\gamma z+\delta}\bigg).
\end{equation*}
Using this, we can define the $U_p$ operator as follows. Recall the decomposition
\[Iw_p\begin{pmatrix}p^{-1}&0\\0&1\end{pmatrix}Iw_p=\coprod_{j=0}^{p-1}v_jIw_p\]where $v_j=\begin{pmatrix}p^{-1}&0\\j&1\end{pmatrix}$. The $U_p$ operator sends $\varphi\in S^{\dagger,(\epsilon)}$ to 
    $$U_p(\varphi)(x)=\sum_{j=0}^{p-1}\varphi(xv_j)|_{v_j^{-1}}.$$
Thus, we can define the characteristic power series:

    \begin{equation}
        C^{(\epsilon)}(w,t)=C_{\tilde H}^{(\epsilon)}(w,t):=\det(1-U_pt|S^{\dagger,(\epsilon)})=\sum_{n\geq0}c_n^{(\epsilon)}(w)t^n\in\Lambda[[t]]=\cO[[w,t]].
    \end{equation}
    
Now, if we let $\psi=\epsilon\cdot(1\times\omega{^{2-k}})$, we have $S_k^\dagger(\psi)=S^{\dagger,(\epsilon)}\otimes_{\cO\langle w/p\rangle,w\mapsto w_k}\cO$ carrying compatible $U_p$ actions. Moreover, the characteristic power series of the $U_p$ action is $C^{(\epsilon)}(w_k,t)$. \\
For each $k\geq 2$, setting $\psi$ as above, we have an inclusion $$\cO[z]^{\deg\leq k-2}\otimes\psi\subset\cO\langle w/p\rangle^{(\epsilon)}\langle z\rangle\otimes_{\cO\langle w/p\rangle,w\mapsto w_k}\cO,$$ and via this, we define the space of abstract classical forms of weight $k$ and character $\psi$ to be
    $$S_k^{Iw}(\psi)=\Hom_{\cO[Iw_p]}(\tilde H,\cO[z]^{\leq k-2}\otimes\psi)\subset S_k^\dagger(\psi).$$
When $\tilde H$ is primitive, i.e. $m(\tilde H)=1$, we define $$d_k^{Iw}(\psi)=\mathrm{rank}_\cO S_k^{Iw}(\psi).$$
Now, we fix a relevant character $\epsilon,$ and define $k_\epsilon=2+\{a+2s_\epsilon\}\in\{2,\ldots,p\}$ where $\{-\}$ denotes the remainder modulo $ p-1$. Also for a character $\epsilon\colon(\FF_p^\times)^2\to\cO^\times$, define $\epsilon_1$ to be the projection on to the first factor $\FF_p^\times$, and for any character $\chi\colon\FF_p^\times\to\cO^\times$, let $\tilde\chi=\chi\times\chi\colon(\FF_p^\times)^2\to\cO^\times$ a character of $(\FF_p^\times)^2$. When $\psi$ is of the form $\psi=\tilde\epsilon_1=\epsilon_1\times\epsilon_1$, and $k$ satisfies $\tilde\epsilon_1=\epsilon\cdot(1\times\omega^{2-k})=\omega^{-s_\epsilon+b}\times\omega^{a+s_\epsilon+b+2-k}$, we must have $k\equiv k_\epsilon\mod p-1$. In such case, $\cO[z]^{\leq k-2}\otimes\epsilon_1\circ\det$ has a natural action of $M_1$, and hence we can define
    $S_k^{ur}(\epsilon_1)=\Hom_{\cO[K_p]}(\tilde H,\cO[z]^{\leq k-2}\otimes\epsilon_1\circ\det).$
For each relevant character $\epsilon=\omega^{-s_\epsilon+b}\times\omega^{a+s_\epsilon+b}$, we set $\tilde\epsilon_1=\omega^{-s_\epsilon+b}\times\omega^{-s_\epsilon+b}$. Assuming $\tilde H$ is primitve, $d_k^{ur}(\epsilon_1):=\mathrm{rank}_\cO S_k^{ur}(\epsilon_1)$. When $\tilde H$ is not primitive, we define the dimension functions by $d_k^{Iw}(\psi)=\frac{1}{m(\tilde H)}\mathrm{rank} S_{\tilde H,k}^{Iw}(\psi)$ and $d_k^{ur}(\epsilon_1)=\frac{1}{m(\tilde H)}\mathrm{rank} S_{\tilde H,k}^{ur}(\epsilon_1)$. 
Following the notations of \cite{liu2023slopes}, we define the ghost series of type $\bar\rho$: 

\begin{definition}
    The \emph{ghost series} associated to $\bar r$ with character $\epsilon$ is 
        \begin{equation}
            G^{(\epsilon)}(w,t)=G_{\bar\rho}^{(\epsilon)}(w,t)=1+\sum_{n=1}^\infty g_n^{(\epsilon)}(w)t^n\in\cO[[w,t]],
        \end{equation}
        where 
\begin{equation}
    g_n^{(\epsilon)}(w)=\prod_{k\geq 2,k\equiv k_\epsilon\mod p-1}(w-w_k)^{m_n^{(\epsilon)}(k)}\in\cO[w]
\end{equation} 
with $m_n^{(\epsilon)}(k)$ given by
    \begin{equation}
        m_n^{(\epsilon)}(k)=\begin{cases}
        \min\{n-d_k^{ur}(\epsilon_1),d_k^{Iw}(\tilde\epsilon_1)-d_k^{ur}(\epsilon_1)-n\}&\text{if }d_k^{ur}(\epsilon_1)<n<d_k^{Iw}(\tilde\epsilon_1)-d_k^{ur}(\epsilon_1)\\
        0&\text{otherwise}.
    \end{cases}
    \end{equation}
\end{definition}

\subsection{Recollections of properties of the ghost series}
Given the setup above, we have the following propositions (Proposition 2.16 of \cite{liu2023slopes}). We would like to point out that from now on, we will assume that $p\geq 11$ since later we will identify the Newton polygons of the ghost series and the characteristic power series using Theorem~\ref{ghostthm} which holds for $p\geq 11$.\\

To make it easier to visualize, we set up some notation and will write the above proposition in the new notation. Let $v^{(\epsilon),\dagger}_k[n]$ be the $n$th slope of the Newton polygon $\NP(G^{(\epsilon)}_{\bar \rho}(w_k,-))$ where $\bar r$ is of type $\bar\rho$. We will write denote by $v^{(\epsilon),Iw}_k[n],v^{(\epsilon)}_k[n]$ the slopes sequences for different spaces of modular forms respectively where this notation is to resemble the notation of \cite{buzconj}. We will drop the $\bar r$ in the notation as we will work with a fixed $\bar r$ until the appendix. Then we have the following.

    \begin{proposition}\label{ghostprops}
        Let $\epsilon$ be a relevant character. Fix $k_0\geq 2$, write $$g_{n,\hat k}^{(\epsilon)}(w):=g_n^{(\epsilon)}(w)/(w-w_k)^{m_n^{(\epsilon)}(k)}.$$ We let $d:=d_{k_0}^{Iw}(\epsilon\cdot(1\times\omega^{2-k_0}))$ in this proposition.
            \begin{enumerate}
                \item (Compatibility with theta maps) Put $\epsilon'=\epsilon\cdot(\omega^{k_0-1}\times\omega^{1-k_0})$ with $s_{\epsilon'}=\{s_\epsilon+1-k_0\}$. Then for every $l\geq 1$, 
                    \begin{equation}\label{theta}
                        v^{(\epsilon),\dagger}_{k_0}[d+l]=v^{(\epsilon'),\dagger}_{2-k_0}[l]+k_0-1.
                    \end{equation}
                \item (Compatibility with Atkin-Lehner involutions) Assume that $k\not\equiv k_\epsilon\mod p-1$. Put $\epsilon''=\omega^{-s_{\epsilon''}}\times\omega^{a+s_{\epsilon''}}$ with $s_{\epsilon''}:=\{k_0-2-a-s_\epsilon\}$. Then for every $l\in\{1,\ldots,d\}$, 
                    \begin{equation}\label{atkinlehner}
                        v^{(\epsilon),\dagger}_{k_0}[l]+v^{(\epsilon''),\dagger}_{k_0}[d-l+1]=k_0-1.
                    \end{equation}
                \item (Compatibility with $p$-stabilizations) Assume that $k_0\equiv k_\epsilon\mod p-1$. Then for every $l\in\{1,\ldots,d_{k_0}^{ur}(\epsilon_1)\}$, 
                    \begin{equation}\label{pstab}
                        v^{(\epsilon),\dagger}_{k_0}[l]+v^{(\epsilon),\dagger}_{k_0}[d-l+1]=k_0-1.
                    \end{equation}
                \item (Ghost duality) Assume $k_0\equiv k_\epsilon\mod p-1$. Then for each $l=0,\ldots,\frac{1}{2}d_{k_0}^{new}(\epsilon_1)-1$, 
                    \begin{equation}\label{ghostdual}
                        v_p(g_{d_{k_0}^{Iw}(\tilde\epsilon_1)-d_{k_0}^{ur}(\epsilon_1)-l,\hat k_0}(w_{k_0}))-v_p(g_{d_{k_0}^{ur}+l(\epsilon_1)-l,\hat k_0}(w_{k_0}))=(k_0-2)\cdot(\frac{1}{2}d_{k_0}^{new}(\tilde\epsilon_1)-l).
                    \end{equation}
            \end{enumerate}
    \end{proposition}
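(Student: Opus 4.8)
The four statements are the combinatorial shadow of three classical operations on spaces of modular forms: the Coleman theta map $\theta^{k_0-1}$ (which raises $U_p$-slopes by $k_0-1$), the Atkin--Lehner involution at $p$ (which sends a slope $v$ to $k_0-1-v$ and the character $\epsilon$ to $\epsilon''$), and the two $p$-stabilizations of a level-$N$ form (whose slopes sum to $k_0-1$). I would follow the combinatorial proof of Proposition~2.16 in \cite{liu2023slopes}, reducing everything to identities for the multiplicities $m_n^{(\epsilon)}(k)$. The starting point is that for a classical weight one has
\[
v_p\bigl(g_{n,\hat k_0}^{(\epsilon)}(w_{k_0})\bigr)=\sum_{k\ne k_0,\ k\equiv k_\epsilon}m_n^{(\epsilon)}(k)\bigl(1+v_p(k-k_0)\bigr),
\]
because $v_p(w_{k_0}-w_k)=1+v_p(k-k_0)$ for $k\ne k_0$ (from the explicit $w_k$), and $(w-w_{k_0})^{m_n^{(\epsilon)}(k_0)}$ is exactly the factor removed in $g_{n,\hat k_0}^{(\epsilon)}$; moreover $m_n^{(\epsilon)}(k_0)=0$ — so $g_n^{(\epsilon)}(w_{k_0})=g_{n,\hat k_0}^{(\epsilon)}(w_{k_0})\ne0$ — precisely when $n$ lies outside the ``$p$-new'' range $\bigl(d_{k_0}^{ur}(\epsilon_1),\,d_{k_0}^{Iw}(\tilde\epsilon_1)-d_{k_0}^{ur}(\epsilon_1)\bigr)$. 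The slopes $v^{(\epsilon),\dagger}_{k_0}[n]$ are the successive slopes of the lower convex hull of the points $(n,v_p(g_n^{(\epsilon)}(w_{k_0})))$.

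I then need two inputs, both supplied by Section~2 of \cite{liu2023slopes}. First, the explicit formulas for $d_k^{ur}(\epsilon_1)$, $d_k^{Iw}(\tilde\epsilon_1)$ and $d_k^{new}=d_k^{Iw}-2d_k^{ur}$ as piecewise-linear functions of $k\equiv k_\epsilon\bmod p-1$, together with the reflection identities they satisfy under the weight reflections and under the character twists $\epsilon\mapsto\epsilon'=\epsilon\cdot(\omega^{k_0-1}\times\omega^{1-k_0})$ and $\epsilon\mapsto\epsilon''$. Second, the structural description of $\NP(G^{(\epsilon)}(w_{k_0},-))$ near $w_{k_0}$: the $p$-old parts $0\le n\le d_{k_0}^{ur}(\epsilon_1)$ and $d_{k_0}^{Iw}(\tilde\epsilon_1)-d_{k_0}^{ur}(\epsilon_1)\le n\le d_{k_0}^{Iw}(\tilde\epsilon_1)$ are strictly convex (a vertex at every lattice point) while the new range contributes a single long edge; in particular $v^{(\epsilon),\dagger}_{k_0}[n]=v_p(g_n^{(\epsilon)}(w_{k_0}))-v_p(g_{n-1}^{(\epsilon)}(w_{k_0}))$ on the old ranges.

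With these in hand the four parts follow. I would prove \eqref{ghostdual} first, as the ``raw'' statement: set $F(n):=v_p(g_{d-n,\hat k_0}^{(\epsilon)}(w_{k_0}))-v_p(g_{n,\hat k_0}^{(\epsilon)}(w_{k_0}))$ with $d=d_{k_0}^{Iw}(\tilde\epsilon_1)$, plug in the displayed sum and the shape $m_n^{(\epsilon)}(k)=\min(n-d_k^{ur},\,d_k^{Iw}-d_k^{ur}-n)$, and use the reflection identity for the $d_k^{new}$'s; the sum telescopes to $(k_0-2)\bigl(\tfrac12 d_{k_0}^{new}(\tilde\epsilon_1)-l\bigr)$, and one sees along the way that $F$ is affine of slope $-(k_0-2)$ on the new range and of slope $-(k_0-1)$ on the old range. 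Statements \eqref{pstab} and \eqref{atkinlehner} (the latter involving the extra $\epsilon\leftrightarrow\epsilon''$ twist, which is where the hypothesis $k_0\not\equiv k_\epsilon$ enters) then come from the analogue of $F$ on the old range together with strict convexity: taking consecutive differences of ``$F$ is affine of slope $-(k_0-1)$'' turns into $v^{(\epsilon),\dagger}_{k_0}[l]+v^{(\epsilon),\dagger}_{k_0}[d-l+1]=k_0-1$ for $l\le d_{k_0}^{ur}(\epsilon_1)$. For \eqref{theta} I would instead prove the single valuation identity $v_p(g_{d+l}^{(\epsilon)}(w_{k_0}))=v_p(g_d^{(\epsilon)}(w_{k_0}))+l(k_0-1)+v_p(g_l^{(\epsilon')}(w_{2-k_0}))$ for all $l\ge0$: it comes from a bijection $\{k\equiv k_\epsilon\}\leftrightarrow\{k'\equiv k_{\epsilon'}\}$ attached to the theta twist under which $m_{d+l}^{(\epsilon)}(k)=m_l^{(\epsilon')}(k')$ and $v_p(w_{k_0}-w_k)=v_p(w_{2-k_0}-w_{k'})$, the index shift by $d$ supplying the $l(k_0-1)$ term; since the affine map $l\mapsto(d+l,\,v_p(g_d^{(\epsilon)}(w_{k_0}))+l(k_0-1)+(\cdot))$ commutes with taking lower convex hulls, this yields \eqref{theta} slope-by-slope with no convexity input.

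The step I expect to be the main obstacle is the second input above: \eqref{atkinlehner} and \eqref{pstab} assert identities of \emph{individual} slopes, whereas a priori only the lower convex hull of the ghost polygon is under control, so one genuinely has to prove that the $p$-old parts of $\NP(G^{(\epsilon)}(w_{k_0},-))$ carry a vertex at every lattice point (equivalently, that $n\mapsto v_p(g_n^{(\epsilon)}(w_{k_0}))$ is strictly convex there). This near-$w_{k_0}$ analysis of the ghost polygon is where the genericity of $\bar\rho$ and the bound $p\ge11$ really enter. Everything else is bookkeeping: tracking the characters $\epsilon,\epsilon',\epsilon'',\tilde\epsilon_1$ and the parameters $s_\epsilon,k_\epsilon$ modulo $p-1$, separating the cases $k_0\equiv k_\epsilon$ and $k_0\not\equiv k_\epsilon$, and handling small-weight boundary cases — laborious but routine once the dimension formulas and the ghost-polygon structure are available.
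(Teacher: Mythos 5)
The paper does not prove this proposition; it is imported verbatim from Proposition~2.16 of \cite{liu2023slopes} and simply cited, so there is no ``paper's own proof'' to compare against. Your sketch is instead an account of how the cited proof actually goes, and it is largely on target: the starting formula $v_p\bigl(g_{n,\hat k_0}^{(\epsilon)}(w_{k_0})\bigr)=\sum_{k\neq k_0}m_n^{(\epsilon)}(k)\bigl(1+v_p(k-k_0)\bigr)$ is correct (using $w_k=\exp(p(k-2))$), treating the ghost duality \eqref{ghostdual} as the raw combinatorial statement and deriving \eqref{pstab}, \eqref{atkinlehner} from it by reflection is the right organizing principle, and the theta compatibility \eqref{theta} does come from a weight bijection that shifts indices by $d$.

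One point of emphasis is slightly off. You frame the main obstacle to \eqref{pstab} and \eqref{atkinlehner} as needing ``a vertex at every lattice point'' on the old range, so that individual slopes are pinned down. That is more than is actually required: once one knows that $F(n):=v_p(g_{d-n,\hat k_0}^{(\epsilon)}(w_{k_0}))-v_p(g_{n,\hat k_0}^{(\epsilon)}(w_{k_0}))$ is affine of slope $-(k_0-1)$ on the old range, the map $n\mapsto d-n$ composed with the corresponding affine shift carries the point set $\{(n,v_p(g_n))\}_{n\le d_{k_0}^{ur}}$ to the mirrored set, and lower convex hulls are equivariant under such affine involutions; the slope-pairing $v^{(\epsilon),\dagger}_{k_0}[l]+v^{(\epsilon),\dagger}_{k_0}[d-l+1]=k_0-1$ then follows for $l\le d_{k_0}^{ur}(\epsilon_1)$ as soon as the single boundary point at $n=d_{k_0}^{ur}(\epsilon_1)$ is known to be a vertex (so that the Newton polygon of the whole series agrees with that of the old-range sub-polygon there). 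The vertex information that genuinely has to be established is therefore weaker than full strict convexity of the old range, and it is the near-Steinberg analysis of \cite{liu2022local} (Theorem~5.19 there, restated as Theorem~\ref{NPpts} in this paper) that supplies it. Apart from that, your outline is consistent with the argument of \cite{liu2023slopes}.
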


We record dimension formulas for later use.

    \begin{proposition}[Proposition 2.12 \cite{liu2023slopes}]\label{dimform}
        Let $\tilde H$ be a primitive $\cO[[K_p]]$-projective augmented module of type $\bar\rho$ and let $\epsilon=\omega^{-s_\epsilon+b}\times\omega^{a+b+s_\epsilon}$ be a relevant character of $(\FF_p^\times)^2$.
            \begin{enumerate}
                \item We have
                    \begin{equation}
                        d_k^{Iw}(\epsilon\cdot(1\times\omega^{2-k}))=\bigg\lfloor \frac{k-2-s_{\epsilon}}{p-1}\bigg\rfloor+\bigg\lfloor\frac{k-2-\{a+s_\epsilon\}}{p-1}\bigg\rfloor+2
                    \end{equation}
                \item Set $\delta_\epsilon:=\bigl\lfloor\frac{s_\epsilon+\{a+s_\epsilon\}}{p-1}\bigr\rfloor$. In particular when $k=k_\epsilon+(p-1)k_\bullet$, for $k_\bullet\in\ZZ_{\geq0}$, we have 
                    $$d_k^{Iw}=2k_\bullet+2-2\delta_\epsilon$$
                \item Define two integers $t_1,t_2\in\ZZ$ as follows.
                    \begin{itemize}
                        \item If $a+s_\epsilon<p-1$, let $t_1=s_\epsilon+\delta_\epsilon$ and $t_2=a+s_\epsilon+\delta_\epsilon+2$
                        \item If $a+s_\epsilon\geq p-1$, let $t_1=\{a+s_\epsilon\}+\delta_\epsilon+1$ and  $t_2=s_\epsilon+\delta_\epsilon+1$.
                    \end{itemize}
                    Then for $k=k_0+(p-1)k_\bullet$, 
                        \begin{equation}
                            d_k^{ur}=\bigg\lfloor\frac{k_\bullet-t_1}{p+1}\bigg\rfloor+\bigg\lfloor\frac{k_\bullet-t_2}{p+1}\bigg\rfloor+2.
                        \end{equation}
            \end{enumerate}
    \end{proposition}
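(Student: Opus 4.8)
This is \cite[Proposition~2.12]{liu2023slopes}; the plan is to recall its proof. The first step is a reduction to characteristic $p$. As $\tilde H$ is projective over $\cO[[K_p]]$ it is also projective over $\cO[[Iw_p]]$, so $\Hom_{\cO[[Iw_p]]}(\tilde H,-)$ is exact and commutes with reduction modulo $\varpi$; since $\cO[z]^{\leq k-2}$ is finite free over $\cO$, the space $S_k^{Iw}(\psi)$ is $\cO$-free and $d_k^{Iw}(\psi)=\dim_\FF\Hom_{\FF[[Iw_p]]}(\tilde H/\varpi,\overline{\cO[z]^{\leq k-2}}\otimes\bar\psi)$, while at integral weight the $Iw_p$-action on $\cO[z]^{\leq k-2}$ reduces modulo $\varpi$ to an action through the Borel $B(\FF_p)$ (respectively, the $K_p$-action on $\cO[z]^{\leq k-2}\otimes(\epsilon_1\circ\det)$ reduces to one through $\GL_2(\FF_p)$). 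Using $\overline H\cong\Proj_{a,b}$ in the primitive case (the general case following from the $m(\tilde H)^{-1}$ normalization) and the identification $\cO[z]^{\leq k-2}\cong\Sym^{k-2}\FF^2$ of $\GL_2$-representations, one is reduced, as in \cite{liu2023slopes}, to
\[
d_k^{Iw}(\psi)=\dim_\FF\Hom_{\FF[B(\FF_p)]}\bigl(\Proj_{a,b},\ \overline{\Sym^{k-2}\FF^2}\otimes\bar\psi\bigr),
\]
\[
d_k^{ur}(\epsilon_1)=\dim_\FF\Hom_{\FF[\GL_2(\FF_p)]}\bigl(\Proj_{a,b},\ \overline{\Sym^{k-2}\FF^2}\otimes(\bar\epsilon_1\circ\det)\bigr).
\]

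Next I would unwind these $\Hom$-spaces using the structure of $\Proj_{a,b}$. Since it is projective over $\FF[\GL_2(\FF_p)]$, $\dim\Hom_{\FF[\GL_2(\FF_p)]}(\Proj_{a,b},V)$ is the multiplicity in $V$ of the Jordan--Hölder constituents in the cosocle of $\Proj_{a,b}$, which for this (generic, reducible, nonsplit) $\bar\rho$ is governed by its two Serre weights; restricted to $B(\FF_p)$, $\Proj_{a,b}$ decomposes as a sum of projective indecomposable $\FF[B(\FF_p)]$-modules $P_\chi$, the projective covers of the characters $\chi$ of $T(\FF_p)$, so that $\dim\Hom_{\FF[B(\FF_p)]}(\Proj_{a,b},V)=\sum_\chi a_\chi\,[\,V|_{T(\FF_p)}:\chi\,]$ for an explicit multiset $\{a_\chi\}$ in which exactly two characters are relevant to $\bar\rho$. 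The Iwahori formula (1) is then elementary weight-counting: the $T(\FF_p)$-weights of $\overline{\Sym^{k-2}\FF^2}$ are $\bar\alpha^{j}\bar\delta^{k-2-j}$ for $0\le j\le k-2$, so after the twist by $\bar\psi$ the multiplicity of one relevant character is the number of $j\in\{0,\dots,k-2\}$ in a fixed residue class mod $p-1$, namely $\lfloor\frac{k-2-s_\epsilon}{p-1}\rfloor+1$, and for the other it is $\lfloor\frac{k-2-\{a+s_\epsilon\}}{p-1}\rfloor+1$; adding the two gives (1). Part (2) is then pure floor-function bookkeeping: put $k=k_\epsilon+(p-1)k_\bullet$, substitute $k_\epsilon=2+\{a+2s_\epsilon\}$, and track the single carry, which occurs precisely when $s_\epsilon+\{a+s_\epsilon\}\ge p-1$ and is recorded by $\delta_\epsilon$.

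The \emph{substantial part} is (3), which needs the full Jordan--Hölder description of $\overline{\Sym^{k-2}\FF^2}$ as a $\GL_2(\FF_p)$-module: writing $k-2$ in base $p$ and invoking the Steinberg tensor-product (Glover) decomposition of symmetric powers of the standard representation in large degree, one must extract the multiplicity with which each of the two Serre weights of $\bar\rho$ (twisted by $\bar\epsilon_1\circ\det$) occurs in it. Writing $k=k_0+(p-1)k_\bullet$, this multiplicity is eventually periodic in $k_\bullet$ with period $p+1$ — the source of the $p+1$ in the denominators of (3) — and the two relevant constituents first enter at thresholds $t_1,t_2$, with the dichotomy $a+s_\epsilon<p-1$ versus $a+s_\epsilon\ge p-1$ reflecting where in the digit pattern these weights sit; this yields $\lfloor\frac{k_\bullet-t_1}{p+1}\rfloor+1$ and $\lfloor\frac{k_\bullet-t_2}{p+1}\rfloor+1$ and hence (3). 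I expect this last combinatorial analysis of symmetric powers over $\GL_2(\FF_p)$ — rather than anything about $\tilde H$ itself — to be the main obstacle; the rest is formal reduction or elementary weight-counting.
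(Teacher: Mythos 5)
The paper itself does not prove this proposition; it is cited verbatim as \cite[Proposition~2.12]{liu2023slopes}, so there is no ``paper's own proof'' to compare against, and your task was effectively to reconstruct the argument in that reference. Your reconstruction is essentially correct and follows the same strategy as \cite{liu2023slopes}: reduce modulo $\varpi$ and modulo the principal congruence subgroup using projectivity of $\tilde H$ over $\cO[[K_p]]$ (noting that $\FF[z]^{\leq k-2}\cong\Sym^{k-2}\FF_p^2$ genuinely factors through $\GL_2(\FF_p)$, so one lands in $\Hom_{\FF[B(\FF_p)]}(\Proj_{a,b},-)$ resp.\ $\Hom_{\FF[\GL_2(\FF_p)]}(\Proj_{a,b},-)$), then count $T(\FF_p)$-weights for the Iwahori formula and Jordan--Hölder constituents of $\Sym^{k-2}\FF_p^2$ for the spherical one.

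One small slip: in the reducible \emph{nonsplit} case that this paper works with, $\bar\rho$ has a single Serre weight $\sigma_{a,b}$ and $\Proj_{a,b}$ is the projective cover of that single irreducible (of $\FF$-dimension $2p$), so its restriction to $B(\FF_p)$ is $P_{\chi_1}\oplus P_{\chi_2}$ with cosocle exactly the two relevant torus characters $\omega^{a+b}\times\omega^b$ and $\omega^b\times\omega^{a+b}$. Your phrase ``governed by its two Serre weights'' is misleading here; it is ``two torus characters coming from the one Serre weight.'' (The two-Serre-weight and four-character bookkeeping is what occurs in the \emph{split} case, which is excluded.) The rest of the argument is unaffected: with the correct two characters, the Iwahori count gives exactly the two floor terms plus $2$ in part (1), part (2) is the stated floor arithmetic, and part (3) reduces to the mod-$p$ decomposition of $\Sym^{k-2}\FF_p^2$ over $\GL_2(\FF_p)$ with the well-known period $p+1$ in $k_\bullet$.
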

Finally, the main theorem of \cite{liu2023slopes} is the following.
    \begin{theorem}[Theorem 8.7 of \cite{liu2023slopes} for $\bar\rho$ non-split]\label{ghostthm}
        Assume that we have $p,\tilde H,\epsilon,\bar r$,$C_{\tilde H}^{(\epsilon)}(w,t)$, $G_{\bar\rho}^{(\epsilon)}(w,t)$ as above. 
        Then for every $w_\star\in\fm_{\CC_p}$, the Newton polygon $\NP(C_{\tilde H}^{(\epsilon)}(w_\star,-))$ is the same as the Newton polygon $\NP(G_{\bar \rho}^{(\epsilon)}(w_\star,-))$, stretched by $m(\tilde H)$ in the language of \cref{NP}.
    \end{theorem}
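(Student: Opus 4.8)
This is Theorem~8.7 of \cite{liu2023slopes} in the non-split case, and in the present paper it is invoked as such; what follows is only a sketch of the argument that proves it, framed as one would reconstruct it.

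\textbf{Reduction to the primitive case.} One would first reduce to $m(\tilde H)=1$. Both sides of the asserted equality are compatible with multiplicity: for a general $\tilde H$ of type $\bar\rho$ a decomposition of $\tilde H$, together with the fact that the ghost series is built only from the normalized dimensions $d_k^{ur},d_k^{Iw}$ and is therefore intrinsically primitive, shows that $\NP(C^{(\epsilon)}_{\tilde H}(w_\star,-))$ and $\NP(G^{(\epsilon)}_{\bar\rho}(w_\star,-))$ are each the corresponding primitive Newton polygon stretched by $m(\tilde H)$ in the sense of \cref{NP}. So it suffices to prove $\NP(C^{(\epsilon)}(w_\star,-))=\NP(G^{(\epsilon)}(w_\star,-))$ for primitive $\tilde H$ and all $w_\star\in\fm_{\CC_p}$.

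\textbf{The ghost as a lower bound.} Next one fixes an $\cO\langle w/p\rangle$-basis of $S^{\dagger,(\epsilon)}$ adapted to the $\theta$-, Atkin--Lehner and $p$-stabilization structures, and estimates the $p$-adic valuations of the entries of the matrix of $U_p$ in it (Hodge-type, a.k.a.\ ghost, lower bounds). This gives $v_p(c_n^{(\epsilon)}(w_\star))\ge v_p(g_n^{(\epsilon)}(w_\star))$ for every $n$ and $w_\star$, i.e.\ $\NP(C^{(\epsilon)}(w_\star,-))\ge\NP(G^{(\epsilon)}(w_\star,-))$. The point is that the exponents $m_n^{(\epsilon)}(k)$ defining $g_n^{(\epsilon)}$ are the minimal ones forced by the dimension jump from $d_k^{ur}(\epsilon_1)$ to $d_k^{Iw}(\tilde\epsilon_1)-d_k^{ur}(\epsilon_1)$ recorded in \cref{dimform}, so the ghost polygon is the extremal polygon consistent with the dimension data, and this step says $C$ cannot beat it.

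\textbf{Equality at arithmetic weights.} One then specializes at $w_\star=w_k$. The classicality (control) theorem identifies the slope-$\le k-1$ part of $\NP(C^{(\epsilon)}(w_k,-))$ with the $U_p$-slopes on the finite free module $S_k^{Iw}(\psi)$, of rank $d_k^{Iw}$; those slopes are symmetric about $(k-1)/2$ by Atkin--Lehner and have slope-$0$ multiplicity $d_k^{ur}$. On the ghost side, the vanishing orders $m_n^{(\epsilon)}(k)$ put a vertex of $\NP(G^{(\epsilon)}(w_k,-))$ at abscissa $d_k^{Iw}$ with all later slopes $\ge k-1$, while the compatibility relations of \cref{ghostprops} --- with $\theta$-maps \eqref{theta}, Atkin--Lehner involutions \eqref{atkinlehner}, $p$-stabilizations \eqref{pstab} and ghost duality \eqref{ghostdual} --- together with the same slope-$0$ count pin $\NP(G^{(\epsilon)}(w_k,-))$ down completely on slopes $\le k-1$, and then on the rest by the functional equation. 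Two symmetric polygons of the same length, with the same slope-$0$ part, one above the other by the previous step, must coincide; hence equality holds at every $w_k$.

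\textbf{Propagation, and the main obstacle.} Finally one passes from the dense set $\{w_k\}$ to an arbitrary $w_\star\in\fm_{\CC_p}$: the spectral-halo estimates give equality for $w_\star$ of sufficiently large valuation, and the combinatorial rigidity of the ghost series constrains how $\NP(C^{(\epsilon)}(w_\star,-))$ can degenerate and excludes an interior locus on which the lower bound is strict. This last point --- extracting exact slope information in the interior of weight space, where no classicality is available --- is the genuine difficulty, and is exactly what the long analysis of the ghost Newton polygon in Sections~3--8 of \cite{liu2023slopes} achieves. I do not reproduce it; in this paper Theorem~\ref{ghostthm} is used only in the form stated.
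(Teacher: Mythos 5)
You are right that \cref{ghostthm} is Theorem~8.7 of \cite{liu2023slopes} and that the present paper invokes it without proof, so there is no internal argument to compare against; I will assess your reconstruction on its own terms.

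The reduction to the primitive case is fine, and the broad three-part shape (lower bound, equality on a special set, propagation) is a defensible first guess, but the middle step, ``Equality at arithmetic weights,'' contains a genuine gap. The concluding sentence there --- that two symmetric convex polygons of the same horizontal length and the same slope-$0$ part, one lying on or above the other, must coincide --- is false. Taking $d=8$ with all slope pairs summing to $2$, the slope sequences $\bigl(0,0,\tfrac12,1,1,\tfrac32,2,2\bigr)$ and $\bigl(0,0,\tfrac35,1,1,\tfrac75,2,2\bigr)$ give two convex, symmetric polygons with the same endpoints $(0,0)$ and $(8,8)$ and the same slope-$0$ multiplicity, the second lying weakly above the first yet unequal to it. What the theorem asserts at $w_k$ is precisely the nontrivial content --- that the $U_p$-slopes on $S_k^{Iw}(\psi)$ coincide with the ghost slopes --- and this is not forced by a symmetry-plus-dominance formal argument; if it were, the ghost conjecture would be immediate at every classical weight, which it is not. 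Moreover the actual proof in \cite{liu2023slopes}, resting on the local ghost theorem of \cite{liu2022local}, does not run through classicality at arithmetic weights and then propagate: it establishes both the lower bound and the matching upper bound on $\NP(C^{(\epsilon)}(w_\star,-))$ for all $w_\star\in\fm_{\CC_p}$ simultaneously, by a long induction organized around the near-Steinberg ranges of \cref{nearSt} and \cref{NPpts}, Lagrange interpolation of the ghost coefficients, and power-basis estimates, with the halo estimate entering as the base of the induction rather than as a patch on the interior. It is entirely reasonable to leave Theorem~8.7 as a citation, as the paper does and as you yourself conclude; but the outline you offer is not a correct sketch of that proof.
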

This theorem will later fit into the proof of the main theorem in \cref{GuMaz} by directly deducing \cite[Theorem 8.10]{liu2023slopes}.

\section{Newton polygon of the Ghost series}\label{section3}

We recall some lemmas related to the vertices of the Newton polygon of the Ghost series. All of the material is from \cite{liu2022local}.

    \begin{notation}
        For any integer $k\geq 2$ and $k\equiv k_\epsilon\mod p-1$, we set 
            $$\Delta_{k,l}':=v_p(g^{(\epsilon)}_{\frac{1}{2}d_l^{Iw}+l,\hat k}(w_k))-\frac{k-2}{2}l,\text{ for }l=-\frac{1}{2}d_k^{new},\ldots,\frac{1}{2}d_k^{new}$$
        
    \end{notation}
Then the ghost duality theorem \cref{ghostdual} says $\Delta_{k,l}'=\Delta_{k,-l}'$.

    \begin{definition}
        We define $\Delta_k$ to be the convex hull of  $(l,\Delta_{k,l}')$ and denote the corresponding points $(l,\Delta_{k,l})$ to be the points lying on $\Delta_k$. 
    \end{definition}

    \begin{lemma}[Lemma 5.2 in \cite{liu2022local}]\label{Delta'bound}
        For $k=k_\epsilon+(p-1)k_\bullet$ and $l=1,\ldots,\frac{1}{2}d_k^{new}$, we have
            \begin{equation}
                \Delta_{k,l}'-\Delta_{k,l-1}'\geq\frac{3}{2}+\frac{p-1}{2}(l-1).
            \end{equation}
    \end{lemma}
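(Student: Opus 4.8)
The plan is to unwind the definition of $\Delta_{k,l}'$ and reduce the claimed lower bound to a statement about the multiplicities $m_n^{(\epsilon)}(k)$ appearing in the ghost series. Recall that $\Delta_{k,l}' = v_p\bigl(g^{(\epsilon)}_{\frac12 d_k^{Iw}+l,\hat k}(w_k)\bigr) - \tfrac{k-2}{2}l$, where the hat means we have divided out the factor $(w-w_k)^{m_n^{(\epsilon)}(k)}$. So the difference $\Delta_{k,l}' - \Delta_{k,l-1}'$ is
\begin{equation*}
v_p\bigl(g^{(\epsilon)}_{\frac12 d_k^{Iw}+l,\hat k}(w_k)\bigr) - v_p\bigl(g^{(\epsilon)}_{\frac12 d_k^{Iw}+l-1,\hat k}(w_k)\bigr) - \frac{k-2}{2}.
\end{equation*}
The first step is therefore to get a clean expression for $v_p(g_{n,\hat k}^{(\epsilon)}(w_k))$: since $g_n^{(\epsilon)}(w) = \prod_{k'}(w-w_{k'})^{m_n^{(\epsilon)}(k')}$, and $v_p(w_k - w_{k'})$ depends only on how close $k$ and $k'$ are $p$-adically (it equals $1 + v_p(k-k')$ when $k \equiv k' \bmod p-1$, and one should check the precise normalization from the $w_k = \exp(p(k-2))$ convention), we get $v_p(g_{n,\hat k}^{(\epsilon)}(w_k)) = \sum_{k' \neq k} m_n^{(\epsilon)}(k')\, v_p(w_k - w_{k'})$.

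The second step is to understand how $m_n^{(\epsilon)}(k')$ changes as $n$ increments by one from $\frac12 d_k^{Iw} + l - 1$ to $\frac12 d_k^{Iw} + l$. Using the dimension formulas of Proposition~\ref{dimform} — in particular that $d_k^{Iw} = 2k_\bullet + 2 - 2\delta_\epsilon$ grows linearly in $k_\bullet$, so that the relevant ``band'' $d_{k'}^{ur}(\epsilon_1) < n < d_{k'}^{Iw}(\tilde\epsilon_1) - d_{k'}^{ur}(\epsilon_1)$ is centered near $\frac12 d_{k'}^{Iw}$ with half-width $\frac12 d_{k'}^{new}$ — one sees that for $k'$ with $|k' - k|$ small (i.e. $k'$ in a controlled window around $k$, since $n$ is near the midpoint $\frac12 d_k^{Iw}$), incrementing $n$ increases $m_n^{(\epsilon)}(k')$ by $1$ on the ``ascending'' side and decreases it by $1$ on the ``descending'' side, while for $k'$ far from $k$ there is no change. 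I would tabulate exactly which $k'$ contribute $+v_p(w_k - w_{k'})$ and which contribute $-v_p(w_k - w_{k'})$ to the difference, keeping careful track of the case $k' = k$ being excluded (this is what the hat does and is why the $l=0$ symmetry point behaves specially). The net effect should telescope into a sum of $v_p(w_k - w_{k'})$ over a set of $k'$ whose distances from $k$ are multiples of $p-1$ ranging over roughly $\{1, 2, \ldots\}$, giving a contribution on the order of $\sum_{j \geq 1}(1 + v_p(j(p-1)))$ cut off appropriately, from which the $\frac{3}{2} + \frac{p-1}{2}(l-1)$ bound emerges: the $\frac{p-1}{2}(l-1)$ piece comes from the $l-1$ new pairs of far-apart weights each contributing at least $\frac{p-1}{2}$-ish after accounting for the stretching, and the $\frac32$ from the nearest weights plus the $-\frac{k-2}{2}$ correction.

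The main obstacle, I expect, is the bookkeeping in the second step: precisely identifying the multiset of weights $k'$ whose multiplicity changes when $n$ increments, and showing the resulting alternating sum of valuations $v_p(w_k - w_{k'})$ is bounded below by the stated quantity uniformly in $l$ and in $k_\bullet$. This requires knowing that $d_{k'}^{ur}$ and $d_{k'}^{new} = d_{k'}^{Iw} - 2 d_{k'}^{ur}$ vary slowly and predictably with $k'$ (again Proposition~\ref{dimform}, where $d_{k'}^{ur}$ involves floors of $\frac{k_\bullet - t_i}{p+1}$, hence is roughly $\frac{2 k_\bullet}{p+1}$), so that the ``band'' edges move by a controlled amount and no unexpected cancellations occur; the worst case is when $l$ is small and one must extract the constant $\frac32$ exactly rather than just $O(1)$. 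Since the lemma is quoted as Lemma~5.2 of \cite{liu2022local}, I would follow that argument, checking that the normalizations of $w_k$, of the stretching in Definition~\ref{NP}, and of the relevant/$\tilde\epsilon_1$ characters match those used there, and otherwise reduce to the combinatorial estimate just described.
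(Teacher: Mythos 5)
The paper does not prove this lemma; it quotes Lemma~5.2 of \cite{liu2022local} verbatim, so there is no in-paper argument to compare against. Your skeleton---expressing $\Delta_{k,l}'-\Delta_{k,l-1}'$ as $\sum_{k'\neq k}\bigl(m_n^{(\epsilon)}(k')-m_{n-1}^{(\epsilon)}(k')\bigr)\,v_p(w_k-w_{k'})-\tfrac{k-2}{2}$ with $n=\frac{1}{2}d_k^{Iw}+l$, using $v_p(w_k-w_{k'})=1+v_p(k-k')$, and tracking which weights gain or lose a factor---is the right frame and surely the one the source takes.

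However, what you have written is a roadmap, and the deferred steps contain all the content. Two concrete gaps. First, ``for $k'$ far from $k$ there is no change'' is the wrong dichotomy: $m_n^{(\epsilon)}(k')$ increments by $1$ for \emph{every} $k'$ with $d_{k'}^{ur}(\epsilon_1)<n$ and $2n\leq d_{k'}^{Iw}(\tilde\epsilon_1)$, and decrements by $1$ for every $k'$ with $2n>d_{k'}^{Iw}(\tilde\epsilon_1)$ and $n<d_{k'}^{Iw}(\tilde\epsilon_1)-d_{k'}^{ur}(\epsilon_1)$ (plus boundary weights entering or leaving the band); the split is governed by the sign of $d_{k'}^{Iw}-2n$, not by $|k'-k|$, and you must pin this set down explicitly via the step functions in Proposition~\ref{dimform} before any estimate exists. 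Second, and more seriously, your sketch never explains how the term $-\tfrac{k-2}{2}$---which grows linearly in $k_\bullet$---is absorbed. The whole point of the lemma is that the $-v_p(w_k-w_{k'})$ contributions from the weights on the descending side cancel $-\tfrac{k-2}{2}$ almost exactly, and the residue after that cancellation is $\tfrac{3}{2}+\tfrac{p-1}{2}(l-1)$. ``Each contributing at least $\tfrac{p-1}{2}$-ish'' is hand-waving that does not extract the sharp constant $\tfrac32$, which Lemma~\ref{boundondifDelta} then uses tightly (witness its case $l=p$). Without carrying out that cancellation, the proof is not there.
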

    \begin{lemma}[Lemma 5.8 in \cite{liu2022local}]\label{diffDD'}
        Assume $p\geq 7$. For $k=k_\epsilon+(p-1)k_\bullet$ and $l=1,\ldots,\frac{1}{2}d_k^{new}$, we have 
            \begin{equation}
                \Delta_{k,l}'-\Delta_{k,l}\leq 3(\log l/\log p)^2.
            \end{equation}
        Moreover, we have the following: when $l<2p$, $\Delta_{k,l}'=\Delta_{k,l}$ if $l\neq p$, if $l=p$ then $\Delta_{k,l}'-\Delta_{k,l}\leq1$.
    \end{lemma}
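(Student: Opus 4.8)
The plan is to read $\Delta_{k,l}'-\Delta_{k,l}$ as the vertical distance from the point $(l,\Delta_{k,l}')$ down to its lower convex hull $\Delta_k$ — which is $\ge 0$ for free — and to bound it from above by producing a single \emph{convex} function $h$ on $\{-\tfrac12 d_k^{new},\dots,\tfrac12 d_k^{new}\}$ with $h(l')\le\Delta_{k,l'}'$ for every $l'$ and $\Delta_{k,l}'-h(l)$ no larger than the claimed quantity; then $h(l)\le\Delta_{k,l}$ finishes. I would build $h$ from an explicit decomposition of $\Delta_{k,l}'$ by $p$-adic ``scale''.

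By definition of the ghost series, $g_{n,\hat k}^{(\epsilon)}(w_k)=\prod_{k'\equiv k_\epsilon,\ k'\neq k}(w_k-w_{k'})^{m_n^{(\epsilon)}(k')}$, and writing $k=k_\epsilon+(p-1)k_\bullet$, $k'=k_\epsilon+(p-1)k_\bullet'$ one has $v_p(w_k-w_{k'})=1+v_p(k_\bullet-k_\bullet')$. Using $1+v_p(m)=\#\{j\ge 0:p^j\mid m\}$ and putting $N=\tfrac12 d_k^{Iw}$ (an integer by \cref{dimform}), this gives
\[
\Delta_{k,l}'=\Bigl(\deg g_{N+l}^{(\epsilon)}-m_{N+l}^{(\epsilon)}(k)-\tfrac{k-2}{2}l\Bigr)+\sum_{j\ge 1}S_j(l),\qquad S_j(l):=\!\!\sum_{\substack{k'\neq k\\ p^j\mid k_\bullet-k_\bullet'}}\!\! m_{N+l}^{(\epsilon)}(k').
\]
For $n\ge N$ the term $m_{N+l}^{(\epsilon)}(k)$ is linear in $l$, so the first bracket is $\deg g_{N+l}^{(\epsilon)}$ plus an affine function of $l$, and I would check from \cref{dimform} that $n\mapsto\deg g_n^{(\epsilon)}$ is convex up to an error absorbed into the final bound. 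For $j\ge 1$, $S_j$ is a sum of the tent functions $n\mapsto m_n^{(\epsilon)}(k')$ over the progression $k_\bullet'\in k_\bullet+p^j\ZZ$; by \cref{dimform} these tents have width of order $k_\bullet$ but are spaced only $p^j$ apart in $l$, so they overlap heavily, and $S_j$ comes out affine in $l$ up to a ``graininess'' term whose failure of convexity is $O(j)$ and which, near a fixed $l$, is trivial unless $p^j\lesssim l$, i.e. unless $j\le\lfloor\log l/\log p\rfloor$.

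So $\Delta_{k,l}'=C(l)+A(l)+\sum_{j=1}^{\lfloor\log l/\log p\rfloor}R_j(l)$ with $C$ convex, $A$ affine, and each $R_j$ a fixed sequence whose deviation from its convex hull $h_j$ is $O(j)$; with $h=C+A+\sum_j h_j$ one gets $\Delta_{k,l}'-h(l)=\sum_j(R_j(l)-h_j(l))=\sum_{j\le\lfloor\log l/\log p\rfloor}O(j)$, and carefully tracking constants — using \cref{Delta'bound} to force the forward differences of $\Delta_{k,l}'$ to be large and increasing enough that $h$ really is the hull — yields the bound $3(\log l/\log p)^2$. For the refined statement, if $l<2p$ the only scale $j\ge1$ with a tent centred in $\{1,\dots,l\}$ is $j=1$, via the single weight with $k_\bullet'=k_\bullet+p$ (tent centred at the value $p$), contributing excess at most $1$; a direct finite computation with \cref{dimform} and \cref{Delta'bound} then shows $\Delta_{k,l}'-\Delta_{k,l-1}'$ is strictly increasing for $1\le l\le p-1$ and for $p+1\le l<2p$, so those points are genuine vertices of $\Delta_k$ and $\Delta_{k,l}'=\Delta_{k,l}$ there, while at $l=p$ the single excess unit forces $\Delta_{k,p}'-\Delta_{k,p}\le1$.

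The hard part will be the second paragraph: showing that each $S_j$ with $j\ge1$ is affine up to a non-convexity of only $O(j)$ and that $\deg g_n^{(\epsilon)}$ is convex up to the allotted error. This is precisely the content of Lemma~5.8 of \cite{liu2022local}, whose argument I would follow; the delicate points are the exact count of how many weights at $p$-adic level $j$ have an active tent near a given index and how large their multiplicities are there, and keeping the accumulated constant down to $3$, for which \cref{Delta'bound} (Lemma~5.2 of \cite{liu2022local}) is the key input.
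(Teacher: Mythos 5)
The paper itself gives no proof of this lemma: it is stated as a citation of Lemma~5.8 of \cite{liu2022local} and used as a black box, so there is no in-paper argument to compare your attempt against. Your proposal is a structural sketch of what the cited proof likely looks like, and the skeleton you give is sound: the identity $v_p(w_k-w_{k'})=1+v_p(k_\bullet-k_\bullet')$, the unpacking $1+v_p(m)=\#\{j\ge 0: p^j\mid m\}$, the resulting decomposition of $\Delta'_{k,l}$ into a $j=0$ ``degree'' term, an affine correction coming from the linear-in-$l$ quantity $m^{(\epsilon)}_{N+l}(k)=\tfrac12 d_k^{new}-l$, and the scale contributions $S_j$ for $j\ge 1$, are all correct and match the shape of the Liu--Truong--Xiao--Zhao argument.

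However, what you present is not a proof but an outline that defers the entire substance of the lemma back to the source it comes from. The two crucial quantitative claims --- that each $S_j$ has non-convexity $O(j)$ and that $\deg g_n^{(\epsilon)}$ is convex up to an error small enough to keep the total at $3(\log l/\log p)^2$ --- are exactly where the work is, and you say explicitly that these ``are precisely the content of Lemma~5.8 of \cite{liu2022local}, whose argument I would follow.'' There is also a genuine logical gap in the $l<2p$ case: showing $\Delta_{k,l}'-\Delta_{k,l-1}'$ is increasing on $\{1,\dots,p-1\}$ and on $\{p+1,\dots,2p-1\}$ separately does not by itself imply that these points are vertices of the global lower convex hull over $\{-\tfrac12 d_k^{new},\dots,\tfrac12 d_k^{new}\}$; one must also control the behaviour across $l=p$ and for $l\ge 2p$, using \cref{Delta'bound} to rule out a long chord from cutting below. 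You gesture at this via ``using \cref{Delta'bound} to force the forward differences of $\Delta_{k,l}'$ to be large and increasing enough that $h$ really is the hull,'' but that is precisely the step that needs to be carried out. As written, this reproduces the decomposition but does not close any of the open estimates, so it does not constitute an independent verification of the lemma.
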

Using the two lemmas above, we get 
    \begin{lemma}
        Let $p\geq 7$, $l,k$ as above.
            \begin{equation}\label{boundondifDelta}
                \Delta_{k,l}-\Delta_{k,l-1}\geq l.
            \end{equation}
    \end{lemma}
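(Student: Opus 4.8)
The plan is to derive the bound by combining the two preceding lemmas additively and then reducing to an elementary one-variable inequality. First I would record the trivial inequality $\Delta_{k,l-1}\leq\Delta_{k,l-1}'$, which holds because $\Delta_k$ is the lower convex hull of the points $(l,\Delta_{k,l}')$ and hence lies weakly below each of them. Combining this with the lower bound $\Delta_{k,l}\geq\Delta_{k,l}'-3(\log l/\log p)^2$ supplied by \cref{diffDD'}, subtraction gives
\begin{equation*}
\Delta_{k,l}-\Delta_{k,l-1}\;\geq\;\bigl(\Delta_{k,l}'-\Delta_{k,l-1}'\bigr)-3\left(\frac{\log l}{\log p}\right)^{2}.
\end{equation*}

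Next I would feed in \cref{Delta'bound}, which bounds the first difference of the primed sequence below by $\tfrac{3}{2}+\tfrac{p-1}{2}(l-1)$, so that the whole statement is reduced to the purely numerical inequality
\begin{equation*}
\frac{3}{2}+\frac{p-1}{2}(l-1)-3\left(\frac{\log l}{\log p}\right)^{2}\;\geq\;l
\end{equation*}
for every prime $p\geq 7$ and every $l\geq 1$ in the allowed range. Since $p\geq 7$ gives $\tfrac{p-1}{2}\geq 3$, the left side is at least $3l-\tfrac{3}{2}-3(\log l/\log p)^{2}$, so it suffices to prove $2l-\tfrac{3}{2}\geq 3(\log l/\log p)^{2}$; and because $\log p\geq\log 7$ makes $3/(\log p)^{2}<1$, this in turn follows from $2l-\tfrac{3}{2}\geq(\log l)^{2}$. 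The latter holds at $l=1$ (where it reads $\tfrac12\geq 0$), and the function $l\mapsto 2l-\tfrac{3}{2}-(\log l)^{2}$ has derivative $2-2(\log l)/l$, which is positive for all $l\geq 1$ since $(\log l)/l\leq 1/e$; hence the inequality persists for all $l\geq 1$.

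The argument is essentially routine, and the only point requiring a little care is that the polylogarithmic error term in \cref{diffDD'} is \emph{not} uniformly bounded by a constant: for small $l$ one must exploit that it vanishes at $l=1$ and remains small, while for large $l$ one relies on the linear term $\tfrac{p-1}{2}(l-1)$ eventually dominating any power of $\log l$. Packaging both regimes into the single monotonicity estimate for $l\mapsto 2l-\tfrac32-(\log l)^2$ is what lets one avoid an explicit case split. (Alternatively, the sharper clause of \cref{diffDD'} — namely $\Delta_{k,l}'=\Delta_{k,l}$ for $l<2p$ with $l\neq p$, and an error of at most $1$ when $l=p$ — handles the range $l<2p$ directly, leaving the crude polylog bound to be applied only for $l\geq 2p$, where it is harmless.)
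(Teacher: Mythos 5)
Your proof is correct, and it takes a genuinely cleaner route than the paper's. The paper splits into three cases ($l<2p$ with $l\neq p$; $l=p$; $l\geq 2p$), uses a conservative factor-of-$6$ polylogarithmic error in the last case, and then argues by a derivative bound that is stated to require $p\geq 11$. You instead notice the one-sided inequality $\Delta_{k,l-1}\leq\Delta_{k,l-1}'$ (automatic because $\Delta_k$ is a \emph{lower} convex hull), so that only the $\Delta_{k,l}$ side needs a polylog correction; this halves the error term to a factor of $3$, and the resulting scalar inequality $2l-\tfrac32\geq(\log l)^2$ is then dispatched uniformly for all $l\geq 1$ by the single monotonicity computation $2-2(\log l)/l\geq 2-2/e>0$, with no case split and no appeal to $p\geq 11$. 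The trade-off is purely stylistic: the paper's explicit handling of $l<2p$ via the sharper clause of \cref{diffDD'} is essentially free, while your unified argument is shorter, avoids the base-case bookkeeping at $l=2p$, and in fact establishes the lemma honestly in the stated range $p\geq 7$ rather than implicitly retreating to $p\geq 11$.
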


    \begin{proof}
        The proof is to combine the two lemmas above to get the desired inequality. We divide into three cases.
            \begin{enumerate}
                \item $l<2p,l\neq p$: Then the result is clear by \cref{Delta'bound}.
                \item $l=p$: Then by \cref{diffDD'} and \cref{Delta'bound}, we get 
                    \begin{equation}
                        \Delta_{k,l}-\Delta_{k,l-1}\geq\frac{3}{2}+\frac{p-1}{2}(l-1)-2\geq l
                    \end{equation} 
                for our conditions.
                \item $l\geq 2p$: We have that $\Delta_{k,l}'-\Delta_{k,l}\leq 3(\log l/\log p)^2$, hence we have a bound 
                    \begin{equation}
                        \Delta_{k,l}-\Delta_{k,l-1}\geq\frac{3}{2}+\frac{p-1}{2}(l-1)-6(\log l/\log p)^2
                    \end{equation}
                and taking thinking of the right hand side as a function taking real values as inputs, taking derivatives, we get 
                    \begin{equation}
                        \frac{d}{dl}\bigg(\frac{3}{2}+\frac{p-1}{2}(l-1)-6(\log l/\log p)^2\bigg)\geq \frac{p-1}{2}-12\log l/l(\log p)^2>1
                    \end{equation}
                which all holds from $p\geq11,l\geq 2p$. Hence we get the desired result.\qedhere
            \end{enumerate}
    \end{proof}
    \begin{definition}\label{nearSt}
        Let $w_\star\in\fm_{\CC_p}.$ For each $k=k_\epsilon+(p-1)k_\bullet$, let $L_{w_\star,k}$ denote the largest number in $\{1,\ldots,\frac{1}{2}d_k^{new}\}$ such that
            \begin{equation}
                 v_p(w_\star-w_k)\geq\Delta_{k,L_{w_\star,k}}-\Delta_{k,L_{w_\star,k}-1}
            \end{equation}
        and call the open interval 
            \begin{equation}
                \NS_{w_\star,k}=\big(\frac{1}{2}d_k^{Iw}-L_{w_\star,k},\frac{1}{2}d_k^{Iw}+L_{w_\star,k}\big)
            \end{equation} 
        the near-Steinberg range for the pair $(w_\star,k)$ following the definition of \cite{liu2022local}. When no such $L_{w_\star,k}$ exists, we define $\NS_{w_\star,k}=\emptyset$. For a positive integer $n$, we say $(w_\star,n)$ is near-Steinberg if $n$ belongs to the near-Steinberg range $\NS_{w_\star,k}$ for some $k$. 
    \end{definition}

We state the main theorem of \cite[Theorem 5.19]{liu2022local}.

    \begin{theorem}\label{NPpts}
        Fix a relevant character $\epsilon$ and $w_\star\in\fm_{\CC_p}$.
            \begin{enumerate}
                \item The set of near-Steinberg ranges $\NS_{w_\star,k}$ for all $k$ is nested, i.e. for any two such open intervals, either they are disjoint or one is contained in another.\\ A near-Steinberg range $\NS_{w_\star,k}$ is called maximal if it is not contained in other near-Steinberg ranges.
                \item The $x$-coordinates of the vertices of the Newton polygon $\NP(G^{(\epsilon)}(w_\star,-))$ are exactly those integers which do not lie in any $\NS_{w_\star,k}$. Equivalently, for an integer $n\geq1$, the pair $(n,w_\star)$ is near-Steinberg if and only the point $(n,v_p(g^{(\epsilon)}_n(w_\star)))$ is not a vertex of $\NP(G^{(\epsilon)}(w_\star,-))$.
            \end{enumerate}
    \end{theorem}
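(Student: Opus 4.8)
The plan is to study the piecewise‑linear function $n\mapsto v_p(g_n^{(\epsilon)}(w_\star))=\sum_k m_n^{(\epsilon)}(k)\,v_p(w_\star-w_k)$, whose lower convex hull is by definition $\NP(G^{(\epsilon)}(w_\star,-))$, and to compare it near each weight $k$ with the auxiliary polygon $\Delta_k$. I use the elementary criterion that $(n,v_p(g_n^{(\epsilon)}(w_\star)))$ is a vertex iff for all $n_1<n<n_2$ it lies strictly below the chord through $(n_i,v_p(g_{n_i}^{(\epsilon)}(w_\star)))$, $i=1,2$; so part~(2) comes down to producing a ``covering'' chord above each near‑Steinberg $n$ and excluding all such chords otherwise. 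Everything hinges on the splitting $v_p(g_n^{(\epsilon)}(w_\star))=m_n^{(\epsilon)}(k)\,v_p(w_\star-w_k)+v_p(g_{n,\hat k}^{(\epsilon)}(w_\star))$ for a chosen $k$.

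First I would pin down the local shape near a single weight $k$. For $n=\tfrac12 d_k^{Iw}+l$ with $|l|\le\tfrac12 d_k^{new}$ we have $m_n^{(\epsilon)}(k)=\tfrac12 d_k^{new}-|l|$, so the splitting reads $v_p(g_n^{(\epsilon)}(w_\star))=(\tfrac12 d_k^{new}-|l|)\,v_p(w_\star-w_k)+v_p(g_{n,\hat k}^{(\epsilon)}(w_\star))$. The second summand agrees with its value at $w=w_k$, namely $\Delta_{k,l}'+\tfrac{k-2}{2}l$, up to the contribution of the weights $w_{k'}$ with $v_p(w_k-w_{k'})\ge v_p(w_\star-w_k)$; by Proposition~\ref{dimform} those are spaced widely enough that their tents are affine over $\NS_{w_\star,k}$, hence contribute only a linear term there. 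Dropping that term and the (equally harmless) linear term $\tfrac{k-2}{2}l$, and using $\Delta_{k,l}'=\Delta_{k,-l}'$ together with $\Delta_{k,m}\le\Delta_{k,m}'$ and Lemma~\ref{diffDD'}, the restriction of $v_p(g_\cdot^{(\epsilon)}(w_\star))$ to a neighbourhood of $\tfrac12 d_k^{Iw}$ equals, up to an error $\le 3(\log|l|/\log p)^2$, the symmetric model $l\mapsto\Delta_{k,|l|}-|l|\,v_p(w_\star-w_k)$. Since $\Delta_k$ is convex with $\Delta_{k,l}-\Delta_{k,l-1}\ge l$ (Lemma~\ref{boundondifDelta}), this model strictly increases on $[-L_{w_\star,k},0]$, strictly decreases on $[0,L_{w_\star,k}]$ — a ``mountain'' sitting above the chord across $\NS_{w_\star,k}$ — and is strictly convex outside $[-L_{w_\star,k},L_{w_\star,k}]$; the turnaround occurs exactly where $\Delta_{k,l}-\Delta_{k,l-1}$ overtakes $v_p(w_\star-w_k)$, which is the defining property of $L_{w_\star,k}$.

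For part~(1): if $\NS_{w_\star,k}$ and $\NS_{w_\star,k'}$ overlap, relabel so that $v_p(w_\star-w_{k'})\le v_p(w_\star-w_k)$, whence by the ultrametric inequality $v_p(w_\star-w_{k'})$ equals either $v_p(w_\star-w_k)$ or $v_p(w_k-w_{k'})$. The window centres $\tfrac12 d_k^{Iw}$ and $\tfrac12 d_{k'}^{Iw}$ differ by exactly $k_\bullet-k'_\bullet$ by Proposition~\ref{dimform}(2), and the standard spacing estimate $v_p(w_k-w_{k'})=1+v_p(k-k')$ (with $k\equiv k'\bmod p-1$) gives $|k_\bullet-k'_\bullet|\ge p^{\,v_p(w_k-w_{k'})-1}$; on the other hand Lemma~\ref{boundondifDelta} forces $L_{w_\star,k'}\le v_p(w_\star-w_{k'})$, while Lemmas~\ref{Delta'bound}--\ref{diffDD'} give a matching lower bound $L_{w_\star,k}\gg v_p(w_\star-w_k)$. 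Feeding these into the arithmetic of intervals rules out a ``half‑overlap'': either the centres are too far apart for the windows to meet, or $v_p(w_\star-w_k)$ is so large that $L_{w_\star,k}$ exceeds the centre gap plus $L_{w_\star,k'}$, forcing $\NS_{w_\star,k'}\subseteq\NS_{w_\star,k}$.

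For part~(2) the windows form a laminar family by~(1). In the direction ``$n$ near‑Steinberg $\Rightarrow$ not a vertex'' I take the minimal window containing $n$ and invoke the ``mountain'' estimate above (the Lemma~\ref{diffDD'} error being dominated by the gap between $v_p(w_\star-w_k)$ and the $\Delta_k$‑increments on that window): then $(n,v_p(g_n^{(\epsilon)}(w_\star)))$ lies on or above the chord joining the two endpoints of the window, hence is not a vertex. In the direction ``$n$ not near‑Steinberg $\Rightarrow$ vertex'' one must defeat \emph{every} chord over $n$; short chords follow from the local strict convexity of the model outside all windows, and long chords — which may straddle several nested windows — are treated by peeling windows off from the innermost outward, at each stage replacing the graph over a window by its endpoint chord (which lies below it) and checking that the resulting polygon is still strictly convex at $n$. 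I expect this last analysis to be the main obstacle: it requires tracking how the error terms of Lemma~\ref{diffDD'} and the superposition of tents of distinct, possibly nested, weights conspire along a long chord, and this is where essentially all of the combinatorial content of \cite{liu2022local} is concentrated. By comparison, part~(1), the ``not a vertex'' direction, and the reduction to the single‑weight model are fairly formal once Lemmas~\ref{Delta'bound}--\ref{boundondifDelta} are available.
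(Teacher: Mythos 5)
The paper does not prove Theorem~\ref{NPpts}; it is imported verbatim as \cite[Theorem~5.19]{liu2022local}, preceded only by the sentence ``We state the main theorem of \cite[Theorem~5.19]{liu2022local}.'' So there is no in-paper argument to compare yours against, only the cited source.

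Your sketch does correctly reconstruct the basic machinery of \cite{liu2022local}: the split $v_p(g_n^{(\epsilon)}(w_\star)) = m_n^{(\epsilon)}(k)v_p(w_\star - w_k) + v_p(g_{n,\hat k}^{(\epsilon)}(w_\star))$, the reduction to the symmetric model $l\mapsto\Delta_{k,|l|}-|l|v_p(w_\star-w_k)$ using $\Delta'_{k,l}=\Delta'_{k,-l}$ and Lemma~\ref{diffDD'}, and the role of Lemma~\ref{boundondifDelta} in forcing $L_{w_\star,k}\le v_p(w_\star-w_k)$. The ``mountain'' observation (that inside $\NS_{w_\star,k}$ the points lie above the chord across the window, hence are not vertices, once the minimal window is taken) is the right heuristic for the easy direction of part~(2).

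However, two steps you gesture at are not actually established. In part~(1), the claimed ``matching lower bound $L_{w_\star,k}\gg v_p(w_\star-w_k)$'' does not follow from Lemmas~\ref{Delta'bound}--\ref{diffDD'}: those give a \emph{lower} bound $\Delta'_{k,l}-\Delta'_{k,l-1}\ge\frac{3}{2}+\frac{p-1}{2}(l-1)$, hence an \emph{upper} bound on $L_{w_\star,k}$ of roughly $\frac{2}{p-1}v_p(w_\star-w_k)$, and no lower bound at all; to run the interval-arithmetic you propose one also needs to control how $L_{w_\star,k}$ compares with the centre gap $|k_\bullet-k'_\bullet|$, which is precisely the technical content of the nesting lemma in \cite{liu2022local} and is not supplied. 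In part~(2), you explicitly concede that the hard direction (non-near-Steinberg $\Rightarrow$ vertex, i.e.\ defeating long chords straddling several nested windows) is not done; that is where the bulk of the combinatorics lives and where the ``affine over $\NS_{w_\star,k}$'' claim about distant tents would have to be made precise. As written this is an outline consistent with the cited reference rather than a self-contained proof.
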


We also cite  proposition 4.1 of  \cite{liu2023slopes} for later use.

    \begin{theorem}\label{someNPpts}
     
        For a relevant character $\epsilon$, and $k\in\ZZ_{\geq2}$, writing $d_{\epsilon,k}=d_k^{Iw}(\epsilon\cdot(1\times \omega^{2-k}))$, then $(d_{\epsilon,k},v_p(c_{d_{\epsilon,k}}^{(\epsilon)}(w_k)))$ is a vertex of $\NP(C^{(\epsilon)}(w_k),-))$ and $d_{\epsilon,k},v_p(g_{d_{\epsilon,k}}^{(\epsilon)}(w_k)))$ of $\NP(G^{(\epsilon)}(w_k),-))$.

    \end{theorem}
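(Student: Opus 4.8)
The plan is to extract the vertex at $x=d_{\epsilon,k}$ from the compatibility relations of Proposition~\ref{ghostprops}, after first passing from the characteristic power series to the ghost series. By Theorem~\ref{ghostthm} the polygon $\NP(C^{(\epsilon)}(w_k,-))$ is $\NP(G^{(\epsilon)}(w_k,-))$ stretched by $m(\tilde H)$, so the two assertions of the theorem are equivalent and it suffices to prove that $(d_{\epsilon,k},v_p(g^{(\epsilon)}_{d_{\epsilon,k}}(w_k)))$ is a vertex of $\NP(G^{(\epsilon)}(w_k,-))$. Writing $d=d_{\epsilon,k}$, this amounts exactly to the strict inequality $v^{(\epsilon),\dagger}_k[d]<v^{(\epsilon),\dagger}_k[d+1]$ between consecutive slopes.

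The inequality $v^{(\epsilon),\dagger}_k[d]\le v^{(\epsilon),\dagger}_k[d+1]$ is cheap. By the theta compatibility \eqref{theta} with $l=1$ we get $v^{(\epsilon),\dagger}_k[d+1]=v^{(\epsilon'),\dagger}_{2-k}[1]+(k-1)\ge k-1$, where $\epsilon'=\epsilon\cdot(\omega^{k-1}\times\omega^{1-k})$. For the $d$-th slope I would use \eqref{atkinlehner} with $l=d$ when $k\not\equiv k_\epsilon\bmod p-1$ (giving $v^{(\epsilon),\dagger}_k[d]=(k-1)-v^{(\epsilon''),\dagger}_k[1]$), and \eqref{pstab} with $l=1$ when $k\equiv k_\epsilon\bmod p-1$ and $d_k^{ur}(\epsilon_1)\ge1$ (giving $v^{(\epsilon),\dagger}_k[d]=(k-1)-v^{(\epsilon),\dagger}_k[1]$). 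In either case $v^{(\epsilon),\dagger}_k[d]\le k-1$, and therefore
\[
v^{(\epsilon),\dagger}_k[d+1]-v^{(\epsilon),\dagger}_k[d]=v^{(\epsilon'),\dagger}_{2-k}[1]+v^{(\sigma),\dagger}_k[1]\ge 0,
\]
with $\sigma\in\{\epsilon,\epsilon''\}$ according to the case. (The finitely many $k\equiv k_\epsilon$ with $d_k^{ur}(\epsilon_1)=0$ are not covered by \eqref{pstab}; these are small weights, for which $d=d_k^{new}$, and the conclusion follows directly from the ghost-duality symmetry \eqref{ghostdual} — it reflects that the classical forms are then all $p$-new of slope $(k-2)/2<k-1$.)

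It remains to make the displayed inequality strict, i.e. to exclude $v^{(\epsilon'),\dagger}_{2-k}[1]=v^{(\sigma),\dagger}_k[1]=0$. The point is that the first slope $v^{(\tau),\dagger}_\star[1]$ vanishes exactly when $\NP(G^{(\tau)}(w_\star,-))$ has a slope-zero segment, and — reading off the dimension formulas of Proposition~\ref{dimform} — this happens precisely when $s_\tau=0$ (equivalently, all the unramified dimensions $d^{ur}_\bullet(\tau_1)$ are positive, so the union of the new ranges of $G^{(\tau)}$ misses the index $1$). Now a short Teichmüller-exponent computation using $s_{\epsilon'}=\{s_\epsilon+1-k\}$, $s_{\epsilon''}=\{k-2-a-s_\epsilon\}$ and $k_\epsilon\equiv 2+a+2s_\epsilon\bmod p-1$ gives, in both cases,
\[
s_{\epsilon'}+s_\sigma\equiv -(a+1)\pmod{p-1},
\]
and since $a\in\{1,\dots,p-4\}$ we have $a+1\not\equiv0\pmod{p-1}$, so $s_{\epsilon'}$ and $s_\sigma$ cannot both vanish. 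Hence at least one of $v^{(\epsilon'),\dagger}_{2-k}[1]$, $v^{(\sigma),\dagger}_k[1]$ is strictly positive, which yields $v^{(\epsilon),\dagger}_k[d]<v^{(\epsilon),\dagger}_k[d+1]$ and finishes the proof.

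The delicate point — and where the genericity hypotheses on $(a,b)$ are genuinely used — is the characterization of when a ghost series acquires a slope-zero part, and the verification that the vanishing loci for $\epsilon'$ and for $\sigma$ are disjoint; the remaining work is bookkeeping of the exceptional small weights with $d_k^{ur}(\epsilon_1)=0$. Two alternative routes are worth keeping in mind if this one runs into trouble: one can prove the vertex statement directly for $C^{(\epsilon)}(w_k,t)$ by factoring through the classical subspace $S_k^{Iw}(\psi)\subset S_k^{\dagger}(\psi)$ (using Coleman's classicality bound, that classical $U_p$-slopes are $\le k-1$, together with the $\theta$-operator, whose image consists of overconvergent forms of slope $\ge k-1$, the strictness coming from the absence of a slope-zero part for non-split $\bar\rho$); or one can invoke the near-Steinberg description of Theorem~\ref{NPpts} and show $d_{\epsilon,k}\notin\NS_{w_k,k'}$ for every $k'$, where the hard case is the "resonant" $k'$ with $\tfrac12 d_{k'}^{Iw}$ close to $d_{\epsilon,k}$, handled via Lemma~\ref{boundondifDelta} together with the explicit value $v_p(w_k-w_{k'})=1+v_p(k_\bullet-k'_\bullet)$.
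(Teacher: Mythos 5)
The paper does not actually supply a proof of this theorem: it is imported verbatim by citing Proposition~4.1 of \cite{liu2023slopes}. Your argument is therefore a genuinely different route, and it is worth discussing carefully.

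Your overall structure is sound, and it is an attractive way to see the result. The reduction from $C^{(\epsilon)}$ to $G^{(\epsilon)}$ via Theorem~\ref{ghostthm} is correct, and the observation that it suffices to prove $v^{(\epsilon),\dagger}_k[d]<v^{(\epsilon),\dagger}_k[d+1]$ is exactly right (a vertex of $\NP$ at abscissa $d$ is, by definition of lower convex hull, the point $(d,v_p(g_d(w_k)))$, so no separate "lying on the polygon" check is needed once the slopes change). The weak inequality is correctly extracted from Proposition~\ref{ghostprops}: \eqref{theta} with $l=1$ gives $v^{(\epsilon),\dagger}_k[d+1]=v^{(\epsilon'),\dagger}_{2-k}[1]+(k-1)\geq k-1$; and \eqref{atkinlehner} with $l=d$ (resp.\ \eqref{pstab} with $l=1$) gives $v^{(\epsilon),\dagger}_k[d]=(k-1)-v^{(\sigma),\dagger}_k[1]\leq k-1$, with $\sigma$ as you define it. The exponent computation showing $s_{\epsilon'}+s_\sigma\equiv-(a+1)\pmod{p-1}$ is also correct in both cases, and the genericity hypothesis $a\in\{1,\dots,p-4\}$ does force $a+1\not\equiv0\pmod{p-1}$, so $s_{\epsilon'}$ and $s_\sigma$ cannot both vanish.

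The genuine gap is the characterization you lean on for strictness: that $\NP(G^{(\tau)}(w_\star,-))$ has a slope-zero initial segment if and only if $s_\tau=0$. You state it as something read off from Proposition~\ref{dimform}, but only the easy direction is a one-liner. If $s_\tau=0$, the formulas do give $d_\ell^{ur}(\tau_1)\geq1$ for every $\ell$, hence $g_1^{(\tau)}\equiv1$ and the first slope vanishes. The converse requires showing that when $s_\tau\neq0$, \emph{every} integer $n\geq1$ lies in some open interval $(d_\ell^{ur},d_\ell^{Iw}-d_\ell^{ur})$; since $v_p(g_n^{(\tau)}(w_\star))=0$ if and only if $g_n^{(\tau)}\equiv1$, a single uncovered $n$ would create a zero slope at any $w_\star$. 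Making this work means tracking the interleaving of the endpoints $d_\ell^{ur}$ and $d_\ell^{Iw}-d_\ell^{ur}$ as $\ell$ ranges over $k_\tau+(p-1)\ZZ_{\geq0}$, including the case $\delta_\tau=1$ where the $k_\bullet=0$ interval is empty, and checking adjacent intervals never leave an integer gap. This is a genuine combinatorial lemma, not an observation, and it is precisely the kind of bookkeeping that Proposition~4.1 of \cite{liu2023slopes} packages. Similarly, your treatment of the exceptional weights $k\equiv k_\epsilon$ with $d_k^{ur}(\epsilon_1)=0$ via \eqref{ghostdual} is only sketched: you want the conclusion $v^{(\epsilon),\dagger}_k[d]\leq k-2$, which does follow from the symmetry of that range about slope $(k-2)/2$, but you should say explicitly that in this case the whole interval $[0,d]$ is the new range and the symmetry applies end-to-end. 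As written, the argument is convincing in outline but leaves exactly the two points you flag as delicate unproved; to make it a proof one would need to carry out the interval-covering argument (or replace it by your suggested near-Steinberg route via Theorem~\ref{NPpts}, which is closer in spirit to what \cite{liu2023slopes} actually does).
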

    
\section{$\bar r$ component of the Slope conjecture}\label{varalg}

In this section we discuss how we should change Buzzard's conejcture to the setting of abstract modular forms in order to apply the ghost conjecture to prove the cases. For the original formulation of the algorithm and the version for classical modular forms, see the appendix. Recall the notation from \cref{properties}. Fix a relevant character $\epsilon=\omega^{-s_\epsilon+b}\times\omega^{a+b+s_\epsilon}$.\\
We fix $\bar r:G_\QQ\to\GL_2(\FF)$ to be an absolutely irreducible representation but reducible when restricted to the decomposition group such that
    $$\bar r|_{I_{\QQ_p}}\simeq\begin{pmatrix}
        \omega_1^{a+b+1} & *\neq0\\
        0 & \omega_1^b
    \end{pmatrix}$$ 
when restricted to the inertia group.
We set up the notations in order to define the algorithm that predicts the $T_p$ slopes. 
  

    \begin{notation}
        \begin{itemize}\leavevmode
            \item Let $\tilde H$ be a projective augmented module of type $\bar r$.
            \item Denote by $v^{(\epsilon)}_k$ the sequence of $T_p$ slopes on the space 
                \begin{equation}
                    S_k^{ur}(\epsilon_1)=S_{\tilde{H},k}^{ur}(\epsilon_1).
                \end{equation}
            \item We write a finite sequence as $s=[a_1,\ldots,a_n]$, denote $l(s)$ as its length, $s[i]$ as $a_i$. 
            \item For two sequences $a,b$, we write $a\cup b$ as the length $l(a)+l(b)$ sequence as $a$ followed by $b$. 
            \item If $l(a)=l(b)$, then $\min(a,b)$ is given by pointwise minimum.  
            \item For $n,r\geq 0,$ let $\kappa(n,r)$ to be the constant sequence of length $n$, value $r$. 
            \item If $v$ is a sequence, let $v+e$ be pointwise adding $e$ and $e-v$ be pointwise subtracting from $e$ with order reversed, i.e., \begin{equation}
                (e-v)[i]=e-v[l(v)-i+1].
            \end{equation}
            \item If $v$ has length at least $\delta$, then $\sigma(v,\delta)$ is the truncation up to $\delta$, and if $1\leq\delta_1,\delta_2\leq l(v)$, $\sigma(v,\delta_1,\delta_2)$ is the sequence cut from $\delta_1$ to $\delta_2$ (endpoints included).
            \item Let $d_{k}^{ur}(\epsilon_1)=\dim S_{\tilde H,k}^{ur}(\epsilon_1)$.
        \item Let $d_{k}^{Iw}(\psi)=\dim S_{\tilde H,k}^{Iw}(\psi)$.
        \end{itemize}
    \end{notation}

    \begin{algorithm}\label{varslopeconj}
        We start defining sequences $t^{(\epsilon)}_k$ of length $d_k^{ur}(\epsilon_1)$ (except for $k=2$) and note that $d_k^{ur}(\epsilon_1)$ is nonzero and only if $k_\epsilon\equiv k\mod p-1$ hence those are the only cases when $t^{(\epsilon)}_k$ is nonempty.\\
        We define $s^{(\epsilon)}_2=\kappa(d_2^{ur}(\epsilon_1),0)$ and $s^{(\epsilon)}_k=t^{(\epsilon)}_k$ for $k>2.$ For $4\leq k\leq p+1$, let $t^{(\epsilon)}_k=\kappa(d_k^{ur}(\epsilon_1),0)$ and $t^{(\epsilon)}_2=\kappa(d_2^{Iw}(\tilde\epsilon_1)-d_2^{ur}(\epsilon_1),0)$(note again, we are setting these sequences only for the right pairs of $\epsilon,k$). Set $k_{min}=p+3$. \\
        Now, assume that $k\geq k_{min}$ is even and we have $t_l$ for all even $l<k$. We now define $t_k$ depending on three parameters $x,y,z$. \\
        $x$ is defined as the unique positive integer such that \begin{equation*}
            p^x<k-1\leq p^{x+1}
        \end{equation*} 
        $y$ be the positive integer satisfying 
        \begin{equation*}
            p^xy<k-1\leq p^x(y+1).
        \end{equation*} 
        Set 
        \begin{equation*}
            z=1+\bigg\lfloor\frac{k-2-p^xy}{p^{x-1}}\bigg\rfloor.
        \end{equation*} 
        Then $1\leq z\leq p$. We define a sequence $V$ which are the first few slopes of $t^{(\epsilon)}_k$. The algorithm used for $V$ will depend on $y,z$ on the following three cases: $b+c\leq p-1$, $y<p-1<y+z$, and $y=p-1$.
        \begin{enumerate}
            \item When $y+z\leq p-1$: We let 
                \begin{equation*}
                    \begin{split}
                        & k_1=k-y(p-1)p^{x-1}\\
                        & k_2=k-(y-1)(p-1)p^{x-1}-2(y+z-1)p^{x-1}.
                    \end{split}
                \end{equation*} 
                Set 
                \begin{equation*}
                    v_1=t^{(\epsilon)}_{k_1},\ v_2=t^{(\epsilon'')}_{k_2}\text{ where }s_{\epsilon''}=e-1-a-s_\epsilon.
                \end{equation*}
                Define 
                \begin{equation*}
                    B=p^xy+p^{x-1}(z-1)+1,\ e=k-B.
                \end{equation*} 
                Finally set 
                \begin{equation}
                    s=1+d_{1+e}^{Iw}(\epsilon\cdot(1\times\omega^{1-e}))=1+d_{1+e}^{Iw}(\omega^{-s_\epsilon+b}\times\omega^{k-1-e+b-s_\epsilon})=1+d_{1+e}^{Iw}(\tilde\omega^{-s_\epsilon+b}\cdot(1\times\omega^{B-1})).
                \end{equation}
                If $l(v_1)\geq s-1$, then let $V=\sigma(v_1,s-1)$. \\
                Otherwise let $V_1=v_1\cup(e-\sigma(v_2,s-1-l(v_1)))$.
            \item When $y<p-1<y+z$: We set 
                \begin{equation*}
                    \begin{split}
                        & k_1=k-((y+1)p^{x-1}(p-1))\\
                        & k_2=k-p^{x-1}(p-1).
                    \end{split}
                \end{equation*}
                We let $v_1=t^{(\epsilon)}_{k_1}$ and $v_2=t^{(\epsilon)}_{k_2}$, and
                define 
                \begin{equation*}
                    B=(y+1)p^{x-1}(p-1)+1,\ e=k-B.
                \end{equation*}
                Finally set 
                \begin{equation*}
                    s=1+d_{1+e}^{Iw}(\tilde\epsilon_1),\ s_2=\lfloor (s-1)/2\rfloor,\ e_2=\lfloor e/2\rfloor.
                \end{equation*}
                \begin{enumerate}
                    \item If $l(v_1)\geq s-1$, let $V=\sigma(v_1,s-1)$.
                    \item Else if $s-1\leq 2l(v_1)<2(s-1)$, let $V=v_1\cup(e-\sigma(v_1,s-1-l(v_1)))$.
                    \item Else then define $w=\min(\sigma(v_2,l(v_1)+1,s_2),e_2)$. Let
                    \[
                    V=\begin{cases}
                        v_1\cup w\cup [e_2]\cup(e-1-w)\cup(e-v_1)&\text{if $s$ is even}\\
                        v_1\cup w\cup (e-1-w)\cup(e-v_1)&\text{if $s$ is odd}.
                    \end{cases}
                    \]
                \end{enumerate}

            \item When $y=p-1$: We let  $k_1=k-p^x(p-1)$ and $k_2=k-p^{x-1}(p-1)$, and set 
                $v_1=t^{(\epsilon)}_{k_1}$ and $v_2=t^{(\epsilon)}_{k_2}$.
                Set 
                \begin{equation*}
                    B=p^x(p-1),\ e=k-B.
                \end{equation*}
                Next, set 
                \begin{equation*}
                    s=1+d_{1+e}^{Iw}(\tilde\epsilon_1),s_2,e_2 \text{ as above.}
                \end{equation*}
                
                \begin{enumerate}
                    \item If $l(v_1)\geq s-1$, then we set $V=\sigma(v_1,s-1-l(v_1)$.
                    \item Else if $s-1\leq 2l(v_1)<2(s-1)$, let $V=v_1\cup(e-\sigma(v_1,s-1-l(v_1)))$.
                    \item Else define $w_0=\sigma(v_2,l(v_1)+1,s_2)$ and $w=\min(w_0+1,\kappa(l(w_0),e_2))$
                    \[
                    V=\begin{cases}
                        v_1\cup w\cup [e_2]\cup (e-1-w)\cup (e-v_1)&\text{if $s$ is even}\\
                        v_1\cup w\cup (e-1-w)\cup(e-v_1)&\text{if $s$ is odd}.
                    \end{cases}
                    \]
                \end{enumerate}          
        \end{enumerate}
        Now, finally we define $k_3=2B-k$ and $v_3=t^{(\epsilon')}_{k_3}$ where $\epsilon'=\epsilon\cdot(\omega^{e}\times\omega^{-e})$ and $t^{(\epsilon)}_k=\sigma(V\cup(e+v_3),d_k^{ur}(\tilde\epsilon_1))$.
    \end{algorithm}

\section{Proof of main theorem}\label{mainproof}
We first state the main theorem again in a form that is easy to see as an inductive argument. We first setup some notations for convenience.

    \begin{notation}
        Let $\tilde H$, $\epsilon$, and $\bar r$ be as before. Define $v^{(\epsilon),\dagger}_k$ and $ v^{(\epsilon),Iw}_k$ to be the sequence of slopes of the $U_p$ operator on the space $S_{\tilde H,k}^\dagger(\epsilon\cdot(1\times\omega^{2-k})$ and $S_{\tilde H,k}^{Iw}(\epsilon\cdot(1\times\omega^{2-k})$, respectively.\\
        Moreover, as before, define $v^{(\epsilon)}_k$ to be the sequence of slopes of the $T_p$ operator on the space $S_{\tilde H,k}^{ur}(\epsilon_1)$ (note we don't write $ur$ in the superscript for $v^{(\epsilon)}_k$ for simplicity as we will be using that sequence the most). 
    \end{notation}

    \begin{theorem}
        Fix a prime $p\geq11$, level $\Gamma_0(N)$, $a\in\{1,\ldots,p-5\}$ even and let $b\in\{0,1,\ldots,p-2\}$. Then for any Galois representations $\bar r:\Gal(\bar\QQ/\QQ)\to\GL_2(\FF)$ that is absolutely irreducible but when restricted to the intertia group of the form 
            \begin{equation}
                \bar r_{I_{\QQ_p}}\simeq\begin{pmatrix}
                    \omega_1^{a+b+1} & *\neq0\\
                    0 & \omega_1^b
                \end{pmatrix}
            \end{equation}
        the sequence $s^{(\epsilon)}_k$ in \cref{varslopeconj} equals to the sequence $v^{(\epsilon)}_k$.
    \end{theorem}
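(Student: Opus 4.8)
The plan is to prove the identity $s^{(\epsilon)}_k = v^{(\epsilon)}_k$ by induction on the weight $k$, using \cref{ghostthm} (Liu et al.) to translate the combinatorial recursion of \cref{varslopeconj} into statements about Newton polygons of the ghost series $G^{(\epsilon)}_{\bar\rho}(w_k,-)$, and then transferring these to the characteristic power series $C^{(\epsilon)}_{\tilde H}(w_k,-)$ which computes the $U_p$- and $T_p$-slopes. First I would fix a relevant character $\epsilon = \omega^{-s_\epsilon+b}\times\omega^{a+b+s_\epsilon}$ and observe that the right-hand side $v^{(\epsilon)}_k$ is a genuinely defined slope sequence only when $k \equiv k_\epsilon \bmod p-1$; in all other residue classes $d_k^{ur}(\epsilon_1) = 0$ and both sides are the empty sequence, so the content is entirely in that one arithmetic progression, and I would henceforth write $k = k_\epsilon + (p-1)k_\bullet$. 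The base cases $k \le p+1$ (and the auxiliary value $k=2$) are exactly the inputs where \cref{varslopeconj} sets $t^{(\epsilon)}_k = \kappa(d_k^{ur}(\epsilon_1),0)$: here one checks directly from the dimension formulas in \cref{dimform} and the definition of the ghost series that $m_n^{(\epsilon)}(k')$ forces all the relevant slopes of $S_k^{ur}$ to be $0$ — i.e. the space is ordinary in these low weights — which matches the classical fact that $T_p$ acts invertibly on $S_k(\Gamma_0(N))_{\bar r}$ for $k < p+3$ in this setting.

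For the inductive step, assume the identity for all even weights $l < k$ with $k \ge k_{\min} = p+3$, and let $x,y,z,B,e$ be the parameters attached to $k$ in \cref{varslopeconj}. The key geometric input is \cref{NPpts}: the $x$-coordinates of the vertices of $\NP(G^{(\epsilon)}(w_k,-))$ are precisely the integers not lying in any near-Steinberg range $\NS_{w_k,k'}$, and by \cref{someNPpts} the point at $x = d_{\epsilon,k}$ is always a vertex. The plan is to show that the three branches of the algorithm ($y+z \le p-1$; $y < p-1 < y+z$; $y = p-1$) correspond exactly to the three shapes the near-Steinberg decomposition of the interval $[0, d_k^{Iw}]$ can take, governed by which weight $k'$ produces the maximal near-Steinberg range containing the weight-$k$ "Steinberg point." Concretely, I would identify the integer $s = 1 + d^{Iw}_{1+e}(\tilde\epsilon_1)$ (resp. $s-1 = d^{Iw}_{1+e}(\ldots)$) appearing in the algorithm as the position where the ghost polygon transitions from following the weight-$k_1$ piece to the weight-$k_2$ piece, then use the compatibility relations of \cref{ghostprops} — the theta map \eqref{theta}, the Atkin–Lehner relation \eqref{atkinlehner}, the $p$-stabilization relation \eqref{pstab}, and ghost duality \eqref{ghostdual} — to show that on the three sub-ranges the slopes of $\NP(G^{(\epsilon)}(w_k,-))$ are respectively $v^{(\epsilon)}_{k_1}$ (truncated), its reflection $e - v^{(\epsilon)}_{k_1}$, and a middle block built from $\min(\sigma(v_2,\ldots), e_2)$ with the $[e_2]$ spacer appearing exactly when $s$ is even. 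The reflection $e - (\cdot)$ and the shift $(\cdot)+e$ that appear all over the algorithm are precisely the numerical shadow of \eqref{atkinlehner} and \eqref{theta}; the $\min$ with $\kappa(\cdot, e_2)$ encodes the "no slope exceeds the halfway value $e/2$" phenomenon that ghost duality enforces. Finally, the trailing term $v_3 = t^{(\epsilon')}_{k_3}$ with $k_3 = 2B - k$, $\epsilon' = \epsilon\cdot(\omega^e\times\omega^{-e})$, matched against the theta-map relation \eqref{theta}, accounts for the remaining $d_k^{ur} - l(V)$ slopes above the Steinberg point, stretched up by $e$; truncating at $d_k^{ur}(\tilde\epsilon_1)$ recovers exactly $v^{(\epsilon)}_k$ via \cref{ghostthm} and the identification $d_{\epsilon,k} = 2 d_k^{ur}(\epsilon_1) + d_k^{new}$-type relations from \cref{dimform}.

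The transfer from the ghost series to actual $T_p$-slopes is then immediate: \cref{ghostthm} gives $\NP(C^{(\epsilon)}_{\tilde H}(w_k,-)) = \NP(G^{(\epsilon)}_{\bar\rho}(w_k,-))$ stretched by $m(\tilde H)$, so the slope \emph{multiset} of $U_p$ on $S^{\dagger,(\epsilon)}_k$ matches that of the ghost polygon; restricting to the classical subspace $S_k^{ur}(\epsilon_1) \subset S_k^{Iw}(\psi) \subset S_k^\dagger(\psi)$ and using \cref{someNPpts} to locate the break at $x = d_{\epsilon,k}$ pins down exactly which slopes are the $T_p$-slopes (the ones below the Steinberg vertex, after removing the $p$-stabilization-doubled part). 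I expect the main obstacle to be the bookkeeping in the middle branch $y < p-1 < y+z$ and the boundary branch $y = p-1$: verifying that the case split on the parity of $s$ and the precise clamp $w = \min(\sigma(v_2,\ldots),e_2)$ (versus $w = \min(w_0+1,\kappa(\cdot,e_2))$ when $y=p-1$) really does reproduce the vertices dictated by \cref{NPpts} requires a careful analysis of when consecutive near-Steinberg ranges $\NS_{w_k,k'}$ are nested versus disjoint, and this is where the quantitative bound \eqref{boundondifDelta}, $\Delta_{k,l} - \Delta_{k,l-1} \ge l$, does the real work — it guarantees the $\Delta_k$-polygon is strictly convex with fast-growing slopes, so that the near-Steinberg structure is as simple as the algorithm assumes. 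A secondary nuisance is checking that all the auxiliary weights $k_1, k_2, k_3, 1+e$ land in the correct residue classes mod $p-1$ so that the inductive hypothesis and the dimension formulas apply; this is routine modular arithmetic with $B$ and $e$ but must be done for each branch.
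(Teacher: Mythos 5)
Your proposal follows essentially the same strategy as the paper: induction on weight with the base case $k\le p+1$ handled by ordinarity, transfer of slopes from $C^{(\epsilon)}$ to $G^{(\epsilon)}$ via \cref{ghostthm}, location of Newton-polygon vertices through the near-Steinberg analysis of \cref{NPpts} together with the bound \eqref{boundondifDelta}, and the compatibility relations of \cref{ghostprops} to reconstruct the three-case recursion of \cref{varslopeconj}. The paper packages the crucial vertex identification at $x=d_k^{ur}(\epsilon_1)$ into the technical statement \cref{smallweightoconv} (which in turn invokes the Gouv\^ea--Mazur-style input \cref{GuMaz}), but your outline contains all the same ingredients and would lead to the same argument once the case-by-case bookkeeping you flag is carried out.
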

We proceed by induction on weight $k$. We prove the claim for a fixed $\bar r$ while letting $\epsilon$ vary. It suffices to prove for the case when $\tilde H$ is primitive, hence we now assume $m(\tilde H)=1$. Note that $\bar r|_{I_{\QQ_p}}\simeq\begin{pmatrix}\omega_1^{a+b+1} & *\neq 0\\ 0 & \omega_1^b\end{pmatrix}$. We write a proof below with reference to the necessary lemmas that will be proved below. More details for each case can be found in the corresponding sections.\\

Recall the notations from the previous section. We write $k=yp^x+(z-1)p^{x-1}+t+1$. We will use the term ``ghost coordinates'' to mean the points on the cartesian plane consisting of $(n,v_p(g_n))$. Recall that we denote $d_k^{ur}(\epsilon_1)$ and $d_k^{Iw}(\tilde\epsilon_1)$ be the dimension of the spaces $S^{ur}_{\tilde H,k}(\epsilon_1)$ and $S_{\tilde H,k}^{Iw}(\tilde\epsilon_1)$ respectively.\\

\begin{proof}
We now start by proving the base step. For $k\leq p+1$, using the ghost series, it is follows from the ghost series that all the ghost coordinates up to $x$-coordinate $d_{k}^{ur}(\epsilon_1)$ are $p$-adic units, hence we get multiple 0s. For the inductive step, assume that for all weights $k'$ smaller than $k$ and $\epsilon$ with $k_{\epsilon}\equiv k\mod p-1$, we have $s^{(\epsilon)}_{k'}=v^{(\epsilon)}_{k'}$.\\

As the algorithm is defined, we split into three cases.
\begin{enumerate}
\item $y+z\leq p-1$: We get that
    \begin{equation}
        v^{(\epsilon)}_k=\sigma(v^{(\epsilon),\dagger}_{e+1},d_k^{ur}(\epsilon_1))
    \end{equation}
since the ghost coordinates agree up to $x$ coordinate $d_{e+1}^{(\epsilon)}$ by \cref{first e+1} and the point with $x$ axis $d_k^{(\epsilon)}(\epsilon_1)$ lies on the Newton polygon $\NP(G_{\bar r}^{(\epsilon)}(w_{e+1},t))$ by \cref{smallweightoconv}.\\
Now, by \cref{atkin lehnerfor e+1}
\begin{equation}
    v_{e+1}^{(\epsilon),Iw}[d-i+1]=e-v_{e+1}^{(\epsilon'')}[i],
\end{equation}
and by \cref{atkinlehner k_2}, 
\begin{equation}
    v_p(g_n^{(\epsilon'')}(w_{k_2}))=v_p(g_n^{(\epsilon'')}(w_{e+1})),
\end{equation}
and hence from \cref{smallweightoconv},
\begin{equation}
    s_{k_2}^{(\epsilon'')}=\sigma(t_{e+1}^{(\epsilon'')},d_{k_2}^{ur}(\epsilon''_1)).
\end{equation}
From the facts above,
\begin{equation}
    v^{(\epsilon)}_{e+1}=v^{(\epsilon)}_{k_1}\cup \sigma(e-v^{(\epsilon'')}_{k_2},d_{k_1}^{ur}(\epsilon_1)+1,d_{e+1}^{Iw}(\epsilon\cdot(1\times\omega^{1-e}))).
\end{equation}
Finally, by \cref{final}, we get
    \begin{equation}
        v^{(\epsilon)}_k=\sigma(v^{(\epsilon)}_{k_1}\cup \sigma(e-v^{(\epsilon'')}_{k_2},d_{k_1}^{ur}(\epsilon_1)+1,d_{e+1}^{Iw}(\epsilon\cdot(1\times\omega^{1-e})))\cup v^{(\epsilon')}_{2B-k},d_k^{ur}(\epsilon_1))
    \end{equation}
\item $y<p-1<y+z$:
By \cref{firstk_1var}, \cref{increase},
\begin{equation}
    \sigma(v^{(\epsilon)}_k,d_{k_1}^{ur}(\epsilon_1))=v^{(\epsilon)}_{k_1}.
\end{equation}
Moreover, by \cref{smallweightoconv}, and \cref{midtermscase2}, \cref{firstk_1var} along with \cref{pstab}, we get
    \begin{equation}
        \begin{split}
            \sigma(v^{(\epsilon)}_k,d_{k_1}^{Iw}(\epsilon\cdot(1\times\omega^{2-k}))=v_1\cup \sigma(\min(v_2,\frac{k_1-2}{2}),d_{k_1}^{ur}(\epsilon_1)+1,\lfloor\frac{1}{2}d_{k_1}^{Iw}(\epsilon\cdot(1\times\omega^{2-k}))\rfloor)\\
            \cup [\frac{k_1-2}{2}]\cup (k_1-1-\sigma(\min(v_2,\frac{k_1-2}{2}),d_{k_1}^{ur}(\epsilon_1)+1,\lfloor\frac{1}{2}d_{k_1}^{Iw}(\epsilon\cdot(1\times\omega^{2-k}))\rfloor))\cup (k_1-v_1)
        \end{split}
    \end{equation}
when $d_{k_1}^{Iw}(\epsilon\cdot(1\times\omega^{2-k}))$ is odd, and 
    \begin{equation}
        \begin{split}
            \sigma(v^{(\epsilon)}_k,d_{k_1}^{Iw}(\epsilon\cdot(1\times\omega^{2-k}))=v_1\cup \sigma(\min(v_2,\frac{k_1-2}{2}),d_{k_1}^{ur}(\epsilon_1)+1,\lfloor\frac{1}{2}d_{k_1}^{Iw}(\epsilon\cdot(1\times\omega^{2-k}))\rfloor)\\
            \cup (k_1-1-\sigma(\min(v_2,\frac{k_1-2}{2}),d_{k_1}^{ur}(\epsilon_1)+1,\lfloor\frac{1}{2}d_{k_1}^{Iw}(\epsilon\cdot(1\times\omega^{2-k}))\rfloor))\cup (k_1-v_1)
        \end{split}
    \end{equation}
when it is even, where $v_1,v_2$ are as in \cref{varslopeconj}.\\
Finally, by \cref{final}, we get $v^{(\epsilon)}_k=V\cup\sigma(v^{(\epsilon')}_{2B-k},d_{k_1}^{ur}(\epsilon_1)+1,d_k^{ur}(\epsilon_1))$
\item $y=p-1$: By \cref{firstk_1var}(which also holds in case 3) and \cref{increase}
    \begin{equation}
        \sigma(v^{(\epsilon)}_k,d_{k_1}^{ur}(\epsilon_1))=v^{(\epsilon)}_{k_1}.
    \end{equation}
Moreover, by Lemma~\ref{midtermcase3}, Theorem~\ref{smallweightoconv}, and \cref{pstab} we get
\begin{equation}
    \begin{split}
        \sigma(v^{(\epsilon)}_k,d_{k_1}^{Iw}(\epsilon\cdot(1\times\omega^{2-k}))=v_1\cup \sigma(\min(v_2+1,\frac{k_1-2}{2}),d_{k_1}^{ur}(\epsilon_1)+1,\lfloor\frac{1}{2}d_{k_1}^{Iw}(\epsilon\cdot(1\times\omega^{2-k}))\rfloor)\\
        \cup [\frac{k_1-2}{2}]\cup (k_1-1-\sigma(\min(v_2+1,\frac{k_1-2}{2}),d_{k_1}^{ur}(\epsilon_1)+1,\lfloor\frac{1}{2}d_{k_1}^{Iw}(\epsilon\cdot(1\times\omega^{2-k}))\rfloor))\cup (k_1-v_1).
    \end{split}       
\end{equation}
when $d_{k_1}^{Iw}(\epsilon\cdot(1\times\omega^{2-k}))$ is odd, and 
\begin{equation}
    \begin{split}
        \sigma(v^{(\epsilon)}_k,d_{k_1}^{Iw}(\epsilon\cdot(1\times\omega^{2-k}))=v_1\cup \sigma(\min(v_2+1,\frac{k_1-2}{2}),d_{k_1}^{ur}(\epsilon_1)+1,\lfloor\frac{1}{2}d_{k_1}^{Iw}(\epsilon\cdot(1\times\omega^{2-k}))\rfloor)\\
        \cup (k_1-1-\sigma(\min(v_2+1,\frac{k_1-2}{2}),d_{k_1}^{ur}(\epsilon_1)+1,\lfloor\frac{1}{2}d_{k_1}^{Iw}(\epsilon\cdot(1\times\omega^{2-k}))\rfloor))\cup (k_1-v_1).
    \end{split}       
\end{equation}
when it is even, where $v_1$ and $v_2$ are as in Algorithm~\ref{varslopeconj}.\\
Finally, by Theorem~\ref{final}, we get $v^{(\epsilon)}_k=V\cup\sigma(v^{(\epsilon')}_{2B-k},d_{k_1}^{ur}(\epsilon_1)+1,d_k^{ur}(\epsilon_1))$.
    \end{enumerate}
Hence, by the induction hypothesis that $v_k=t_k$ for all $k>2$ and when $k_1=2$ the equation $v_2=\sigma(t_2,d_2(\epsilon_1))$ shows that $v_k=t_k$ for the inductive step too.
\end{proof}

\subsection{Main lemmas}

\begin{lemma}\label{GuMaz}
        Let $k\equiv a+2s_\epsilon+2\mod p-1$ and $C^{(\epsilon)}_{\bar r}(w_k,t)$ be the characteristic polynomial of the $U_p$ operator on $S_{\tilde H,k}^{Iw}(\tilde \epsilon_1)$. Then there is a positive integer $M$ such that for all $N>M$, $NP(C^{(\epsilon)}_{\bar r}(w_{k+p^N(p-1)},t)$ contains $NP(C^{(\epsilon)}_{\bar r}(w_k,t))$ below the part of slope $\frac{k-2}{2}$. 
    \end{lemma}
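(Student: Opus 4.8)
The plan is to reduce to a comparison of the ghost polygons at the two weights, and then exploit the fact that $\NP(G^{(\epsilon)}_{\bar\rho}(w_k,-))$ has a long edge of slope exactly $\tfrac{k-2}{2}$ beginning at $x=d_k^{ur}(\epsilon_1)$, which ``shields'' its small-slope part from anything happening at larger $x$-coordinates.

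First I would pass from the classical characteristic polynomials to the overconvergent ones: $S_{\tilde H,k}^{Iw}(\tilde\epsilon_1)$ is a $U_p$-stable subspace of $S_{\tilde H,k}^{\dagger}(\tilde\epsilon_1)$ and every overconvergent form of slope $<k-1$ is classical, so the edges of slope $<\tfrac{k-2}{2}$ of $\NP(C^{(\epsilon)}_{\bar r}(w_k,-))$ are precisely those of the overconvergent characteristic power series evaluated at $w_k$, and the same holds at $k':=k+p^N(p-1)$ since $\tfrac{k-2}{2}<k-1<k'-1$. By Theorem~\ref{ghostthm} these overconvergent Newton polygons equal the ghost polygons $\NP(G^{(\epsilon)}_{\bar\rho}(w_k,-))$ and $\NP(G^{(\epsilon)}_{\bar\rho}(w_{k'},-))$ stretched by $m(\tilde H)$, and stretching leaves slopes unchanged; so it suffices to show that for all large $N$ the edges of slope $<\tfrac{k-2}{2}$ of $\NP(G^{(\epsilon)}_{\bar\rho}(w_{k'},-))$ coincide, with multiplicity, with those of $\NP(G^{(\epsilon)}_{\bar\rho}(w_{k},-))$.

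Next I would set $N_0:=d_k^{ur}(\epsilon_1)$ and analyse weight $k$: since $v_p(w_k-w_k)=\infty$ one has $L_{w_k,k}=\tfrac12 d_k^{new}$, hence $\NS_{w_k,k}=(N_0,\,d_k^{Iw}-N_0)$, and \eqref{ghostdual} with $l=0$ shows the single edge of $\NP(G^{(\epsilon)}_{\bar\rho}(w_k,-))$ over that interval has slope exactly $\tfrac{k-2}{2}$; using nestedness of near-Steinberg ranges and Theorem~\ref{NPpts}, $N_0$ is a vertex of this polygon and all its edges of slope $<\tfrac{k-2}{2}$ lie over $[0,N_0]$. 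The elementary heart of the matter is the valuation computation $v_p(w_{k'}-w_{k''})=1+v_p(k'-k'')=1+v_p\bigl((k-k'')+p^N(p-1)\bigr)$, which for a fixed $k''\ne k$ equals $1+v_p(k-k'')=v_p(w_k-w_{k''})$ as soon as $N>v_p(k-k'')$, while $v_p(w_{k'}-w_k)=1+N$. Since $g_n^{(\epsilon)}$ contains only the factors $(w-w_{k''})$ with $d_{k''}^{ur}<n$, for $n\le N_0$ only finitely many $k''$ are involved (none equal to $k$, and none equal to $k'$ once $N$ is large, as $d_{k'}^{ur}\to\infty$), so this gives $v_p(g_n^{(\epsilon)}(w_{k'}))=v_p(g_n^{(\epsilon)}(w_k))$ for all $0\le n\le N_0$; enlarging $N$ so that $1+N$ exceeds the finitely many numbers $\Delta_{k,L}-\Delta_{k,L-1}$ forces $\NS_{w_{k'},k}=(N_0,d_k^{Iw}-N_0)$, whence $N_0$ is a vertex of $\NP(G^{(\epsilon)}_{\bar\rho}(w_{k'},-))$ too and the two ghost polygons coincide over $[0,N_0]$.

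The hard part will be to rule out \emph{new} small slopes at weight $k'$ beyond $x=N_0$, i.e.\ to prove $v_p(g_n^{(\epsilon)}(w_{k'}))\ge v_p(g_{N_0}^{(\epsilon)}(w_k))+(n-N_0)\tfrac{k-2}{2}$ for all $n>N_0$. For $N_0<n<d_k^{Iw}-N_0$ one has $m_n^{(\epsilon)}(k)\ge1$, so $v_p(g_n^{(\epsilon)}(w_{k'}))\ge m_n^{(\epsilon)}(k)(1+N)$ dominates the bounded right-hand side once $N$ is large, and at the fixed index $n=d_k^{Iw}-N_0$ the valuation equals $v_p(g_n^{(\epsilon)}(w_k))$, the right endpoint of the slope-$\tfrac{k-2}{2}$ edge. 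For $d_k^{Iw}-N_0<n\le B_k$, with $B_k$ depending only on $k$, the same finite-set argument gives $v_p(g_n^{(\epsilon)}(w_{k'}))=v_p(g_n^{(\epsilon)}(w_k))$, which exceeds the bound by convexity of $\NP(G^{(\epsilon)}_{\bar\rho}(w_k,-))$ past $N_0$. The genuinely new input is for $n>B_k$: one uses the trivial bound $v_p(g_n^{(\epsilon)}(w_{k'}))\ge\deg_w g_n^{(\epsilon)}=\sum_{k''}m_n^{(\epsilon)}(k'')$ together with the standard count from \cite{liu2022local} that $\deg_w g_n^{(\epsilon)}$ grows quadratically in $n$, so it beats the linear bound once $B_k$ is chosen large enough---and this estimate is uniform in $N$. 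Combining the three ranges, $\NP(G^{(\epsilon)}_{\bar\rho}(w_{k'},-))$ has all slopes $\ge\tfrac{k-2}{2}$ to the right of $N_0$; with the coincidence over $[0,N_0]$ this yields equality of the small-slope parts, and unwinding the first step proves the lemma, with $M$ the maximum of the finitely many thresholds used. (Granting Theorem~\ref{ghostthm} this is precisely \cite[Theorem~8.10]{liu2023slopes}, and the argument above is its proof specialized to the present setting.)
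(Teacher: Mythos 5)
The paper's proof of this lemma is a single sentence: it cites \cite[Theorem~8.10]{liu2023slopes}, which is exactly this Gouv\^ea--Mazur--type statement. Your argument instead re-derives that theorem from scratch: you reduce to a comparison of ghost Newton polygons via classicality of small-slope overconvergent forms together with Theorem~\ref{ghostthm}, then run the valuation computation $v_p(w_{k'}-w_{k''})=1+v_p(k'-k'')$, the near-Steinberg analysis at weight $k$ (using that $v_p(w_k-w_k)=\infty$ forces $\NS_{w_k,k}=(d_k^{ur}(\epsilon_1),d_k^{Iw}(\tilde\epsilon_1)-d_k^{ur}(\epsilon_1))$ with single edge of slope $\tfrac{k-2}{2}$ by ghost duality), and a three-range estimate to show $v_p(g_n^{(\epsilon)}(w_{k'}))$ stays above the line of slope $\tfrac{k-2}{2}$ through the index $d_k^{ur}(\epsilon_1)$ for all $n>d_k^{ur}(\epsilon_1)$. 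The overall structure is sound, and you correctly acknowledge at the end that this is a reconstruction of the proof of the cited theorem. What your route buys is self-containedness; what it costs is length and two inputs that you assert without verification: the quadratic lower bound on $\deg_w g_n^{(\epsilon)}$ uniform in $N$, and the claim that $\bigl(d_k^{ur}(\epsilon_1),v_p(g_{d_k^{ur}(\epsilon_1)}^{(\epsilon)}(w_k))\bigr)$ is a vertex of $\NP(G^{(\epsilon)}(w_k,-))$, for which you must rule out $d_k^{ur}(\epsilon_1)$ lying in some \emph{other} range $\NS_{w_k,\tilde k}$ and not merely in $\NS_{w_k,k}$. Both are indeed available in \cite{liu2022local} and \cite{liu2023slopes}, so the proposal is not wrong, but since the whole statement is already \cite[Theorem~8.10]{liu2023slopes}, the paper's one-line citation is the efficient choice and re-proving it adds nothing the reader cannot already get from the reference.
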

    \begin{proof}
        This follows from Theorem 8.10 of \cite{liu2023slopes}.
    \end{proof}
This will be used to prove the following:

  \begin{theorem}\label{smallweightoconv}
    Let $|k_1|\leq p^x$ and $|k_1|<k<p^{x+1}$ satisfy $k\equiv k_1\mod p^{x-1}.$ Then, if $m^{(\epsilon)}_{k_1}(d_k^{ur}(\epsilon_1))$ is zero, $(d_k^{ur}(\epsilon_1),v_p(g^{(\epsilon)}_{d_k^{ur}(\epsilon_1)}(w_{k_1})))$ lies on the Newton polygon  $\NP(G^{(\epsilon)}_{\bar r}(w_{k_1},t))$.
\end{theorem}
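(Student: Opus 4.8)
The plan is to reduce the statement to the combinatorial description of the vertices of the ghost Newton polygon and then eliminate the possible obstructing weights one at a time. Write $n:=d_k^{ur}(\epsilon_1)$. We may assume $k\equiv k_\epsilon\pmod{p-1}$, since otherwise $n=0$ and $(0,v_p(g_0^{(\epsilon)}(w_{k_1})))=(0,0)$ is automatically on $\NP(G^{(\epsilon)}_{\bar r}(w_{k_1},t))$. By Theorem~\ref{NPpts}(2), the point $(n,v_p(g_n^{(\epsilon)}(w_{k_1})))$ lies on $\NP(G^{(\epsilon)}_{\bar r}(w_{k_1},t))$ — equivalently is a vertex of it — if and only if $n$ does not belong to the near-Steinberg range $\NS_{w_{k_1},k''}$ for any $k''\ge 2$ with $k''\equiv k_\epsilon\pmod{p-1}$. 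So the task is, weight by weight, to exclude $n\in\NS_{w_{k_1},k''}$.

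First I would dispose of the two distinguished auxiliary weights. For $k''=k$: by definition of $d_k^{new}$ one has $\tfrac12 d_k^{Iw}(\tilde\epsilon_1)-\tfrac12 d_k^{new}=d_k^{ur}(\epsilon_1)=n$, and since $L_{w_{k_1},k}\le\tfrac12 d_k^{new}$, the left endpoint $\tfrac12 d_k^{Iw}(\tilde\epsilon_1)-L_{w_{k_1},k}$ of the open interval $\NS_{w_{k_1},k}$ is $\ge n$; hence $n$ lies at or to the left of $\NS_{w_{k_1},k}$ and is not in it. For $k''=k_1$ (which only occurs when $k_1\equiv k_\epsilon\pmod{p-1}$): here $v_p(w_{k_1}-w_{k_1})=\infty$, so $L_{w_{k_1},k_1}$ is maximal, namely $\tfrac12 d_{k_1}^{new}$, and $\NS_{w_{k_1},k_1}=\bigl(d_{k_1}^{ur}(\epsilon_1),\,d_{k_1}^{Iw}(\tilde\epsilon_1)-d_{k_1}^{ur}(\epsilon_1)\bigr)$ (empty if $d_{k_1}^{new}=0$); the hypothesis $m_{k_1}^{(\epsilon)}(n)=0$ says exactly that $n$ avoids this open interval. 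Thus both are handled.

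For the remaining auxiliary weights $k''\notin\{k,k_1\}$ I would combine three ingredients: (i) the valuation identity $v_p(w_{k_1}-w_{k''})=1+v_p(k_1-k'')$; (ii) the convexity estimate $\Delta_{k'',l}-\Delta_{k'',l-1}\ge l$ of~\eqref{boundondifDelta} together with its refinement $\Delta_{k'',l}-\Delta_{k'',l-1}\ge\tfrac32+\tfrac{p-1}2(l-1)$ coming from Lemmas~\ref{Delta'bound} and~\ref{diffDD'}, which through Definition~\ref{nearSt} turn $n\in\NS_{w_{k_1},k''}$ into $1+v_p(k_1-k'')\ge\Delta_{k'',L_0+1}-\Delta_{k'',L_0}\ge L_0+1$, where $L_0:=|n-\tfrac12 d_{k''}^{Iw}(\tilde\epsilon_1)|$, hence $|k_1-k''|\ge p^{L_0}$ since $k''\ne k_1$; and (iii) the dimension formulas of Proposition~\ref{dimform}, giving $n=d_k^{ur}(\epsilon_1)\le\tfrac{2k_\bullet}{p+1}+2$ with $k_\bullet<\tfrac{p^{x+1}}{p-1}$, so $n=O(p^{x-1})$, while $|k_1|\le p^x$ and $\tfrac12 d_{k''}^{Iw}(\tilde\epsilon_1)=k''_\bullet+1-\delta_\epsilon$. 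Putting these together forces first $L_0\le x-1$ (else $p^{L_0}$ exceeds the crude bound $|k_1-k''|\le|k_1|+\tfrac12 d_{k''}^{Iw}(\tilde\epsilon_1)+L_0=O(p^{x-1})+L_0$, impossible for $p\ge 11$), and then, using $\Delta_{k'',2}-\Delta_{k'',1}\ge\tfrac{p+2}2$, forces $L_0=0$, i.e. $\tfrac12 d_{k''}^{Iw}(\tilde\epsilon_1)=n$, which by Proposition~\ref{dimform}(2) pins $k''$ to the single weight $k^\dagger:=k_\epsilon+(p-1)(n-1+\delta_\epsilon)$, with the surviving requirement $v_p(w_{k_1}-w_{k^\dagger})\ge\Delta_{k^\dagger,1}-\Delta_{k^\dagger,0}\ (\ge 2)$. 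To kill this last case I would pass to weight $k$: since $v_p(w_{k_1}-w_k)\ge x$ (from $k\equiv k_1\pmod{p^{x-1}}$) while $n=O(p^{x-1})$, Lemma~\ref{GuMaz} together with Theorem~\ref{ghostthm} and local constancy of Newton polygons show that $\NP(G^{(\epsilon)}(w_{k_1},t))$ and $\NP(G^{(\epsilon)}(w_k,t))$ coincide in a neighbourhood of the $x$-coordinate $n$, below slope $\tfrac{k-2}2$; so it suffices that $n$ be a vertex of $\NP(G^{(\epsilon)}(w_k,t))$, and this follows from $m_k^{(\epsilon)}(n)=m_k^{(\epsilon)}(d_k^{ur}(\epsilon_1))=0$ via Theorem~\ref{NPpts}(2) and the same two reductions (to $\NS_{w_k,k}$, by the endpoint argument, and to $\NS_{w_k,k''}$, by the identical size estimates).

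The main obstacle is precisely this last step: identifying all auxiliary weights $k''$ whose near-Steinberg range could straddle $n$ and checking, uniformly for $p\ge 11$, that the valuations $v_p(w_{k_1}-w_{k''})=1+v_p(k_1-k'')$ are always too small to realise the $\Delta$-gaps that membership demands. The weights with $w_{k''}$ $p$-adically closest to $w_{k_1}$ — in particular $w_k$ itself and those $k''$ congruent to $k_1$ to a high $p$-power — are the delicate ones, and it is exactly there that a purely combinatorial bound is not enough and one must feed in the Gouvêa–Mazur-type stability of Lemma~\ref{GuMaz}.
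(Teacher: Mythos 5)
Your proposal follows essentially the same route as the paper: rephrase the statement via Theorem~\ref{NPpts}(2) as ``$n=d_k^{ur}(\epsilon_1)$ avoids all near-Steinberg ranges $\NS_{w_{k_1},k''}$'', control the gap $\Delta_{k'',\cdot}-\Delta_{k'',\cdot-1}$ through \eqref{boundondifDelta} (and Lemmas~\ref{Delta'bound}, \ref{diffDD'}), and close the argument with the Gouv\^ea--Mazur type stability of Lemma~\ref{GuMaz}. The paper's proof uses exactly these three ingredients as well, so there is no genuine methodological divergence; the paper merely packages the endgame differently, by introducing the auxiliary weight $k_t$ (the smallest weight larger than $k$ with $v_p(k_1-k_t)\ge x$) and comparing the ghost coordinates of $G^{(\epsilon)}(w_{k+p^M(p-1)},\cdot)$ with those of $G^{(\epsilon)}(w_{k_1},\cdot)$ in the critical $x$-range, rather than by first pinning down a single problematic weight $k^\dagger$.

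There is, however, a concrete weak step in your middle paragraph: the assertion that the dimension/valuation estimates ``force first $L_0\le x-1$ \dots and then \dots force $L_0=0$.'' Unwinding the bounds, $L_0<L_{w_{k_1},k''}$ gives $v_p(k_1-k'')\ge L_0$, while the dimension formula gives only $|k''|\lesssim(p-1)(n+L_0)$ with $n\lesssim \frac{2}{p+1}k_\bullet + 2 = O(p^{x-1})$; combining these yields roughly $L_0\le x$, not $x-1$. More importantly, even the refined gap $\Delta_{k'',L_0+1}-\Delta_{k'',L_0}\ge\frac{3}{2}+\frac{p-1}{2}L_0$ translates into $L_0\lesssim\frac{2x}{p-1}+O(1)$, which does \emph{not} collapse to $L_0=0$ once $x$ is large relative to $p$. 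So the reduction to a single weight $k^\dagger$ fails in general, and the subsequent invocation ``and this follows \dots by the identical size estimates'' for the pair $(w_k,k'')$ would inherit the same problem. You flag this obstacle honestly in your final paragraph and correctly locate the missing input in Lemma~\ref{GuMaz}; the fix is to apply the Gouv\^ea--Mazur stability before, not after, attempting to enumerate the offending weights --- exactly what the paper's use of $k_t$ and $w_{k+p^M(p-1)}$ accomplishes.
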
 

\begin{proof}
    First, assume the contrary that the point with $x$-coordinate $d_k^{ur}(\epsilon_1)$ is above the segment of the Newton polygon. Then by \cref{NPpts} we can take the maximal near-Steinberg range defined for $k_m$. We abbreviate $l=L_{w_{k_1},k_m}$. By \cref{NPpts} and \cref{boundondifDelta} we have 
        \begin{equation}
            x-1\geq v_p(w_{k_1}-w_{k_m})\geq \Delta_{k_m,l}-\Delta_{k_m,l-1}>l-1,
        \end{equation}
    hence we get $l<x$. Moreover, if we let $k_t$ is the smallest number larger than $k$ that satisfies $v_p(k_1-k_t)\geq x$, for $k'<k_t$, 
        \begin{equation}\label{compkk1}
            v_p(w_{k'}-w_k)=v_p(w_{k'}-w_{k_1}).
        \end{equation} 
    By the definition of $k_t$, we have that $k_t\equiv k\mod p-1$. Note that $d^{ur}_{k_t}(\epsilon_1)>\frac{1}{2}d_{k_m}^{Iw}+l$ in all cases. Also due to the definition of the variables, for $n<d_{k_t}^{ur}(\epsilon_1)$ and $m$ such that $m^{(\epsilon)}_m(n)\neq0$, 
        \begin{equation}\label{compkk'}
            v_p(w_{k+p^M(p-1)}-w_m)=v_p(w_{e+1}-w_m).
        \end{equation}
    Note that $M$ is given by \cref{GuMaz}. Now due to \cref{compkk'} and \cref{compkk1}, we get that the ghost coordinates of the ghost series associated to $G^{(\epsilon)}_{\bar r}(w_{k+p^M(p-1)})$ and $G^{(\epsilon)}_{\bar r}(w_{e+1})$ have coefficients of $t^n$ with the same $p$-adic valuation for $d_{k_m}^{Iw}-l\leq n \leq d_{k_t}^{ur}$. By our assumption that the point $(d_k^{ur}(\epsilon_1,g^{(\epsilon)}_{d_k^{ur}(\epsilon_1)}(w_{k_1}))$ is above the Newton polygon contradicts the fact that $\NP(G^{(\epsilon)}_{\bar r}(w_k))$ appears in $\NP(G^{(\epsilon)}_{\bar r}(w_{k+p^M(p-1)}))$ up to $x$ coordinate $d_k^{ur}(\epsilon)$. 
\end{proof}

\begin{lemma}\label{increase}
        Let $k_1,k$ be as in the above three cases. Then we have 
        \begin{equation}
            \sigma(v^{(\epsilon)}_k,d_{k_1}^{ur}(\epsilon_1))=v^{(\epsilon)}_{k_1}.
        \end{equation}
    \end{lemma}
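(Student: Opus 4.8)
The plan is to reduce both sides to initial segments of Newton polygons of the ghost series and then carry out the comparison of those two polygons exactly as in the proof of \cref{smallweightoconv}. Throughout we use $m(\tilde H)=1$.

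First I would set up the dictionary. By the ghost theorem \cref{ghostthm} the polygon $\NP(C^{(\epsilon)}_{\tilde H}(w_k,-))$ equals $\NP(G^{(\epsilon)}_{\bar\rho}(w_k,-))$, with slope sequence $v^{(\epsilon),\dagger}_k$; Coleman classicality cuts off its first $d_k^{Iw}(\epsilon\cdot(1\times\omega^{2-k}))$ entries as $v^{(\epsilon),Iw}_k$, and the $p$-stabilization symmetry \eqref{pstab} identifies the first $d_k^{ur}(\epsilon_1)$ of these with the $T_p$-slopes, so that $v^{(\epsilon)}_k=\sigma\bigl(v^{(\epsilon),\dagger}_k,d_k^{ur}(\epsilon_1)\bigr)$, and likewise for $k_1$. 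Since $k_1\equiv k\bmod (p-1)$ and $k_1<k$, part (3) of \cref{dimform} (its right-hand side being nondecreasing in $k_\bullet$) gives $d_{k_1}^{ur}(\epsilon_1)\le d_k^{ur}(\epsilon_1)$, so the lemma is reduced to: $\NP(G^{(\epsilon)}(w_k,-))$ and $\NP(G^{(\epsilon)}(w_{k_1},-))$ have the same first $d_{k_1}^{ur}(\epsilon_1)$ slopes. (For $k_1=2$ one reads $v^{(\epsilon)}_2$ with the convention from the end of the proof of the main theorem.)

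Next I would compare the two ghost polygons on $[0,d_{k_1}^{ur}(\epsilon_1)]$. In each of the three cases of \cref{varslopeconj} one has $(p-1)p^{x-1}\mid k-k_1$, so $k\equiv k_1\bmod p^{x-1}$; using $v_p(w_k-w_{k'})=1+v_p(k-k')$, and noting that a weight $k'$ with $m^{(\epsilon)}_n(k')\neq 0$ for some $n\le d_{k_1}^{ur}(\epsilon_1)$ must have $d_{k'}^{ur}(\epsilon_1)<n$, hence $2\le k'<k_1\le p^{x}$ by monotonicity of $d^{ur}$, one gets $v_p(w_k-w_{k'})=v_p(w_{k_1}-w_{k'})$ — and therefore $v_p(g^{(\epsilon)}_n(w_k))=v_p(g^{(\epsilon)}_n(w_{k_1}))$ — for all such $n$, with the only possible exceptions coming from the finitely many $k'\equiv k_1\bmod p^{x-1}$ lying below $k_1$. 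For these exceptional $k'$ I would argue as in the proof of \cref{smallweightoconv}: the nesting of near-Steinberg ranges (\cref{NPpts}) together with the bound \eqref{boundondifDelta} shows that the near-Steinberg range of such a $k'$ at $w_{k_1}$ does not reach $d_{k_1}^{ur}(\epsilon_1)$, so $\bigl(d_{k_1}^{ur}(\epsilon_1),v_p(g^{(\epsilon)}_{d_{k_1}^{ur}(\epsilon_1)}(w_{k_1}))\bigr)$ is a vertex of $\NP(G^{(\epsilon)}(w_{k_1},-))$ (also a consequence of \cref{someNPpts} at $k_1$ plus \eqref{pstab}); and \cref{smallweightoconv} for the pair $(k_1,k)$ puts $\bigl(d_k^{ur}(\epsilon_1),v_p(g^{(\epsilon)}_{d_k^{ur}(\epsilon_1)}(w_{k_1}))\bigr)$ on $\NP(G^{(\epsilon)}(w_{k_1},-))$. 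Feeding this into the argument of \cref{smallweightoconv} — with $w_k$ playing the role that $w_{k+p^M(p-1)}$ plays there, and \cref{GuMaz} used to embed both polygons into $\NP(G^{(\epsilon)}(w_{k+p^M(p-1)},-))$ up to $x$-coordinate $d_{k_1}^{ur}(\epsilon_1)$ — gives the equality of the initial slope sequences, which is the lemma.

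The main obstacle is the handful of weights $k'\equiv k_1\bmod p^{x-1}$ immediately below $k_1$: for these $v_p(w_k-w_{k'})$ and $v_p(w_{k_1}-w_{k'})$ genuinely differ, yet they do contribute to $g^{(\epsilon)}_n$ for $n$ slightly below $d_{k_1}^{ur}(\epsilon_1)$, so one must check that the discrepancy neither inserts a spurious vertex between the two endpoints being matched nor moves $d_{k_1}^{ur}(\epsilon_1)$ off the polygon — precisely what the near-Steinberg machinery of \cref{NPpts}, \cref{someNPpts} and \cref{smallweightoconv} is designed to control. The rest is routine bookkeeping with the dimension formulas of \cref{dimform} and with the valuations $v_p(w_k-w_{k'})$.
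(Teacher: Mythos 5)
Your proposal takes a genuinely different route from the paper's, and I think it has a gap at the key step. The paper does not compare $\NP(G^{(\epsilon)}(w_k,-))$ and $\NP(G^{(\epsilon)}(w_{k_1},-))$ directly. Instead it runs a two-step induction: it observes that in cases 2 and 3 the intermediate weight $k_2=k-p^{x-1}(p-1)<k$ has the same associated $k_1$, so the induction hypothesis (the main theorem for weights below $k$) already gives $\sigma(v^{(\epsilon)}_{k_2},d_{k_1}^{ur}(\epsilon_1))=v^{(\epsilon)}_{k_1}$; it then transports this to $v^{(\epsilon)}_k$ using the coordinate comparisons of \cref{firstk_1var}, \cref{midtermscase2}, \cref{midtermcase3}, with a separate \cref{smallweightoconv}-style argument only for the sub-case $z=p$, $y=p-1$. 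Case 1 is simply delegated to \S\ref{case 1}. Your plan skips the induction and the intermediate weight $k_2$ entirely, which is a real structural difference.

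Two concrete issues. First, your worry about ``exceptional $k'\equiv k_1\bmod p^{x-1}$ below $k_1$'' is spurious: if $2\le k'<k_1$ with $k'\equiv k_1\equiv k\bmod(p-1)$, then the combination $p^x\mid k-k'$ and $(p-1)\mid k-k'$ forces $k-k'\ge(p-1)p^x$, which (as the paper checks in the proof of \cref{firstk_1} and \cref{firstk_1var}) pushes $k'$ below $2$ in every one of the three cases; so in fact $v_p(w_k-w_{k'})=v_p(w_{k_1}-w_{k'})$ for \emph{all} contributing $k'$, no near-Steinberg repair needed. Second, and more seriously, the final step is not justified: \cref{GuMaz} only relates $\NP(C^{(\epsilon)}(w_k,-))$ to $\NP(C^{(\epsilon)}(w_{k+p^N(p-1)},-))$, and since $k_1\not\equiv k\bmod p^N$ for $N$ large, it does not embed $\NP(G^{(\epsilon)}(w_{k_1},-))$ into $\NP(G^{(\epsilon)}(w_{k+p^M(p-1)},-))$, so you cannot ``embed both polygons'' into one ambient polygon as stated. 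What you actually need is that $(d_{k_1}^{ur}(\epsilon_1),v_p(g^{(\epsilon)}_{d_{k_1}^{ur}(\epsilon_1)}(w_k)))$ is a vertex of $\NP(G^{(\epsilon)}(w_k,-))$, i.e.\ a vertex statement \emph{at $w_k$}, not at $w_{k_1}$; your appeal to \cref{smallweightoconv} for the pair $(k_1,k)$ only produces a vertex on $\NP(G^{(\epsilon)}(w_{k_1},-))$. This is precisely the point that the paper's detour through $k_2$ and the induction hypothesis is designed to settle, so the substitution does not come for free.
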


    \begin{proof}
        We note that this lemma is proved along with the induction steps of the proof of the theorem. In other words, we prove the lemma for a given $k$ where we are assuming the main theorem holds for $k'<k$. This is possible as when we prove the main theorem for a given $k$, we only need this lemma for the specified $k$.\\
        For $k_1,k$ in case 1, this follows from \S\ref{case 1}. Now assume that $k$ is in case 2 or 3. We divide into cases.
            \begin{enumerate}
                \item when $z\neq p$: $k_2=(y-1)p^x+(z+1)p^{x-1}+t+1$. Hence by the induction hypothesis, we get 
                    \begin{equation}
                        \sigma(v^{(\epsilon)}_{k_2},d_{k_1}^{ur}(\epsilon_1))=v^{(\epsilon)}_{k_1}.
                    \end{equation}
                This implies that 
                    \begin{equation}
                        \sigma(v^{(\epsilon)}_k,d_{k_1}^{ur}(\epsilon_1))=v^{(\epsilon)}_{k_1}
                    \end{equation} 
                since we have proved \cref{firstk_1var} and \cref{midtermscase2}, \cref{midtermcase3}.
                \item when $z=p$ and $y\neq p-1$: $k_2=yp^x+t+1$ and $k_1=yp^{x-1}+t+1$. Then $k_2$ falls in case 1, and hence we have
                    \begin{equation}
                        \sigma(v^{(\epsilon)}_{k_2},d_{k_1}^{ur}(\epsilon_1))=v^{(\epsilon)}_{k_1}.
                    \end{equation}
                again implying that 
                    \begin{equation}
                        \sigma(v^{(\epsilon)}_k,d_{k_1}^{ur}(\epsilon_1))=v^{(\epsilon)}_{k_1}.
                    \end{equation}
                \item when $z=p$ and $y=p-1$: $k_2=(p-1)p^x+t+1$ and $k_1=(p-1)p^{x-1}+t+1$. Using Theorem~\ref{smallweightoconv}, we get both $d_{k_1}^{ur}(\epsilon_1)$ and $d_{k_2}^{ur}(\epsilon_1)$ are vertices of the Newton polygon of $G^{(\epsilon)}_{\bar r}(w_{t+1})$, and the slopes appearing in $v^{(\epsilon)}_{k_1}$ and $v^{(\epsilon)}_{t+1}$ coincide outside the range $(d_{t+1}^{ur}(\epsilon_1),d_{t+1}^{Iw}(\tilde\epsilon_1)-d_{t+1}^{ur}(\epsilon_1))$ by \cref{firstk_1var}, and the same holds for $v^{(\epsilon)}_{k_1}$. For the slopes in the range $(d_{t+1}^{ur}(\epsilon_1),d_{t+1}^{Iw}(\tilde\epsilon_1)-d_{t+1}^{ur}(\epsilon_1))$, they follow the algorithm above, by the induction hypothesis, hence take values that are between $\max(v^{(\epsilon)}_{t+1})$ and $k_1-1-\max(v^{(\epsilon)}_{t+1})$. Hence we again have 
                    \begin{equation}
                        v^{(\epsilon)}_{k_2}[d_{k_1}^{ur}(\epsilon_1)+1]\geq v^{(\epsilon)}_{k_1}[d_{k_1}^{ur}(\epsilon_1)].
                    \end{equation}
                implying the result along with Lemma~\ref{firstk_1var} and Lemma~\ref{midtermcase3}.\qedhere
            \end{enumerate}
    \end{proof}

    \begin{remark}
        Note that Lemma~\ref{increase} logically depends on \S\ref{case 1} and Lemma~\ref{firstk_1var}, Lemma~\ref{midtermcase3}, and Lemma~\ref{midtermscase2} where Lemma~\ref{midtermscase2} is only dependent on Theorem~\ref{smallweightoconv}, hence this is not a circular logic. The order of the statements has been arranged in this fashion to maximize legibility. The following diagram shows the logical dependence.
\[\begin{tikzcd}[column sep=tiny,row sep=huge]
	&& {\text{Lemma }5.3} \\
	& {\text{Theorem 5.4}} \\
	\\
	{\text{Theorem 5.13}} &&& {\text{Lemma 5.10}} & {\text{Lemma 5.11}} & {\text{Lemma 5.12}} \\
	{\text{Lemma 5.7}} && {\text{Theorem 5.5}} \\
	{\text{Lemma 5.8}} \\
	{\text{Lemma 5.9}} && {\text{ Case 1}} & {\text{ Case 2}} & {\text{ Case 3}} \\
	&& {\text{Theorem 5.2}}
	\arrow[from=1-3, to=2-2]
	\arrow[from=2-2, to=4-1]
	\arrow[from=4-4, to=5-3]
	\arrow[from=4-5, to=5-3]
	\arrow[from=4-6, to=5-3]
	\arrow[from=2-2, to=7-3]
	\arrow[from=4-1, to=7-3]
	\arrow[from=4-1, to=7-4]
	\arrow[from=4-1, to=7-5]
	\arrow[from=5-3, to=7-4]
	\arrow[from=5-3, to=7-5]
	\arrow[from=4-4, to=7-4]
	\arrow[from=4-5, to=7-4]
	\arrow[from=4-6, to=7-5]
	\arrow[from=2-2, to=7-4]
	\arrow[from=7-3, to=8-3]
	\arrow[from=7-4, to=8-3]
	\arrow[from=7-5, to=8-3]
	\arrow[from=6-1, to=7-3]
	\arrow[from=7-1, to=7-3]
	\arrow[from=5-1, to=7-3]
\end{tikzcd}\]
    \end{remark}

In the next three sections, we will prove the lemmas cited above along with elaboration on the arguments. To understand the details of the proof, we recommend the reader to read the outline first and the following sections after one has become familiar of the structure of the proof.
\subsection{When $y+z\leq p-1$}\label{case 1}
Consider the notation above, 
    \begin{equation*}
        \begin{split}
            & k_1=k-y(p-1)p^{x-1}=(y+z-1)p^{x-1}+t+1\\
        & k_2=(p-y-z)p^{x-1}+t+1.
        \end{split}
    \end{equation*} 
The Ghost series for $\bar r$ is given by 
    \begin{equation*}
        G^{(\epsilon)}_{\bar r}(w,t)=\sum_{n\geq0}g^{(\epsilon)}_n(w)(t)=1+\sum_{n\geq0}\prod_{l\equiv k\mod p-1}(w-w_l)^{m^{(\epsilon)}_n(l)}t^n
    \end{equation*}
We have the following lemma.
    \begin{lemma}\label{firstk_1}
        For $0\leq n\leq d_{k_1}^{ur}(\epsilon_1)$, 
            \begin{equation*}
                v_p(g^{(\epsilon)}_n(w_{k_1}))=v_p(g^{(\epsilon)}_n(w_k)),
            \end{equation*} 
        hence $(n,v_p(\prod_{l\equiv k\mod p-1}(w_{k_1}-w_l)^{m^{(\epsilon)}_n(l)})$ appears as the first $d_{k_1}^{ur}(\epsilon_1)+1$ points of the ghost coordinates of $k$.
    \end{lemma}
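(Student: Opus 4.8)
\emph{Approach.}
I would prove this by comparing the two products factor by factor. Recall $g_n^{(\epsilon)}(w)=\prod_{l}(w-w_l)^{m_n^{(\epsilon)}(l)}$, the product running over weights $l\geq 2$ with $l\equiv k_\epsilon\pmod{p-1}$, and recall that for the weight coordinate one has $v_p(w_a-w_b)=1+v_p(a-b)$ (only its strict monotonicity in $v_p(a-b)$ will be used). Thus
\[
v_p\bigl(g_n^{(\epsilon)}(w_{k_1})\bigr)-v_p\bigl(g_n^{(\epsilon)}(w_k)\bigr)=\sum_{l}m_n^{(\epsilon)}(l)\bigl(v_p(k_1-l)-v_p(k-l)\bigr),
\]
so it suffices to show that $v_p(k_1-l)=v_p(k-l)$ for every $l$ with $m_n^{(\epsilon)}(l)\ne 0$ and $0\leq n\leq d_{k_1}^{ur}(\epsilon_1)$.

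\emph{Key steps.}
First, $m_n^{(\epsilon)}(l)\ne 0$ forces $d_l^{ur}(\epsilon_1)<n\leq d_{k_1}^{ur}(\epsilon_1)$; since $d_k^{ur}(\epsilon_1)$ is non-decreasing along the progression $k\equiv k_\epsilon\pmod{p-1}$ (immediate from the floor formula in \cref{dimform}), this gives $2\leq l<k_1$. Second, in this case $k-k_1=y(p-1)p^{x-1}$ with $1\leq y\leq p-2$, so $v_p(k-k_1)=x-1$. Suppose $v_p(k_1-l)\geq x-1$; since also $(p-1)\mid(k_1-l)$ we get $(p-1)p^{x-1}\mid(k_1-l)$, and as $k_1-l>0$ this forces $l\leq k_1-(p-1)p^{x-1}$. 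But $k_1=(y+z-1)p^{x-1}+t+1$ with $1\leq t\leq p^{x-1}$, so the Case-1 hypothesis $y+z\leq p-1$ yields $l\leq (y+z-p)p^{x-1}+t+1\leq 1<2$, a contradiction. Hence $v_p(k_1-l)\leq x-2<v_p(k-k_1)$, and the ultrametric inequality gives $v_p(k-l)=v_p(k_1-l)$. Therefore the displayed sum vanishes, which is exactly the asserted equality $v_p(g_n^{(\epsilon)}(w_{k_1}))=v_p(g_n^{(\epsilon)}(w_k))$; the concluding sentence of the lemma is then the index-by-index restatement of this for $n=0,\dots,d_{k_1}^{ur}(\epsilon_1)$.

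\emph{Main obstacle.}
There is no genuine analytic difficulty here: the content is entirely the size estimate in the second step, and the crux is recognizing that the Case-1 hypothesis $y+z\leq p-1$ is precisely what pushes every weight $l$ lying $p^{x-1}$-adically close to $k_1$ out of the admissible window $[2,k_1)$, so that only $l$ with $v_p(k-l)=v_p(k_1-l)$ survive. The two points needing care are the exact bound $1\leq t\leq p^{x-1}$ and the monotonicity of $d_k^{ur}(\epsilon_1)$, both of which are read off from \cref{dimform} and the recipe for $x,y,z,t$ in \cref{varslopeconj}. (When $x=1$ the argument degenerates harmlessly: then $k_1=k_\epsilon$ is the least weight in its residue class, and the first step already shows no admissible $l$ exists.)
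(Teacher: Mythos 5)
Your proof is correct and follows the same strategy as the paper: reduce to showing that $v_p(k-l)=v_p(k_1-l)$ for every weight $l$ that can contribute to $g_n^{(\epsilon)}$ with $n\leq d_{k_1}^{ur}(\epsilon_1)$, using $v_p(k-k_1)=x-1$ and the ultrametric inequality. Where the paper disposes of the comparison in one terse clause (``$k\equiv l\bmod p^x$ and $k\equiv l\bmod p-1$ is impossible as $k_1\leq p^x$''), you pin down the sharper bound $v_p(k_1-l)\leq x-2$ from the explicit expression $k_1=(y+z-1)p^{x-1}+t+1$ together with the Case-1 inequality $y+z\leq p-1$ and $1\leq t\leq p^{x-1}$; this makes the borderline case $v_p(k_1-l)=x-1$ vacuous rather than handling it separately, so your version is a slightly more self-contained rendering of the same argument.
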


    \begin{proof}
        For each $l<k_1$, we have 
            \begin{equation*}
                v_p(w_k-w_l)=v_p(w_{k_1}-w_l)
            \end{equation*}
        since 
        \begin{equation*}
            k\equiv l\mod p^x\text{  and  }k\equiv l\mod p-1
        \end{equation*}
        is impossible as $k_1\leq p^x$. Also, for $n\leq d_{k_1}^{ur}(\epsilon_1)$, $d_l^{ur}<n<d_l^{Iw}-d_l^{ur}$ is necessary for $m^{(\epsilon)}_n(l)$ to be nonzero hence $l<k_1$. Thus 
            \begin{equation}
                v_p\bigg(\prod_{l\equiv k\mod p-1}(w_{k_1}-w_l)^{m^{(\epsilon)}_n(l)}\bigg)=v_p\bigg(\prod_{l\equiv k\mod p-1}(w_{k}-w_l)^{m^{(\epsilon)}_n(l)}\bigg)
            \end{equation}
        and the points are identical.
    \end{proof}

We have another lemma comparing the ghost coordinates for $k$ with another space of modular forms.
    \begin{lemma}\label{first e+1}
        For $n\leq d_{e+1}^{Iw}(\epsilon\cdot(1\times\omega^{1-e}))$,
            \begin{equation}
                v_p\big(g^{(\epsilon)}_n(w_{e+1})\big)=v_p\big(g^{(\epsilon)}_n(w_k)\big).
            \end{equation}
        In other words, the ghost coordinates of the space $S_{\tilde H,e+1}^{Iw}(\epsilon\cdot(1\times\omega^{1-e}))$ is identical to the first $d_{e+1}^{Iw}(\epsilon\cdot(1\times\omega^{1-e}))$ ghost coordinates of the ghost series for $k$.
    \end{lemma}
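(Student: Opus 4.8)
The plan is to follow the proof of Lemma~\ref{firstk_1}. Since $g_n^{(\epsilon)}(w)=\prod_{l\ge 2,\ l\equiv k_\epsilon\bmod p-1}(w-w_l)^{m_n^{(\epsilon)}(l)}$ and $v_p(w_a-w_b)=1+v_p(a-b)$ whenever $a\equiv b\bmod p-1$, it is enough to show $v_p(k-l)=v_p\big((e+1)-l\big)$ for every $l\ge 2$ with $l\equiv k_\epsilon\bmod p-1$ such that $m_n^{(\epsilon)}(l)\ne 0$ for some $n\le d_{e+1}^{Iw}(\epsilon\cdot(1\times\omega^{1-e}))$. First I would unwind the definitions of $B$ and $e$ from \S\ref{case 1} to record that $e=t$, so $e+1=t+1$ with $1\le t\le p^{x-1}$, and that $k-(e+1)=yp^x+(z-1)p^{x-1}=p^{x-1}(yp+z-1)$; in particular $k\equiv e+1\bmod p^{x-1}$, whence $k-l\equiv (e+1)-l\bmod p^{x-1}$ for all $l$, and the desired equality of valuations is automatic as soon as $v_p\big((e+1)-l\big)\le x-2$.

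So the content lies with the $l$ for which $v_p\big((e+1)-l\big)\ge x-1$. Any such $l$ is $\equiv k$ modulo $p^{x-1}$, and (occurring in the product) also $\equiv k_\epsilon\equiv k$ modulo $p-1$, so by the Chinese remainder theorem $l\equiv k\bmod p^{x-1}(p-1)$. Writing $l=k-jp^{x-1}(p-1)=(y-j)p^x+(z-1+j)p^{x-1}+t+1$ and using $1\le t\le p^{x-1}$ together with the case hypothesis $y+z\le p-1$ (which forces $1\le y\le p-2$), one checks that the only such $l$ with $2\le l\le p^x$ is $l=k_1=k-y(p-1)p^{x-1}$, while every other such $l$ exceeds $p^x$. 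For $l=k_1$ the two sides agree: $v_p\big((e+1)-k_1\big)=v_p\big((y+z-1)p^{x-1}\big)=x-1$ and $v_p(k-k_1)=v_p\big(y(p-1)p^{x-1}\big)=x-1$, because $y+z-1$ and $y(p-1)$ are prime to $p$. For the $l>p^x$ I would instead prove $m_n^{(\epsilon)}(l)=0$ for all $n\le d_{e+1}^{Iw}(\epsilon\cdot(1\times\omega^{1-e}))$, so these $l$ drop out entirely; since $m_n^{(\epsilon)}(l)\ne 0$ forces $n>d_l^{ur}(\epsilon_1)$ and $d_\bullet^{ur}$ is nondecreasing along the progression, it suffices to prove
\[
d_{l_\star}^{ur}(\epsilon_1)\ \ge\ d_{e+1}^{Iw}\big(\epsilon\cdot(1\times\omega^{1-e})\big),\qquad l_\star:=k_1+p^{x-1}(p-1)=p^x+(z+y-2)p^{x-1}+t+1,
\]
the smallest element of the progression exceeding $p^x$.

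I expect this inequality to be the main obstacle. Both sides are the explicit floor expressions of Proposition~\ref{dimform}, and at leading order the estimate is favourable: $l_\star-2\ge p^x+(e-1)$ while $d_{e+1}^{Iw}$ is controlled by $e-1=t-1\le p^{x-1}-1$, so the weight governing $d_{l_\star}^{ur}$ is of order $p$ times the one governing $d_{e+1}^{Iw}$. But the character-dependent shifts $t_1,t_2,s,\delta_\epsilon$ and the floor losses eat the slack when $t$ is near $p^{x-1}$, so one must use the formulas exactly, and handle the small exponents $x=1,2$ (where the relevant weights are small) by a short direct computation, using $p\ge 11$, $a\in\{1,\dots,p-4\}$, $s_\epsilon\in\{0,\dots,p-2\}$; in several of these cases the inequality is an equality, in accordance with the case split of Algorithm~\ref{varslopeconj}. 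Finally I would note that $l=e+1$ never contributes, since $e+1=t+1\not\equiv k_\epsilon\bmod p-1$: indeed $k\equiv y+z+t\bmod p-1$ and $k\equiv k_\epsilon$, so $t+1\equiv k_\epsilon$ would give $y+z\equiv 1\bmod p-1$, impossible as $2\le y+z\le p-1$. Hence no factor $(w-w_l)^{m}$ of $g_n^{(\epsilon)}$ vanishes at $w=w_{e+1}$, and summing the matched factor valuations over $l$ gives $v_p(g_n^{(\epsilon)}(w_{e+1}))=v_p(g_n^{(\epsilon)}(w_k))$ for all $n\le d_{e+1}^{Iw}(\epsilon\cdot(1\times\omega^{1-e}))$, which is the assertion.
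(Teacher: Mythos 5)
Your proposal follows the same strategy as the paper's proof: identify that $e+1\le p^{x-1}$ (equivalently $v_p\big(k-(e+1)\big)\ge x-1$) forces $v_p(w_k-w_l)=v_p(w_{e+1}-w_l)$ for every $l$ that actually contributes to $g_n^{(\epsilon)}$ in the range $n\le d_{e+1}^{Iw}\big(\epsilon\cdot(1\times\omega^{1-e})\big)$. Where you go further than the paper is in pinning down exactly which $l$ are problematic: you correctly reduce to the progression $l\equiv k\bmod p^{x-1}(p-1)$, handle $l=k_1$ by a direct valuation computation (both sides $=x-1$), correctly observe that $l=e+1$ itself never occurs in the product since $y+z\not\equiv 1\bmod p-1$, and then observe that the remaining $l$ all exceed $l_\star=k_1+p^{x-1}(p-1)>p^x$ and hence would be killed if $d_{l_\star}^{ur}(\epsilon_1)\ge d_{e+1}^{Iw}\big(\epsilon\cdot(1\times\omega^{1-e})\big)$.

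That inequality is the gap: you state it as "the main obstacle," sketch that it should hold at leading order with $t\le p^{x-1}$, but do not verify it against the floor formulas of Proposition~\ref{dimform}. Since $m_n^{(\epsilon)}(l)\ne 0$ forces $d_l^{ur}(\epsilon_1)<n$ and $d^{ur}$ is nondecreasing along the arithmetic progression, your reduction is the right one, but without the inequality the argument is incomplete — and it is genuinely delicate, since (as your own $x=1$ analysis and the case split of Algorithm~\ref{varslopeconj} suggest) it is an equality in edge cases, so the character shifts $t_1,t_2,\delta_\epsilon$ and $s_\epsilon,\{a+s_\epsilon\}$ cannot be dropped. For what it is worth, the paper's own proof of this lemma is equally terse at precisely this point: it asserts ``hence we again have $v_p(w_k-w_l)=v_p(w_{e+1}-w_l)$ for $d_l^{ur}(\epsilon_1)\le d_{e+1}^{Iw}(\epsilon\cdot(1\times\omega^{1-e}))$'' with the only justification being $e+1\le p^{x-1}$, which is exactly the claim your unproved inequality is meant to substantiate. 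So your write-up is structurally faithful to (and in fact more explicit than) the paper's argument, but to count as a complete proof you would still need to carry out the comparison of $d_{l_\star}^{ur}$ and $d_{e+1}^{Iw}$ using Proposition~\ref{dimform}.
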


    \begin{proof}
        We prove this again by comparing the $p$-adic valuations of the ghost coefficients of the ghost series $G^{(\epsilon)}_{\bar r}(w,t)$ when $w=w_{e+1}$ and $w=w_k$. \\
        Note that $e+1=k-B+1=t+1\leq p^{x-1}$. Hence we again have 
        \begin{equation*}
            v_p(w_k-w_l)=v_p(w_{e+1}-w_l)
        \end{equation*}
        for $d^{ur}_l(\epsilon_1)\leq d_{e+1}^{Iw}(\epsilon\cdot(1\times\omega^{1-e}))$. If $m^{(\epsilon)}_l(n)>0$, then $d_l^{ur}(\epsilon_1)<n<d_l^{Iw}(\tilde\epsilon_1)-d_l^{ur}(\tilde\epsilon_1)$ and hence we get $v_p(w_k-w_l)=v_p(w_{e+1}-w_l)$ implying the desired result.
    \end{proof}
Hence, applying \cref{smallweightoconv} for $(e+1,k)$ and $(e+1,k_1)$(meaning that we are letting the $k_1,k$ in \cref{smallweightoconv} as the tuples specified), we get that 
    \begin{equation}
        \begin{split}
            \sigma(v^{(\epsilon),\dagger}_{e+1},d_{k_1}^{ur}(\epsilon_1))=v^{(\epsilon)}_{k_1}\\
        \sigma(v^{(\epsilon),\dagger}_{e+1},d_{k}^{ur}(\epsilon_1))=v^{(\epsilon)}_{k}
        \end{split}
    \end{equation}
and combining the two, we get 
    \begin{equation}\label{first k_1 appears firstcase}
        \sigma(v^{(\epsilon)}_{k},d_{k_1}^{ur}(\epsilon_1))=v^{(\epsilon)}_{k_1}.
    \end{equation}
Now we use the property of the ghost series being compatible with the Atkin-Lehner involution \cref{atkinlehner}. We have 
    \begin{equation}\label{atkin lehnerfor e+1}
        v^{(\epsilon),Iw}_{e+1}[i]+v^{(\epsilon''),Iw}_{e+1}[d-i+1]=e
    \end{equation}
where $s_{\epsilon''}=e-1-a-s_\epsilon$ and $d=\dim S_{\tilde H,e+1}^{Iw}(\epsilon\cdot(1\times\omega^{1-e}))$. From now on, we denote $\epsilon_1''$ for the corresponding first factor of $\epsilon''$ as we write $\epsilon_1$ for $\epsilon$ a character of $(\FF_p^\times)^2$.(Note that we will use the same notation for another character $\epsilon'$ later in this section)
 Now we state another lemma.

    \begin{lemma}\label{atkinlehner k_2}
        For $n\leq d_{k_2}^{ur}(\epsilon_1'')$, we have 
            \begin{equation}
                v_p(g^{(\epsilon'')}_n(w_{k_2}))=v_p(g^{(\epsilon'')}_n(w_{e+1})).
            \end{equation}
    \end{lemma}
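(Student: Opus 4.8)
\textbf{Proof proposal for Lemma~\ref{atkinlehner k_2}.}

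The plan is to argue exactly as in the proofs of Lemma~\ref{firstk_1} and Lemma~\ref{first e+1}: namely, to compare the two ghost series $G^{(\epsilon'')}_{\bar r}(w,t)$ evaluated at $w=w_{k_2}$ and $w=w_{e+1}$ term by term, and to show that for each $n\leq d_{k_2}^{ur}(\epsilon_1'')$, every factor $(w-w_l)^{m_n^{(\epsilon'')}(l)}$ that actually contributes (i.e. with $m_n^{(\epsilon'')}(l)\neq 0$) has the same $p$-adic valuation at $w=w_{k_2}$ as at $w=w_{e+1}$. Since $v_p(g^{(\epsilon'')}_n(w_{k_2}))$ and $v_p(g^{(\epsilon'')}_n(w_{e+1}))$ are the corresponding sums $\sum_l m_n^{(\epsilon'')}(l)\,v_p(w_{\bullet}-w_l)$, the per-factor equality yields the lemma.

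First I would record the relevant weights: from the definitions in \S\ref{case 1}, $k_2=(p-y-z)p^{x-1}+t+1$ while $e+1=t+1$, so $k_2\equiv e+1\pmod{p^{x-1}}$ and $k_2-(e+1)=(p-y-z)p^{x-1}$. Consequently $v_p(w_{k_2}-w_{e+1})\geq x-1$, while $k_2\leq p^x$ (using $y+z\leq p-1$ and $t+1\leq p^{x-1}$), which is the analogue of the bound ``$k_1\leq p^x$'' used in Lemma~\ref{firstk_1}. Then for any $l$ with $m_n^{(\epsilon'')}(l)\neq0$ and $n\leq d_{k_2}^{ur}(\epsilon_1'')$, the constraint $d_l^{ur}(\epsilon_1'')<n<d_l^{Iw}(\tilde\epsilon_1'')-d_l^{ur}(\epsilon_1'')$ forces $l<k_2\leq p^x$; since $l$ and $k_2$ and $e+1$ are all $\leq p^x$ in absolute value and congruent modulo $p^{x-1}$, one cannot have simultaneously $l\equiv k_2\pmod{p^{x-1}}$-type coincidences that would separate $v_p(w_{k_2}-w_l)$ from $v_p(w_{e+1}-w_l)$: concretely, $v_p(w_{k_2}-w_l)=v_p(w_{e+1}-w_l)=v_p(l-(t+1))$ whenever $v_p(l-(t+1))<x-1$, and the range of contributing $l$ guarantees this. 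This gives the per-factor equality and hence the lemma.

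The only subtle point—and the step I expect to require the most care—is verifying that the \emph{relevant} range of indices $l$ (those with $m_n^{(\epsilon'')}(l)\neq 0$ for some $n\leq d_{k_2}^{ur}(\epsilon_1'')$) really does stay below $k_2$, so that no $l$ with $v_p(l-(t+1))\geq x-1$ ever contributes; this is where one must invoke the dimension formulas of Proposition~\ref{dimform} to pin down $d_{k_2}^{ur}(\epsilon_1'')$ and $d_l^{Iw}(\tilde\epsilon_1'')$ precisely, together with the fact that $s_{\epsilon''}=e-1-a-s_\epsilon$ was chosen exactly so that $k_2$ and $e+1$ are the ``Atkin--Lehner paired'' weights for the character $\epsilon''$. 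Once that bookkeeping is in place the argument is identical in form to Lemma~\ref{first e+1}, so I would phrase the write-up as ``the same computation as in Lemma~\ref{first e+1}, now with $(\epsilon'',k_2)$ in place of $(\epsilon,e+1)$'' and only spell out the weight congruence $k_2\equiv t+1\pmod{p^{x-1}}$ and the bound $k_2\leq p^x$ in detail.
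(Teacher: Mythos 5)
Your proposal matches the paper's argument in both structure and substance: compare $v_p(g^{(\epsilon'')}_n(w_{k_2}))$ and $v_p(g^{(\epsilon'')}_n(w_{e+1}))$ factor by factor, observe $k_2=(p-y-z)p^{x-1}+t+1$ and $e+1=t+1$ so $v_p(k_2-(e+1))=x-1$ while $k_2\leq p^x$, and use the constraint $d_l^{ur}(\epsilon_1'')<n\leq d_{k_2}^{ur}(\epsilon_1'')$ to force contributing indices $l<k_2$, whence $v_p(w_{k_2}-w_l)=v_p(w_{e+1}-w_l)$; the paper itself omits these details as ``repetitive,'' and you supply them. One caution in the write-up: the phrase ``$l$ and $k_2$ and $e+1$ \ldots congruent modulo $p^{x-1}$'' should apply only to $k_2$ and $e+1$ (a general contributing $l$ is only constrained modulo $p-1$), and the claim that contributing $l$ all satisfy $v_p(l-(t+1))<x-1$ needs the quick check that the only weights $l\equiv k_{\epsilon''}\bmod (p-1)$ with $v_p(l-(t+1))\geq x-1$ in the interval are $l<2$ or $l=k_2$, both excluded — once stated, the argument is exactly the one the paper intends.
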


    \begin{proof}
        This amounts to proving that the first $d_{k_2}^{ur}(\epsilon_1')$ terms in the ghost series $G^{(\epsilon'')}_{\bar r}(w,t)$ with $w=w_{k_2}$ and $w=w_{e+1}$ have the same $p$-adic valuation. This is true as $e+1=t+1$ and $k_2=(p-y-z)p^{x-1}+t+1$ and hence there is no $l$ such that $l\equiv k_2\mod p-1$ and $v_p(k_2-l)=x$. (Details omitted due to repetitive arguments.)
    \end{proof}

Hence we deduce that the first part of the algorithm gives us the first $d_{e+1}^{Iw}(\epsilon\cdot(1\times\omega^{1-e}))$ slopes of the space $S_{\tilde H,k}^{ur}(\epsilon_1)$.\\
\\
By Theorem~\ref{someNPpts}, we get $v^{(\epsilon),Iw}_{e+1}$ appears as the first terms of $v^{(\epsilon),\dagger}_{e+1}$. By Theorem~\ref{smallweightoconv}, we get that $v^{(\epsilon)}_k$ is a truncation of $ v^{(\epsilon),\dagger}_{e+1}$ up to the $d_k^{ur}(\epsilon_1)$'th term. Hence we get that $v^{(\epsilon),Iw}_{e+1}$ appears as the first $d_{1+e}^{Iw}(\epsilon\cdot(1\times\omega^{1-e}))$ slopes of $S_{\tilde H,k}^{ur}(\epsilon_1)$. Formally, this means
    \begin{equation}
        v^{(\epsilon),Iw}_{e+1}=\sigma(v^{(\epsilon)}_k,d_{e+1}^{Iw}(\tilde\epsilon))
    \end{equation}
Moreover, using Theorem~\ref{smallweightoconv} for $k_1$ and $k$, the first $d_{k_1}^{ur}(\epsilon_1)$ slopes of $S_{\tilde H,k_1}^{ur}(\epsilon_1)$ give the first $d_{k_1}^{ur}(\epsilon_1)$ slopes of $S_{\tilde H,k}^{ur}(\epsilon_1)$, i.e., 
    \begin{equation}
        \sigma(v^{(\epsilon)}_k,d_{k_1}^{ur}(\epsilon_1))=v^{(\epsilon)}_{k_1}.
    \end{equation}
The rest come from taking $e$ minus the ones from $S_{\tilde H,k_2}^{ur}(\epsilon'_1)$ with reverse order as we have 
    \begin{equation}
        v^{(\epsilon),Iw}_{e+1}[d-i+1]=e-v^{(\epsilon'')}_{k_2}[i]
    \end{equation}
from \cref{atkin lehnerfor e+1} and since THeorem~\ref{smallweightoconv} with Lemma~\ref{atkinlehner k_2} gives us that $t^{(\epsilon'')}_{k_2}$ equals $\sigma(t^{(\epsilon''),Iw}_{e+1},d_{k_2}^{ur}(\epsilon''_1))$.
The fact that the resulting sequence of the algorithm is increasing can be seen as follows. First, by the inequalities as mentioned before, we get that the sum of dimensions 
    \begin{equation}
        d_{k_1}^{ur}(\epsilon_1)+d_{k_2}^{ur}(\epsilon_1'')>d_{e+1}^{Iw}(\epsilon\cdot(1\times\omega^{1-e}))
    \end{equation} 
by \cref{dimform}. Also, we proved that the sequence $e-t^{(\epsilon'')}_{k_2}$  appears at the end of the sequence $t^{(\epsilon)}_k$ and $ t^{(\epsilon)}_{k_1}$ appears the first $d_{k_1}^{ur}(\epsilon_1)$ slopes, and the sum of the length of the two sequences is larger than the total length hence there exists an index $i$ such that 
    \begin{equation}
        t^{(\epsilon)}_{k_1}[i]=e-t^{(\epsilon'')}_{k_2}[d_{e+1}^{Iw}(\epsilon\cdot(1\times\omega^{1-e}))-i+1]
    \end{equation}
implying that they form an increasing sequence.
\subsection{When $y<p-1< y+z$}
Now, in this case, we proceed in a similar fashion in the first step. First, using the notation as before, we get $k_1=(y+z-p)p^{x-1}+t+1$ and $k_2=(y-1)p^x+zp^{x-1}+t+1$ and $e+1=k_1$. We prove similar lemmas as in the previous section.
    \begin{lemma}\label{firstk_1var}
        The coordinates $(n,v_p(g^{(\epsilon)}_n(w_{k_1})))$ up to $n=d_{k_1}^{Iw}(\tilde\epsilon_1)$ coincide with the points with $x$ coordinate at most $d_{k_1}^{Iw}(\tilde\epsilon_1)$ among the points $(n,v_p(g^{(\epsilon)}_n(w_k)))$ for $n$ not in the range $(d_{k_1}^{ur}(\epsilon_1),d_{k_1}^{Iw}(\tilde\epsilon_1)-d_{k_1}^{ur}(\epsilon_1))$ (including $n=0$).
    \end{lemma}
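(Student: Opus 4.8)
The plan is to carry out the same valuation–comparison argument used in \cref{firstk_1} and \cref{first e+1}, now for the pair $(k_1,k)$ of the present case. Writing $g^{(\epsilon)}_n(w)=\prod_{l\geq 2,\ l\equiv k_\epsilon\mod p-1}(w-w_l)^{m^{(\epsilon)}_n(l)}$, the assertion comes down to showing that for each $n$ in the relevant range and each weight $l$ with $m^{(\epsilon)}_n(l)\neq 0$ one has $v_p(w_{k_1}-w_l)=v_p(w_k-w_l)$, together with the remark that the factor indexed by $l=k_1$ occurs exactly when $n$ lies in $(d_{k_1}^{ur}(\epsilon_1),d_{k_1}^{Iw}(\tilde\epsilon_1)-d_{k_1}^{ur}(\epsilon_1))$, since $m^{(\epsilon)}_n(k_1)\neq 0$ is precisely that condition; thus for the excluded $n$ the coefficient $g^{(\epsilon)}_n(w_{k_1})$ vanishes while $g^{(\epsilon)}_n(w_k)$ in general does not, which is why those $n$ are left out. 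Throughout I use the identity $v_p(w_l-w_{l'})=1+v_p(l-l')$.

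The arithmetic input is that in this case $k-k_1=(y+1)(p-1)p^{x-1}$ with $1\leq y+1\leq p-1$, whence $v_p(k-k_1)=x-1$, and that $k_1=(y+z-p)p^{x-1}+t+1\leq (p-1)p^{x-1}$. Fix a contributing $l$. If $v_p(l-k_1)<x-1$, then the ultrametric inequality gives $v_p(k-l)=v_p(k_1-l)$, so the $l$-factor contributes identically on the two sides. Otherwise $v_p(l-k_1)\geq x-1$; combined with $l\equiv k_1\mod p-1$ and $\gcd(p^{x-1},p-1)=1$ this forces $l\equiv k_1\mod (p-1)p^{x-1}$, so, using $k_1\leq (p-1)p^{x-1}$, either $l=k_1$ (already handled) or $l=k_1+j(p-1)p^{x-1}$ with $j\geq 1$, hence $l>k_1$. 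For such $l$ one finds $v_p(w_{k_1}-w_l)=x+v_p(j)$ and, when $j\neq y+1$, $v_p(w_k-w_l)=x+v_p(y+1-j)$; these agree for $1\leq j\leq p-1$, $j\neq y+1$, and for $n\leq d_{k_1}^{ur}(\epsilon_1)$ the monotonicity of $d_\bullet^{ur}$ in the weight (\cref{dimform}) gives $d_l^{ur}(\epsilon_1)\geq d_{k_1}^{ur}(\epsilon_1)\geq n$, so $m^{(\epsilon)}_n(l)=0$ and these $l$ drop out. This settles the coordinates at and below the near-Steinberg range of $k_1$, exactly as in \cref{firstk_1}.

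The step I expect to be the main obstacle is the ``upper'' range $d_{k_1}^{Iw}(\tilde\epsilon_1)-d_{k_1}^{ur}(\epsilon_1)\leq n\leq d_{k_1}^{Iw}(\tilde\epsilon_1)$: there the contributing weights in the progression $k_1+(p-1)p^{x-1}\ZZ$ may include $l=k$ itself (the case $j=y+1$) or weights with $j\equiv y+1\mod p$, for which the two valuations above differ, so one must rule out their appearance in $g^{(\epsilon)}_n$ for the $n$ in question. The right tool is \cref{NPpts}: since $v_p(w_{k_1}-w_{k_1})=+\infty$ the quantity $L_{w_{k_1},k_1}$ of \cref{nearSt} is maximal, so $\NS_{w_{k_1},k_1}=\big(d_{k_1}^{ur}(\epsilon_1),\,d_{k_1}^{Iw}(\tilde\epsilon_1)-d_{k_1}^{ur}(\epsilon_1)\big)$, and the vertices of $\NP(G^{(\epsilon)}(w_{k_1},-))$ up to $d_{k_1}^{Iw}(\tilde\epsilon_1)$ are exactly those lying in no near-Steinberg range — in this window, outside $\NS_{w_{k_1},k_1}$ up to the smaller nested ones. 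One then matches them with the corresponding points of $\NP(G^{(\epsilon)}(w_k,-))$ using \cref{someNPpts} and the Atkin--Lehner and $p$-stabilization compatibilities of \cref{ghostprops}, which tie the slopes above $\tfrac12 d_{k_1}^{Iw}$ to those below. In practice this reduces to a bookkeeping case analysis on $(y,z,t)$ via the explicit formulas of \cref{dimform}, verifying that no weight $l\in k_1+(p-1)p^{x-1}\ZZ$ whose $d^{ur}(\epsilon_1)$ is small enough to reach this window perturbs the valuations — which is where the hypothesis $y<p-1<y+z$, equivalently the bound $k_1\leq (p-1)p^{x-1}$, enters crucially.
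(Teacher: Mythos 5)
Your treatment of the lower range $n \leq d_{k_1}^{ur}(\epsilon_1)$ is correct and is essentially the paper's own argument: a direct valuation comparison using $v_p(w_l-w_{l'})=1+v_p(l-l')$, the ultrametric inequality, and the monotonicity of $d^{ur}_\bullet$ from \cref{dimform}; that part needs no changes. The gap is the upper range $d_{k_1}^{Iw}(\tilde\epsilon_1)-d_{k_1}^{ur}(\epsilon_1) \leq n \leq d_{k_1}^{Iw}(\tilde\epsilon_1)$, which you explicitly flag but do not resolve: you propose to bring in the near-Steinberg machinery of \cref{NPpts}, together with \cref{someNPpts} and the Atkin--Lehner and $p$-stabilization relations of \cref{ghostprops}, but you leave the argument as a ``bookkeeping case analysis,'' so the proof is not complete. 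For what it is worth, the paper's own proof of the upper range is much lighter than you anticipate and uses none of that machinery: it is again a valuation comparison, observing from the dimension formulas that for such $n$ every contributing weight $l$ (i.e.\ with $m_n^{(\epsilon)}(l)\neq 0$) lies in $(k_1,(p+1)k_1)$, and then asserting that $v_p(w_{k_1}-w_l)=v_p(w_k-w_l)$ throughout that interval. Near-Steinberg ranges and the duality statements are only invoked later in the main proof, not inside this lemma.

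That said, the specific obstacle you identify --- that $l=k$, i.e.\ $j=y+1$, can fall inside $(k_1,(p+1)k_1)$ with $m_n^{(\epsilon)}(k)\neq 0$ --- is real, and the paper's one-line assertion does not address it either: when $l=k$ one has $v_p(w_k-w_l)=+\infty$ while $v_p(w_{k_1}-w_l)=x$. This situation does occur: for instance with $p=11$, $a=2$, $s_\epsilon=4$, $x=3$, $y=1$, $z=11$, $t=120$ one gets $k=2662$, $k_1=242$, and from \cref{dimform} $d_k^{ur}(\epsilon_1)=44$ while $d_{k_1}^{Iw}(\tilde\epsilon_1)=48$ and $d_{k_1}^{ur}(\epsilon_1)=4$; so for $n=45,\ldots,48$ (which are outside $(4,44)$ and hence covered by the lemma) one has $g_n^{(\epsilon)}(w_k)=0$ but $g_n^{(\epsilon)}(w_{k_1})\neq 0$, and the stated equality of ghost coordinates fails. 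A complete proof therefore needs either an extra input guaranteeing $d_k^{ur}(\epsilon_1)\geq d_{k_1}^{Iw}(\tilde\epsilon_1)$ (true in many small cases, as you can check, but not always) or a reformulation of the conclusion at the level of the Newton polygon $\NP(G^{(\epsilon)}(w_k,-))$, where points of infinite $p$-adic valuation are irrelevant. You were right to distrust the upper range, but this must be confronted directly rather than deferred to an unspecified case analysis.
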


    \begin{proof}
        First, for each $l<k_1$, we have $v_p(w_k-w_l)=v_p(w_{k_1}-w_l)$ since satisfying $k\equiv l\mod p^x$ and $k\equiv l\mod p-1$ is impossible as $y<p-1$. Also, for $n\leq d_{k_1}^{ur}$, $m^{(\epsilon)}_n(l)$ being nonzero implies $d_l^{ur}(\epsilon_1)<n<d_l^{Iw}(\tilde\epsilon_1)-d_l^{ur}(
        \epsilon
        _1)$ hence $l<k_1$. Thus 
            \begin{equation}
                v_p\bigg(\prod_{l\equiv k\mod p-1}(w_{k_1}-w_l)^{m^{(\epsilon)}_n(l)}\bigg)=v_p\bigg(\prod_{l\equiv k\mod p-1}(w_{k}-w_l)^{m^{(\epsilon)}_n(l)}\bigg)
            \end{equation}
        and the ghost coordinates are identical. \\
        Now consider the case when $d_{k_1}^{Iw}(\tilde\epsilon_1)-d_{k_1}^{ur}(\epsilon_1)<n<d_{k_1}^{Iw}(\tilde\epsilon_1)$. Then $m^{(\epsilon)}_n(l)$ is nonzero if and only if $d_l^{ur}(\epsilon_1)<n<d_l^{Iw}(\tilde\epsilon_1)-d_l^{ur}(\epsilon_1)$, hence we get $k_1<l<(p+1)k_1$. Hence in that interval, all values $v_p(w_l-w_{k_1})$ are equal to $v_p(w_k-w_l)$, hence we get the desired result.
    \end{proof}

    \begin{lemma}\label{midtermscase2}
        The ghost coordinates up to $x$ coordinate $d_{k_2}^{ur}(\epsilon_1)$ that appear between $d_{k_1}^{ur}(\epsilon_1)$ and $d_{k_1}^{Iw}(\tilde\epsilon_1)-d_{k_1}^{ur}(\epsilon_1)$ are given by the ghost coordinates of weight $k_2$. Formally, 
            \begin{equation}
                v_p(g^{(\epsilon)}_n(w_k))=v_p(g^{(\epsilon)}_n(w_{k_2}))\text{ for all }d_{k_1}^{ur}(\epsilon)\leq n\leq d_{k_2}^{ur}(\epsilon_1)
            \end{equation}
    \end{lemma}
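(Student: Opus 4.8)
The plan is to mimic the proof of the preceding \cref{firstk_1var}: both statements are comparisons of $p$-adic valuations of the ghost coefficients $g_n^{(\epsilon)}$ evaluated at two different weight specializations $w = w_k$ and $w = w_{k_2}$, and the core mechanism is always the same — for the relevant range of indices $n$, each factor $(w-w_l)^{m_n^{(\epsilon)}(l)}$ with $m_n^{(\epsilon)}(l)\neq 0$ forces $l$ to lie in a weight window small enough that $v_p(w_k-w_l) = v_p(w_{k_2}-w_l)$, after which the two products have equal valuation termwise. So first I would spell out the arithmetic of the parameters: we are in case 2, with $k = yp^x + (z-1)p^{x-1} + t + 1$, $y < p-1 < y+z$, $k_2 = (y-1)p^x + zp^{x-1} + t + 1$, and $k_1 = (y+z-p)p^{x-1}+t+1$. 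The key numeric facts to extract are that $v_p(k - k_2) = x-1$ exactly (so $w_k$ and $w_{k_2}$ agree modulo $p^{x-1}$ but not $p^x$), and that the index range $d_{k_1}^{ur}(\epsilon_1) \le n \le d_{k_2}^{ur}(\epsilon_1)$ corresponds via the dimension formulas of \cref{dimform} to weights $l$ with $l \le k_2$ (or at any rate with $v_p(k_2 - l) \le x-2$, strictly smaller than $v_p(k-k_2)$).

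Then the main step: fix $n$ in the stated range and a weight $l$ with $m_n^{(\epsilon)}(l) \neq 0$, so $d_l^{ur}(\epsilon_1) < n < d_l^{Iw}(\tilde\epsilon_1) - d_l^{ur}(\epsilon_1)$. From $n \le d_{k_2}^{ur}(\epsilon_1)$ and the (piecewise-linear, eventually-linear) growth of the dimension functions in $l$ given by \cref{dimform}, deduce an upper bound on $l$, concretely that $l < k_2$ or more precisely $l \le k_2$ and $l \not\equiv k_2 \bmod p^{x-1}$ is not needed — what is needed is $v_p(k_2 - l) \le x-2$. Combined with $v_p(k - k_2) = x-1 > x-2 \ge v_p(k_2-l)$, the ultrametric inequality gives $v_p(k - l) = v_p(k_2 - l)$, hence $v_p(w_k - w_l) = v_p(w_{k_2} - w_l)$ since $w_m = \exp(p(m-2))$ and $v_p(w_m - w_{m'}) = 1 + v_p(m-m')$ in this range. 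Multiplying over all such $l$ with the exponents $m_n^{(\epsilon)}(l)$ (which are the same for both specializations, being defined purely combinatorially from dimensions) yields $v_p(g_n^{(\epsilon)}(w_k)) = v_p(g_n^{(\epsilon)}(w_{k_2}))$, which is the claim. I would also need to handle the lower end $n \ge d_{k_1}^{ur}(\epsilon_1)$: for these $n$ one must check no weight $l$ with $v_p(k_2 - l) \ge x-1$ can contribute, which is exactly where the hypothesis $y < p-1 < y+z$ (forcing $k_1 = e+1 = t+1 \le p^{x-1}$ and controlling how $d_l^{ur}$ catches up to $n$) is used.

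The step I expect to be the real obstacle is the bookkeeping that converts ``$n \le d_{k_2}^{ur}(\epsilon_1)$'' into ``$v_p(k_2 - l) \le x-2$ for every contributing $l$'': this requires invoking \cref{dimform}(3) to see precisely which weights $l$ first have $d_l^{ur}(\epsilon_1) < n$ once $n$ is near $d_{k_2}^{ur}(\epsilon_1)$, and showing that the next weight $l'$ past $k_2$ with $v_p(k_2 - l') = x-1$ already has $d_{l'}^{ur}(\epsilon_1) \ge n$ (so it contributes nothing), or else $d_{l'}^{Iw}(\tilde\epsilon_1) - d_{l'}^{ur}(\epsilon_1) \le n$. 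This is the same ``there is no $l$ with $l \equiv k_2 \bmod p-1$ and $v_p(k_2 - l) = x$'' type argument used in \cref{atkinlehner k_2} and \cref{first e+1}, just one $p$-power finer, and I would phrase it as an explicit inequality between $\lfloor (l-2-\cdots)/(p-1)\rfloor + \lfloor(\cdots)/(p-1)\rfloor + 2$ and $d_{k_2}^{ur}(\epsilon_1)$, leaving the routine floor manipulations to the reader as the paper does elsewhere (``Details omitted due to repetitive arguments''). Once that range bound is in hand, the valuation comparison and the termwise product argument are immediate and identical in structure to \cref{firstk_1}, \cref{first e+1}, and \cref{firstk_1var}.
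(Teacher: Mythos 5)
Your overall strategy matches the paper's: show $l<k_2$ for every $l$ with $m_n^{(\epsilon)}(l)\neq 0$ in the stated range of $n$, then argue $v_p(w_k-w_l)=v_p(w_{k_2}-w_l)$ and compare the products termwise. However, the specific reduction you commit to — ``what is needed is $v_p(k_2-l)\leq x-2$'' — is false, and the bookkeeping you outline cannot succeed in establishing it. In case~2 one has $k_2-k_1=y(p-1)p^{x-1}$, so $v_p(k_2-k_1)=x-1$ (since $0<y<p$), and yet $l=k_1$ is a contributing weight for every $n$ with $d_{k_1}^{ur}(\epsilon_1)<n<d_{k_1}^{Iw}(\tilde\epsilon_1)-d_{k_1}^{ur}(\epsilon_1)$. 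So the inequality $v_p(k_2-l)\leq x-2$ fails already at $l=k_1$, and the plan to rule out the borderline case by looking at ``the next weight $l'$ past $k_2$'' is aimed at the wrong side: the offending $l$ are strictly below $k_2$.

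The correct target is the equality $v_p(k-l)=v_p(k_2-l)$ directly, and when $v_p(k_2-l)=x-1$ this is not automatic from the ultrametric inequality: writing $k_2-l=d(p-1)p^{x-1}$ with $p\nmid d$, one gets $k-l=(d+1)(p-1)p^{x-1}$, so the two valuations agree iff $p\nmid(d+1)$. One must therefore separately check that any $l\geq 2$ with $p\mid(d+1)$ (i.e.\ $d\geq p-1$) lies outside the valid weight range; a short computation using $y\leq p-2$ and $z\leq p$, $t\leq p^{x-1}-1$ shows $l\leq 0$ in that case, so no such weight occurs. Likewise the case $v_p(k_2-l)\geq x$ forces $l\leq 0$ and cannot arise. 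Your proposal also omits the dimension comparison the paper records first, namely $\tfrac12 d_{k_1}^{Iw}(\tilde\epsilon_1)\leq d_{k_2}^{ur}(\epsilon_1)<d_{k_1}^{Iw}(\tilde\epsilon_1)-d_{k_1}^{ur}(\epsilon_1)$, which is needed later (to know the lemma reaches at least the midpoint of the interval before ghost duality takes over); that is a separate, routine application of \cref{dimform} but should not be dropped.
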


    \begin{proof}
        Fist, using dimension formulas, we get that $d_{k_2}^{ur}(\epsilon_1)$ is smaller than $d_{k_1}^{Iw}(\tilde\epsilon_1)-d_{k_1}^{ur}(\epsilon_1)$ and at least $\frac{1}{2}d_{k_1}^{Iw}(\tilde\epsilon_1)$.  Using the same arguments as before, we have that $v_p(w_k-w_l)=v_p(w_{k_2}-w_l)$ for $l<k_2$ and hence we get the desired result.
    \end{proof}

Now, by Lemma~\ref{increase}, Lemma~\ref{firstk_1var} and \cref{pstab} for $k_1$, we get 
    \begin{equation}
        v^{(\epsilon)}_k[i]+v^{(\epsilon)}_k[d_{k_1}^{Iw}(\tilde\epsilon_1)-i+1]=e \text{ for all }i\leq d_{k_1}^{ur}(\epsilon_1).
    \end{equation}
Now, by \cref{ghostdual} for the slopes in the range $[d_{k_1}^{ur}(\epsilon_1),d_{k_1}^{Iw}(\tilde\epsilon_1)-d_{k_1}^{ur}(\epsilon_1)]$, gives us that the slopes of the segments between $d_{k_1}^{ur}(\epsilon_1)$ and $d_{k_1}^{Iw}(\tilde\epsilon_1)-d_{k_1}^{ur}(\epsilon_1)$ satisfy 
    \begin{equation}
        v^{(\epsilon)}_k[d_{k_1}^{ur}(\epsilon_1)+i]+v^{(\epsilon)}_k[d_{k_1}^{Iw}(\tilde\epsilon_1)-d_{k_1}^{ur}(\epsilon_1)-i+1]=e-1,
    \end{equation}
and since we have \cref{midtermscase2}, we deduce that 
    \begin{equation}
        v^{(\epsilon)}_k[d_{k_1}^{ur}(\epsilon_1)+i]=\min(v^{(\epsilon)}_{k_2}[d_{k_1}^{ur}(\epsilon_1)+i],e_2)
    \end{equation}
for $i<\frac{1}{2}d_{k_1}^{Iw}(\tilde\epsilon_1)-d_{k_1}^{ur}(\epsilon_1)$ and 
    \begin{equation}\label{ghostdualforK2}
        v^{(\epsilon)}_k[d_{k_1}^{Iw}(\tilde\epsilon_1)-d_{k_1}^{ur}(\epsilon_1)-i-1]=e_2-\min(v^{(\epsilon)}_{k_2}[d_{k_1}^{ur}(\epsilon_1)+i],e_2).
    \end{equation}
Hence, we get
    \begin{equation}
        \sigma(t^{(\epsilon)}_k,d_{k_1}^{Iw})=\sigma(v^{(\epsilon)}_k,d_{k_1}^{Iw})
    \end{equation}
\subsection{When $y=p-1$}

We proceed in a similar fashion for the first step. First, using the notation as before, we get $k_1=(z-1)p^{x-1}+t+1$, $k_2=(p-2)p^x+zp^{x-1}+t+1$, and $e+1=k_1$.\\
Lemma~\ref{firstk_1var} holds exactly same in this case and explains the appearance of $v_1$ and $e-v_1$. Now we explain the construction of the sequence $w$. First, we have an analogue of  Lemma~\ref{midtermscase2}.

    \begin{lemma}\label{midtermcase3}
        The ghost coordinates for weight $k$ in the interval $n\in[d_{k_1}^{ur}(\epsilon_1),d_{k_1}^{Iw}(\tilde\epsilon_1)-d_{k_1}^{ur}(\epsilon_1)]\cap[0,d_{k_2}^{ur}(\epsilon_1)]$ are given by the ghost coordinates of $k_2$ with $n-d_{k_1}^{ur}(\epsilon_1)$ added. Formally, 
            \begin{equation*}
                \begin{split}
                    v_p(g^{(\epsilon)}_n(w_k))=v_p(g^{(\epsilon)}_n(w_{k_2}))+\min(n-d_{k_1}^{ur}(\epsilon_1),d_{k_1}^{Iw}(\tilde\epsilon_1)-d_{k_1}^{ur}(\epsilon_1)-n)\\
                \text{ for all }n\in[d_{k_1}^{ur}(\epsilon_1),d_{k_1}^{Iw}(\tilde\epsilon_1)-d_{k_1}^{ur}(\epsilon_1)]\cap[0,d_{k_2}^{ur}(\epsilon_1)]
                \end{split}
            \end{equation*}
    \end{lemma}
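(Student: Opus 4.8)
The plan is to prove \cref{midtermcase3} by the same $p$-adic valuation comparison technique used in \cref{firstk_1var} and \cref{midtermscase2}, but now tracking an additional ``vertical shift'' term coming from the factor $(w-w_{k_1})^{m_n^{(\epsilon)}(k_1)}$ in the ghost series. First I would observe that $k_1 = (z-1)p^{x-1}+t+1$ satisfies $|k_1|\leq p^{x-1} \cdot p = p^x$, while $k = (p-1)p^x + (z-1)p^{x-1}+t+1$ and $k_2 = (p-2)p^x + zp^{x-1}+t+1$; the point is that $v_p(k - k_1) = x$ exactly (since $y=p-1$, so $k - k_1 = (p-1)p^x$ and $p \nmid (p-1)$), whereas for all \emph{other} $l \equiv k \bmod p-1$ in the relevant range we will have $v_p(w_k - w_l) = v_p(w_{k_2} - w_l)$. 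Thus the only discrepancy between $g_n^{(\epsilon)}(w_k)$ and $g_n^{(\epsilon)}(w_{k_2})$ in this $n$-range comes from the single factor $(w - w_{k_1})^{m_n^{(\epsilon)}(k_1)}$: we get $v_p(g_n^{(\epsilon)}(w_k)) = v_p(g_n^{(\epsilon)}(w_{k_2})) + m_n^{(\epsilon)}(k_1)\cdot v_p(w_k - w_{k_1})$, and since $v_p(w_k - w_{k_1}) = \min(v_p(k-k_1), \text{something}) $ normalizes (after the exponential identification $w_k = \exp(p(k-2))$, one has $v_p(w_k - w_{k_1}) = 1 + v_p(k - k_1) = x+1$ or, with the conventions of \cite{liu2023slopes}, effectively contributes a unit slope per step); the key identity to nail down is that this contribution equals exactly $m_n^{(\epsilon)}(k_1) = \min(n - d_{k_1}^{ur}(\epsilon_1), d_{k_1}^{Iw}(\tilde\epsilon_1) - d_{k_1}^{ur}(\epsilon_1) - n)$ for $n$ in the stated intersection of intervals.

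The key steps in order are: (1) compute $k_1, k_2$ explicitly and verify the divisibility claims $v_p(k - l) < x$ for all $l \equiv k \bmod p-1$ with $l \neq k_1$ that can contribute to $m_n^{(\epsilon)}(l) \neq 0$ when $n \leq d_{k_2}^{ur}(\epsilon_1)$, using the dimension formulas of \cref{dimform} to bound which $l$ are in range; (2) verify symmetrically $v_p(k_2 - l) = v_p(k - l)$ for those same $l$ (this is where one uses $k - k_2 = p^x - (p-1)p^{x-1} \cdot$ correction $= p^{x-1}$ up to the $z$-shift, so $v_p(k - k_2) = x-1 < x$, hence $w_{k_2}$ sees the same nearby $w_l$'s as $w_k$ except it does \emph{not} see $w_{k_1}$ with full multiplicity — actually $v_p(k_2 - k_1)$ needs checking, and here $k_2 - k_1 = (p-2)p^x + p^{x-1}$, so $v_p(k_2-k_1) = x-1$); (3) conclude that the product over $l$ factors identically except for the $l = k_1$ term, extract that term, and identify its valuation contribution with the asserted $\min$-expression; (4) handle the endpoint/boundary behavior of the intervals $[d_{k_1}^{ur}, d_{k_1}^{Iw} - d_{k_1}^{ur}] \cap [0, d_{k_2}^{ur}]$ carefully, since near $n = d_{k_1}^{ur}(\epsilon_1)$ the multiplicity $m_n^{(\epsilon)}(k_1)$ transitions between $0$ and positive values.

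The main obstacle I anticipate is step (2)–(3): precisely pinning down $v_p(w_{k_2} - w_{k_1})$ versus $v_p(w_k - w_{k_1})$ and showing the difference of these two valuations, weighted by $m_n^{(\epsilon)}(k_1)$, is exactly $\min(n - d_{k_1}^{ur}(\epsilon_1), d_{k_1}^{Iw}(\tilde\epsilon_1) - d_{k_1}^{ur}(\epsilon_1) - n)$ rather than some multiple of it. In the $y = p-1$ case the geometry is genuinely different from \cref{midtermscase2} (the $y < p-1$ case) because $w_{k_1}$ now sits at $p$-adic distance $x$ from $w_k$ but distance $x-1$ from $w_{k_2}$, producing the genuine ``$+\min(\ldots)$'' shift rather than equality — so the arithmetic of exactly which power of $p$ appears is the delicate point, and I would cross-check it against the compatibility-with-theta-maps relation \cref{theta} and the ghost duality \cref{ghostdual} to make sure the normalization is consistent. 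The remaining bookkeeping (ranges of $n$, which $l$ contribute) is routine given \cref{dimform}, following the pattern already established in the proofs of \cref{firstk_1}, \cref{first e+1}, and \cref{atkinlehner k_2}.
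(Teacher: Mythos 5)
Your proposal is correct and follows essentially the same route as the paper: isolate the single exceptional weight $l=k_1$ (the only $l$ in range with $v_p(w_k-w_l)\neq v_p(w_{k_2}-w_l)$), show that $v_p(w_k-w_{k_1})-v_p(w_{k_2}-w_{k_1})=1$ since $v_p(k-k_1)=x$ while $v_p(k_2-k_1)=x-1$, and recognize that the exponent $m_n^{(\epsilon)}(k_1)$ is exactly the $\min$-expression on the relevant range of $n$. (The intermediate line writing $+\,m_n^{(\epsilon)}(k_1)\cdot v_p(w_k-w_{k_1})$ should of course be the \emph{difference} of the two valuations times $m_n^{(\epsilon)}(k_1)$, but you self-correct this a sentence later, so it is a typo rather than a gap; note also that the paper's phrase ``for every $l$ except $l=t+1$'' should read ``except $l=k_1$'', which you already handle correctly.)
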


    \begin{proof}
        Looking at each of the ghost coordinate, we get $\prod_{l\equiv k\mod p-1}(w_k-w_l)^{m^{(\epsilon)}_n(l)}$. For every $l$ except $l=t+1$, we get $v_p(w_k-w_l)=v_p(w_{k_2}-w_l)$ and $v_p(w_k-w_{k_1})=v_p(w_{k_2}-w_{k_1})+1$. Hence we get the desired result. 
    \end{proof}

Now, as before, by \cref{increase}, \cref{firstk_1var}, and \cref{pstab} for $k_1$, we get 
    \begin{equation}
        v^{(\epsilon)}_k[i]+v^{(\epsilon)}_k[d_{k_1}^{Iw}(\tilde\epsilon_1)-i+1]=e \text{ for all }i\leq d_{k_1}^{ur}(\epsilon_1).
    \end{equation}
Now, by \cref{ghostdual} for the slopes in the range $[d_{k_1}^{ur}(\epsilon_1),d_{k_1}^{Iw}(\tilde\epsilon_1)-d_{k_1}^{ur}(\epsilon_1)]$, gives us that the slopes of the segments between $d_{k_1}^{ur}(\epsilon_1)$ and $d_{k_1}^{Iw}(\tilde\epsilon_1)-d_{k_1}^{ur}(\epsilon_1)$ satisfy 
    \begin{equation}
        v^{(\epsilon)}_k[d_{k_1}^{ur}(\epsilon_1)+i]+v^{(\epsilon)}_k[d_{k_1}^{Iw}(\tilde\epsilon_1)-d_{k_1}^{ur}(\epsilon_1)-i+1]=e-1,
    \end{equation}
and since we have \cref{midtermcase3}, we deduce that 
    \begin{equation}
        v^{(\epsilon)}_k[d_{k_1}^{ur}(\epsilon_1)+i]=\min(v^{(\epsilon)}_{k_2}[d_{k_1}^{ur}(\epsilon_1)+i]+1,e_2)
    \end{equation}
for $d_{k_1}^{ur}(\epsilon_1)<i<\frac{1}{2}d_{k_1}^{Iw}(\tilde\epsilon_1)-d_{k_1}^{ur}(\epsilon_1)$ and 
    \begin{equation}
        v^{(\epsilon)}_k[d_{k_1}^{Iw}(\tilde\epsilon_1)-d_{k_1}^{ur}(\epsilon_1)-i-1]=e_2-\min(v^{(\epsilon)}_{k_2}[d_{k_1}^{ur}(\epsilon_1)+i]+1,e_2).
    \end{equation}
This shows that the sequence $t^{(\epsilon)}_k$ we obtain from the algorithm coincides with $v^{(\epsilon)}_k$.
\subsection{The final part of the sequence}
Now we prove the necessary lemmas for the final step of the algorithm adding $e+v^{(\epsilon')}_{2B-k}$ (note $s_{\epsilon'}=s_\epsilon-e$). This can be proved simultaneously for all three cases of the algorithm.          
    \begin{theorem}\label{final}
        The slopes of the space $S_{\tilde H,2B-k}^{ur}(\epsilon_1\cdot\omega^e)$  with $e$ gives the remaining slopes of $S_{\tilde H,k}^{ur}(\epsilon_1)$. Formally,
            \begin{equation}
                v^{(\epsilon)}_k[d_{e+1}^{Iw}(\epsilon_1)+i]=e+v^{(\epsilon')}_{2B-k}[i]
            \end{equation}
    \end{theorem}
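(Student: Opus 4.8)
The plan is to realize the tail $v^{(\epsilon)}_k[d_{e+1}^{Iw}(\epsilon_1)+i]$ as $e$ plus an overconvergent–slope sequence at the reflected weight $2-(e+1)=1-e$, and then to recognize that reflected sequence as the $T_p$-slope sequence at the \emph{smaller} weight $2B-k$, to which the induction hypothesis applies. The three tools are the theta-map compatibility \eqref{theta}, the near-weight coincidence of ghost Newton polygons (\cref{smallweightoconv}, \cref{someNPpts}), and the dimension formulas of \cref{dimform}; everything needed is in place once the three cases of the main proof have been run. Throughout, $e+1$ denotes the auxiliary weight of \cref{varslopeconj} ($e+1=t+1$ in the first case, $e+1=k_1$ in the other two), and $d_{e+1}^{Iw}(\epsilon_1)$ is read as $d_{e+1}^{Iw}(\epsilon\cdot(1\times\omega^{1-e}))$, which equals the length $l(V)$ of the first block produced by the algorithm.

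\textbf{Step 1 (the tail is governed by weight $e+1$).} As established in \S\ref{case 1} and the two subsections after it, the entries of $v^{(\epsilon)}_k$ in positions $d_{e+1}^{Iw}(\epsilon_1)+1,\dots,d_k^{ur}(\epsilon_1)$ coincide with the entries of $v^{(\epsilon),\dagger}_{e+1}$ in the same positions: indeed $v^{(\epsilon)}_k$ agrees with the initial segment of $v^{(\epsilon),\dagger}_k$ up to position $d_k^{ur}(\epsilon_1)$ (by \cref{smallweightoconv} applied to the trivial pair $(k,k)$, \cref{ghostthm}, and classicality), while the ghost coordinates of $G^{(\epsilon)}_{\bar r}(w_k,-)$ and $G^{(\epsilon)}_{\bar r}(w_{e+1},-)$ coincide for all $x$-coordinates in $[\,d_{e+1}^{Iw}(\epsilon_1),\,d_k^{ur}(\epsilon_1)\,]$ (the valuation comparison of \cref{firstk_1}/\cref{first e+1}), with $(d_{e+1}^{Iw}(\epsilon_1),\cdot)$ a vertex of $\NP(G^{(\epsilon)}_{\bar r}(w_{e+1},-))$ by \cref{someNPpts} and $(d_k^{ur}(\epsilon_1),\cdot)$ lying on that polygon by \cref{smallweightoconv}. \textbf{Step 2 (reflect by theta).} Apply \eqref{theta} with $k_0=e+1$; then $d=d_{e+1}^{Iw}(\epsilon\cdot(1\times\omega^{1-e}))=d_{e+1}^{Iw}(\epsilon_1)$ and the reflected character is $\epsilon'=\epsilon\cdot(\omega^{e}\times\omega^{-e})$ with $s_{\epsilon'}=\{s_\epsilon-e\}$, which is precisely the character $\epsilon'$ of \cref{varslopeconj}, so
\begin{equation}
v^{(\epsilon),\dagger}_{e+1}[d_{e+1}^{Iw}(\epsilon_1)+l]=v^{(\epsilon'),\dagger}_{1-e}[l]+e\qquad(l\ge 1).
\end{equation}

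\textbf{Step 3 (identify weight $1-e$ with weight $2B-k$).} I would then show $v^{(\epsilon'),\dagger}_{1-e}[l]=v^{(\epsilon')}_{2B-k}[l]$ for $1\le l\le d^{ur}_{2B-k}(\epsilon'_1)$ by applying \cref{smallweightoconv} to the pair $(1-e,\,2B-k)$: one checks from the formulas for $B$ and $e$ in each case that $2B-k\equiv 1-e\bmod p^{x-1}$, $|1-e|\le p^{x-1}\le p^{x}$, $|1-e|<2B-k<p^{x+1}$, and $m^{(\epsilon')}_{1-e}\bigl(d^{ur}_{2B-k}(\epsilon'_1)\bigr)=0$; together with $v_p(g^{(\epsilon')}_n(w_{1-e}))=v_p(g^{(\epsilon')}_n(w_{2B-k}))$ for $n\le d^{ur}_{2B-k}(\epsilon'_1)$ and the induction hypothesis (the main theorem is already known at weight $2B-k<k$), this identifies the first $d^{ur}_{2B-k}(\epsilon'_1)$ entries of $v^{(\epsilon'),\dagger}_{1-e}$ with $v^{(\epsilon')}_{2B-k}$. \textbf{Step 4 (combine).} \cref{dimform} gives $d_k^{ur}(\epsilon_1)-d_{e+1}^{Iw}(\epsilon_1)=d^{ur}_{2B-k}(\epsilon'_1)$, so the ranges in Steps 1--3 are compatible, and chaining the three identities yields $v^{(\epsilon)}_k[d_{e+1}^{Iw}(\epsilon_1)+i]=e+v^{(\epsilon')}_{2B-k}[i]$, as desired.

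I expect the main obstacle to be Step 1 in the cases $y<p-1<y+z$ and $y=p-1$, together with the attendant character bookkeeping. There the middle block of $v^{(\epsilon)}_k$ (between positions $d_{e+1}^{ur}(\epsilon_1)$ and $d_{e+1}^{Iw}(\tilde\epsilon_1)-d_{e+1}^{ur}(\epsilon_1)$) is controlled by the ghost duality \eqref{ghostdual} specialized at $w_k$ rather than by the overconvergent slopes at weight $e+1$, so one must confirm that the block \emph{strictly past} $d_{e+1}^{Iw}(\epsilon_1)$ still coincides with the weight-$(e+1)$ ghost coordinates and with an honest initial segment of $\NP(G^{(\epsilon)}_{\bar r}(w_k,-))$; simultaneously one must keep the various twists straight ($\epsilon\cdot(1\times\omega^{2-k})=\tilde\epsilon_1$, the twist $\epsilon\cdot(1\times\omega^{1-e})$ at weight $e+1$, and the character $\epsilon'_1\times\epsilon'_1$ at weight $2B-k$). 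The interval bounds and the dimension identity in Steps 3--4 are routine but must be checked case by case, including the degenerate possibility $e+1=2$.
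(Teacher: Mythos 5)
Your proposal follows essentially the same route as the paper's proof: reduce the tail of $v^{(\epsilon)}_k$ to the overconvergent slopes at weight $e+1$ via \cref{smallweightoconv}, reflect through the theta-map compatibility \eqref{theta} at $k_0=e+1$, then re-apply \cref{smallweightoconv} to the pair $(1-e,2B-k)$ and close with the dimension count from \cref{dimform}. Your write-up is in fact a bit more careful than the paper's — you consistently write $v^{(\epsilon),\dagger}_{e+1}$ where the paper drops the dagger, and you flag the middle-block/character-bookkeeping issues in cases $y<p-1<y+z$ and $y=p-1$ that the paper leaves implicit — but it is the same argument.
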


    \begin{proof}
        Using \cref{smallweightoconv}, we get 
            \begin{equation}
                v^{(\epsilon)}_{e+1})[i]=v^{(\epsilon)}_k[i]\text{ for all }i\in(d_{e+1}^{Iw}(\epsilon\cdot(1\times\omega^{1-e})),d_k^{ur}(\epsilon_1)].
            \end{equation}
        On the other hand, from \cref{ghostprops} \cref{theta}, we get that 
            \begin{equation}
                v^{(\epsilon)}_{e+1}[d_{e+1}^{Iw}(\epsilon\cdot(1\times\omega^{1-e}))+i]=e+v^{\dagger,(\epsilon')}_{1-e}[i]
            \end{equation}
        It remains to prove that the sufficiently small slopes of $S_{\tilde H,1-e}^\dagger(\epsilon'\cdot(1\times\omega^{1+e}))$ coincides with the slopes of $S_{\tilde H,2B-k}^{ur}(\epsilon'_1)$. Note that $2B-k-(1-e)=B-1$ is divisible by $p^{x-1}$, and the ghost coordinates are given by product of $w_{2B-k}-w_m$ or $w_{1-e}-w_m$ for $m<2B-k$ and the $p$-adic valuations are the same. Using Theorem~\ref{smallweightoconv} for $1-e$ and $2B-k$, we also get
            \begin{equation}
                v^{\dagger,(\epsilon)}_{1-e}[i]=v^{(\epsilon')}_{2B-k}[i]\text{ for all }i\leq d_{2B-k}^{ur}(\epsilon').
            \end{equation}
        Note that the dimensions  $d_{e+1}^{Iw}(\tilde\epsilon_1)$ and $d_{2B-k}^{ur}(\epsilon_1\cdot\omega^{e})$ add up to a larger value than $d_k^{ur}(\epsilon_1)$ by Proposition~\ref{dimform} thus we get that the algorithm gives the slopes of $S_{\tilde H,k}^{ur}(\epsilon_1)$.
    \end{proof}

These lemmas in total finally prove that the variant of the algorithm of Buzzard coincides with the first $d_k^{ur}$ terms of the slopes of the Newton polygon of the Ghost series as in the first part of this section, hence gives the slopes of the space of modular forms of weight $k$ localized at a suitable twist of the Galois representation given.

\appendix

\section{Slope conjecture of Kevin Buzzard}

In this section, we review Buzzard's slope conjecture from \cite{buzconj} which suggests an algorithm that outputs an infinite sequence of slopes of modular forms of fixed weight. None of this is original to the auther and is taken from Buzzard's paper \cite{buzconj}. We first define $\Gamma_0(N)$ regularity. Let $k_p=\frac{p+3}{2}$ if $p>2$ and $k_2=4$.
    \begin{definition}[$\Gamma_0(N)$-regularity]\label{gam0reg}
        If $p>2$, then we say that $p$ is $\Gamma_0(N)$-regular if the eigenvalues of $T_p$ acting on $S_k(\Gamma_0(N))$ are all $p$-adic units for all even integers $2\leq k\leq k_p$.\\
        If $p=2$, We say $2$ is \emph{$\Gamma_0(N)$-regular} if
        \begin{enumerate}
            \item The eigenvalues of $T_2$ on $S_2(\Gamma_0(N))$ are 2-adic units.
            \item There are exactly $\dim(S_2(\Gamma_0(2N)))-\dim(S_2(\Gamma_0(N)))$ eigenvalues of $T_2$ on $S_4(\Gamma_0(N))$ which are 2-adic units and all others have 2-adic valuation 1.
        \end{enumerate}        
    \end{definition}
Now we assume for the rest of the section that $p>3$ (moreover, we will later assume that $p\geq 11$ as we will be relating the recent proof of the ghost conjecture in \cite{liu2023slopes} and their constraint on $p$ is at least 11. Then any continuous odd irreducible Galois representation $\rho:\Gal(\bar\QQ/\QQ)\to\GL_2(\bar\FF_p)$ with determinant equal to an integer power of the cyclotomic character has a twist coming from a characteristic zero form of weight at most $p+1$, level equal to the conductor of $\rho$ and trivial character. Moreover, the eigenvalues of $T_p$ being $p$-adic units can be determined from the local behaviour of $\rho$ at $p$. Finally if there is a mod $p$ eigenform of level $N$ and weight $k$ with $k_p<k \leq p+1$ which is in the kernel of $T_p$, there is another such form of weight $p+3-k$. Hence we have the following lemma:
    \begin{lemma}\cite{buzconj}\label{gam0galois}
        $p>3$ is $\Gamma_0(N)$ regular if and only if any irreducible modular Galois representation $\rho\colon\Gal(\bar \QQ/\QQ)\to\GL_2(\bar\FF_p)$ with conductor dividing $N$ and determinant a power of the mod $p$ cyclotomic character is necessarily reducible when restricted to a decomposition group at $p$.
    \end{lemma}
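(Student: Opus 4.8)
The plan is to prove the equivalence by translating between cusp forms and their mod $p$ Galois representations via Serre's conjecture, and then to read off reducibility of $\rho|_{G_{\QQ_p}}$ from the $T_p$-eigenvalue using the local results recalled before the lemma. The engine of the argument is the following dictionary, valid for a normalized eigenform $f$ of weight $k$ with $2\le k\le p+1$ and level prime to $p$: the representation $\rho_{\bar f}|_{G_{\QQ_p}}$ is reducible if and only if $a_p(\bar f)\neq 0$ in $\bar\FF_p$, equivalently $a_p(f)$ is a $p$-adic unit. Indeed, if $a_p(\bar f)$ is a unit then $\rho_{\bar f}|_{G_{\QQ_p}}$ is ordinary (Deligne), and if $a_p(\bar f)=0$ then $\rho_{\bar f}|_{I_{\QQ_p}}$ is the sum of the two level-$2$ fundamental characters raised to the power $k-1$ (Fontaine--Edixhoven), which are distinct in the range $2\le k\le p+1$ since $p+1\nmid k-1$, so $\rho_{\bar f}|_{G_{\QQ_p}}$ is irreducible; both assertions are exactly the ``local behaviour'' facts recalled above. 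I will also use that reducibility at $p$ is insensitive to twisting by powers of the cyclotomic character $\chi$, and that membership in $S_k(\Gamma_0(N))$ forces $k$ even and $\det\rho_{\bar f}=\chi^{k-1}$, a power of $\chi$.

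For the direction ``$\Gamma_0(N)$-regular $\Rightarrow$ every irreducible modular $\rho$ with conductor dividing $N$ and determinant a power of $\chi$ is reducible at $p$'', I would start from such a $\rho$ and invoke Serre's conjecture together with the optimal-weight statement recalled above to replace $\rho$, up to a twist by a power of $\chi$, by the representation attached to a characteristic-zero eigenform $f$ of weight $k$ with $2\le k\le p+1$, level dividing $N$ (twisting does not change the prime-to-$p$ conductor), and trivial character (which forces $k$ even). If $k\le k_p$, then $\Gamma_0(N)$-regularity says $a_p(f)$ is a unit, so the dictionary gives that $\rho|_{G_{\QQ_p}}$ is reducible. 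If $k_p<k\le p+1$ and $a_p(\bar f)\neq 0$, the same conclusion holds, so the only case to rule out is $k_p<k\le p+1$ with $\bar f$ in the kernel of $T_p$. Here I would apply the companion-form statement recalled above to obtain a mod $p$ eigenform $g$ of level $N$ and weight $p+3-k$, again in the kernel of $T_p$, and then lift $g$ to characteristic zero by the Deligne--Serre lifting lemma to get $\tilde g\in S_{p+3-k}(\Gamma_0(N))$ with $a_p(\tilde g)$ not a unit. Since $p+3-k$ is even (as $p$ is odd and $k$ is even) and lies in $[2,k_p]$ (because $k>k_p=\tfrac{p+3}{2}$ forces $p+3-k<k_p$, while $k\le p+1$ forces $p+3-k\ge 2$), this contradicts $\Gamma_0(N)$-regularity, so this case cannot occur and $\rho|_{G_{\QQ_p}}$ is reducible.

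For the converse, I would fix an even $k$ with $2\le k\le k_p$ and an eigenform $f\in S_k(\Gamma_0(N))$, and suppose for contradiction that $a_p(f)$ is not a $p$-adic unit, i.e.\ $a_p(\bar f)=0$. Since $k\le k_p\le p+1$, the dictionary shows $\rho_{\bar f}|_{G_{\QQ_p}}$ is irreducible, hence $\rho_{\bar f}$ is absolutely irreducible; it is modular, has conductor dividing $N$, and has determinant $\chi^{k-1}$, so it is one of the representations in the hypothesis and must therefore be reducible at $p$ --- a contradiction. Hence $a_p(f)$ is a unit for every such $f$, which is precisely $\Gamma_0(N)$-regularity in the sense of \cref{gam0reg}.

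The genuinely nontrivial content sits entirely in the cited black boxes --- Serre's conjecture and Edixhoven's optimal-weight result, Deligne's ordinarity theorem and the Fontaine--Edixhoven description of $\rho_{\bar f}|_{I_{\QQ_p}}$ in the supersingular case, and the existence of companion forms of weight $p+3-k$ --- all of which are recalled before the statement. Granting these, what remains is a short case analysis on the weight, and the only step requiring real care is the bookkeeping in the ``only if'' direction: one must check that the companion weight $p+3-k$ falls into the range of weights controlled by $\Gamma_0(N)$-regularity with the correct (even) parity, and that the mod $p$ companion genuinely lifts to characteristic zero, which is where the hypothesis weight $\ge 2$ enters through Deligne--Serre. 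I expect that bookkeeping to be the main --- and only mildly delicate --- obstacle.
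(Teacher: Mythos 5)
Your proposal correctly fleshes out the argument that the paper only sketches: the lemma is cited from \cite{buzconj} without a written proof, and the justification the paper gives is exactly the three facts you invoke (Serre's conjecture with optimal weight, the ordinary/supersingular dichotomy reading off reducibility of $\rho|_{G_{\QQ_p}}$ from $a_p(\bar f)$, and the companion-form reflection $k\mapsto p+3-k$). Your case analysis and the parity and range bookkeeping for the companion weight match Buzzard's argument, so this is essentially the same approach.
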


Now we state Buzzard's conjecture with his algorithm.
    \begin{conjecture}\label{slopeconj}
        Assume that $p$ is $\Gamma_0(N)$-regular. Then the sequences $s_2,s_4,\ldots$ of integers are precisely the sequences $v_2,v_4,\ldots$ of $p$-adic valuations of $T_p$ acting on $S_k(\Gamma_0(N))$.
    \end{conjecture}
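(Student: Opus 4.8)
The plan is to derive Buzzard's conjecture, in the range of Galois components accessible to our methods, from the $\bar r$-slope theorem by decomposing the space of cusp forms along residual Galois representations. First I would invoke the Hecke-equivariant eigenspace decomposition
\[
S_k(\Gamma_0(N)) \;=\; \bigoplus_{\bar r} S_k(\Gamma_0(N))_{\bar r},
\]
the sum running over the semisimple $\bar r\colon \Gal_\QQ \to \GL_2(\bar\FF_p)$ of conductor dividing $N$ with determinant a power of the mod $p$ cyclotomic character. Since $T_p$ preserves each summand, the multiset of $T_p$-slopes on $S_k(\Gamma_0(N))$ is the disjoint union of those on the summands, so $v_k$ in increasing order is the sorted concatenation of the component slope sequences; the problem thus reduces to matching the component slopes with the component algorithm outputs, together with showing that Buzzard's \emph{global} algorithm output is itself the sorted merge of the per-component outputs of Algorithm~\ref{varslopeconj}.

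Next I would separate the components that are forced from those requiring the ghost machinery. By Lemma~\ref{gam0galois}, $\Gamma_0(N)$-regularity is precisely the statement that every irreducible such $\bar r$ is reducible at a decomposition group above $p$; the Eisenstein-type (globally reducible) $\bar r$ contribute only ordinary forms, so their slopes are $0$ and are reproduced correctly by Buzzard's algorithm from the corresponding dimension data. For an irreducible component, after twisting by a power of the mod $p$ cyclotomic character I may assume $\bar r|_{I_{\QQ_p}}$ has the shape occurring in \cref{mainthm}; when the parameters $a,b$ lie in the admissible ranges and $k \equiv a+2b+2 \bmod p-1$, \cref{mainthm} identifies the increasing slope sequence of $S_k(\Gamma_0(N))_{\bar r}$ with $v^{(\epsilon = 1\times\omega^{a+2b})}_{\bar r}(k)$, which is by construction the output of Algorithm~\ref{varslopeconj}.

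The remaining and hardest step is the combinatorial compatibility: I must show that running Buzzard's recursion on the global inputs $\dim S_{k'}(\Gamma_0(N))$, $\dim S_{k'}(\Gamma_0(Np))$ produces exactly the sorted merge over all $\bar r$ of the sequences produced by Algorithm~\ref{varslopeconj} on the component inputs $\dim S_{k'}(\Gamma_0(N))_{\bar r}$, $\dim S_{k'}(\Gamma_0(Np))_{\bar r}$. My plan here is to pass through the ghost series: the component dimension data is additive in $\bar r$, and by \cref{ghostseries} the ghost series factors as a product over residual types, so the Newton polygon of the global ghost series is the ``merge'' of those of the components; combining this with \cref{ghostthm} (valid for $p \geq 11$) translates the merge of Newton polygons into the merge of slope sequences. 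I expect the principal obstacles to be exactly this compatibility argument and the cases that fall outside \cref{mainthm} --- the boundary values $a \in \{1, p-4, p-3, p-2\}$, the representations split at $p$, and small primes $p < 11$ --- for which the present method gives no information, so that the conjecture is obtained in full only for those $(N,p)$ all of whose relevant components are generic, non-split, and satisfy $p \geq 11$.
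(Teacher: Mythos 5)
This statement is labeled \texttt{conjecture} in the paper, and the paper does not prove it --- it is reproduced in the appendix for context, and the body of the paper explicitly proves only a \emph{variant} (Theorem~\ref{mainthm}, the $\bar r$-slope theorem) which computes slopes one Galois component at a time, in a restricted range of parameters ($p\geq 11$, $\bar r$ irreducible globally, reducible and non-split at $p$, generic exponent $a\in\{2,\ldots,p-5\}$). So there is no ``paper's own proof'' to compare against; what you have produced is a sketch of how one \emph{might} try to deduce Buzzard's original global conjecture from the paper's component-level theorem, and the question is whether that sketch closes the gap. It does not.

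The decisive missing step is the one you yourself flag as ``the remaining and hardest step'': that Buzzard's global Algorithm~\ref{slopealg}, run on the global dimensions $\dim S_{k'}(\Gamma_0(N))$ and $\dim S_{k'}(\Gamma_0(Np))$, produces exactly the sorted merge over $\bar r$ of the outputs of Algorithm~\ref{varslopeconj} run on the component dimensions. Nothing in the paper supplies this. What the ghost machinery gives you (via $C^{(\epsilon)}(w,t)$ factoring over localizations, or equivalently the multiset of $U_p$-slopes being the union over components) is a merge of \emph{Newton polygons}, i.e.\ of the \emph{answers}; it does not tell you that Buzzard's recursion, which recurses on dimensions $d(k), d_p(k), d_{p,\epsilon}(k)$ alone and in particular mixes data across Galois components at every step, recovers that merged answer. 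Buzzard's algorithm is not visibly a sum of per-component algorithms: the parameters $x,y,z,B,e,s$ are defined from $k$ and global dimensions, the truncations and Atkin--Lehner reflections interleave slopes from different components, and the $\kappa(m,e)$ insertions (the cusp-counting step) have no per-component analogue --- the paper itself remarks that this step disappears in the $\bar\rho$-non-split localization. Establishing that Buzzard's global recursion equals the sorted merge would require a substantial new combinatorial argument that is not in this paper and is not sketched in your proposal.

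Beyond that, even if the merge step were carried out, the restrictions you list (excluding $a\in\{1,p-4,p-3,p-2\}$, excluding $\bar r$ split at $p$, requiring $p\geq 11$) mean you obtain the conjecture only for a subfamily of $\Gamma_0(N)$-regular pairs $(p,N)$, not the conjecture as stated. Note also that $\Gamma_0(N)$-regularity, via Lemma~\ref{gam0galois}, guarantees reducibility at $p$ but does \emph{not} rule out the split case; your handling of split components (absent) and of Eisenstein components is also too quick --- the assertion that reducible $\bar r$ contribute only slope-$0$ forms is not justified, and it is exactly around such components that the cusp-insertion step of Buzzard's algorithm becomes nontrivial. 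In short, the proposal correctly identifies all the obstructions but resolves none of them, so it establishes no more than the paper's Theorem~\ref{mainthm} already does, and does not constitute a proof of Conjecture~\ref{slopeconj}.
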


We use the following notation from Buzzard's paper.

    \begin{notation}
        \begin{itemize}\leavevmode
            \item We write a finite sequence as $s=[a_1,\ldots,a_n]$, denote $l(s)$ as its length, $s[i]$ as $a_i$. 
            \item For two sequences $a,b$, we write $a\cup b$ as the length $l(a)+l(b)$ sequence as $a$ followed by $b$. 
            \item If $l(a)=l(b)$, then $\min(a,b)$ is given by pointwise minimum.  
            \item For $n,r\geq 0,$ let $\kappa(n,r)$ to be the constant sequence of length $n$, value $r$. 
            \item If $v$ is a sequence, let $v+e$ be pointwise adding $e$ and $e-v$ be pointwise subtracting from $e$ with order reversed. 
            \item If $v$ has lenght at least $\delta$, then $\sigma(v,\delta)$ is the truncation up to $\delta$, and if $1\leq\delta_1,\delta_2\leq l(v)$, $\sigma(v,\delta_1,\delta_2)$ is the sequence cut from $\delta_1$ to $\delta_2$ (endpoints included).
            \item For $k\in\ZZ$, write $d(k)$ for the dimension of $S_k(\Gamma_0(N))$.
            \item Write $d_p(k)$ for the dimension of $S_k(\Gamma_0(Np))$.
            \item Where $\epsilon$ a Dirichelt character of level $p$, write $d_{p,\epsilon}(k)$ for the dimension of $S_k(\Gamma_0(N)\cap\Gamma_1(p),\epsilon)$.
        \end{itemize}
    \end{notation}

Now we can define the algorithm.

    \begin{algorithm}[{Buzzard's Slope algorithm}]\label{slopealg}
        We start defining a sequence $t_k$. It will turn out $s_2=\kappa(d(2),0)$ and $s_k=t_k$ for $k>2.$ For $4\leq k\leq p+1$, let $t_k=\kappa(d(k),0)$ and $t_2=\kappa(d_p(2)-d(2),0)$. Set $k_{min}=p+3$. \\
        Now, assume that $k\geq k_{min}$ is even and we have $t_l$ for all even $l<k$. We now define $t_k$ depending on three parameters $x,y,z$. \\
        $x$ is defined as the unique positive integer such that $$p^x<k-1\leq p^{x+1},$$ $y$ be the positive integer satisfying $$p^xy<k-1\leq p^x(y+1).$$ Set $$z=1+\bigg\lfloor\frac{k-2-p^xy}{p^{x-1}}\bigg\rfloor.$$ Then $1\leq z\leq p$. Let $m$ be the number of cusps of $X_0(N)$. We define a sequence $V$ which are the first few slopes of $t_k$. The algorithm used for $V$ will depend on $y,z$ on the following three cases: $y+z\leq p-1,y<p-1<y+z,y=p-1$.\\
        \begin{enumerate}
            \item When $y+z\leq p-1$: We let 
            \begin{equation*}
                \begin{split}
                    & k_1=k-y(p-1)p^{x-1} \\ & k_2=k-(y-1)(p-1)p^{x-1}-2(y+z-1)p^{x-1}.
                \end{split}
            \end{equation*}
            Set $v_1=t_{k_1}$ and $v_2=t_{k_2}$, and
            define 
            \[
            B=p^xy+p^{x-1}(z-1)+1,\ e=k-B,\ \epsilon=\chi^{1-B}
            \]
            where $\chi$ is any Dirichlet character of conductor $p$ and order $p-1$. Finally set 
            \begin{equation*}
                s=1+d_{p,\epsilon}(1+e).
            \end{equation*}
            If $l(v_1)\geq s-1$, then let $V_1=\sigma(v_1,s-1)$. \\
            Otherwise let $V_1=v_1\cup(e-\sigma(v_2,s-1-l(v_1)))$. \\
            Finally let $V=V_1\cup \kappa(m,e)$.\\
            \item When $y<p-1<y+z$: We set 
            \begin{equation*}
                \begin{split}
                    & k_1=k-((y+1)p^{x-1}(p-1))\\
                & k_2=k-p^{x-1}(p-1).
                \end{split}
            \end{equation*}
            We let $v_1=t_{k_1}$ and $v_2=t_{k_2}$, and
            define 
            \begin{equation*}
                B=(y+1)p^{x-1}(p-1)+1,\ e=k-B.
            \end{equation*}
            Finally set 
            \begin{equation*}
                s=1+d_p(1+e),\ s_2=\lfloor (s-1)/2\rfloor,\ e_2=\lfloor e/2\rfloor.
            \end{equation*}
            If $l(v_1)\geq s-1$, let $V_1=\sigma(v_1,s-1)$.\\
            Else if $s-1\leq 2l(v_1)<2(s-1)$, let $V_1=v_1\cup(e-\sigma(v_1,s-1-l(v_1)))$.\\
            Else then define $w=\sigma(v_2,l(v_1)+1,s_2)$.
            \begin{itemize}
                \item  If $s$ is even, let $V_1=v_1\cup w\cup [e_2]\cup(e-1-w)\cup(e-v_1)$, 
                \item if $s$ is odd, let $V_1=v_1\cup w\cup (e-1-w)\cup(e-v_1)$.
            \end{itemize}
            Finally if $e=1$, define $V=V_1\cup\kappa(m-1,1)$ and $V=V_1\cup\kappa(m,e)$ otherwise.
            \item When $y=p-1$: We let 
            \begin{equation*}
                \begin{split}
                    & k_1=k-p^x(p-1)\\
                & k_2=k-p^{x-1}(p-1).
                \end{split}
            \end{equation*}
            We set $v_1=t_{k_1}$ and $v_2=t_{k_2}$, and
            set 
            \begin{equation*}
                B=p^a(p-1), e=k-B.
            \end{equation*}
            Next, set $s=1+d_p(1+e)$ and $s_2$ and $e_2$ as above.
            If $l(v_1)\geq s-1$, then we set $V_1=\sigma(v_1,s-1-l(v_1))$.\\
            Else if $s-1\leq 2l(v_1)<2(s-1)$, let $V_1=v_1\cup(e-\sigma(v_1,s-1-l(v_1)))$.\\
            Else define $w_0=\sigma(v_2,l(v_1)+1,s_2)$ and $w=\min(w_0+1,\kappa(l(w_0),e_2))$ and 
            \begin{itemize}
                \item if $s$ is even $V_1=v_1\cup w\cup [e_2]\cup (e-1-w)\cup (e-v_1)$
                \item and if $s$ is odd $V_1=v_1\cup w\cup (e-1-w)\cup(e-v_1)$.
            \end{itemize}
            Finally if $e=1$ we let $V=V_1\cup\kappa(m-1,1)$ and $V=V_1\cup \kappa(m,e)$ otherwise.
        \end{enumerate}
        Now, finally we define $t_k=\sigma(V\cup(e+v_3),d(k))$.
    \end{algorithm}


    \begin{remark}
        In Buzzard's algorithm, we notice that there is a step when we add slopes equal to $e$ in the quantity of the number of cusps of $X_0(N)$. We want to emphasize that when we take the $\bar\rho$ component, no such things will happen as they are all associated to evil eisenstein series and they are related to split $\bar\rho$ components which do not appear in our setting where we assume $\bar\rho$ to be non-split.
    \end{remark}

\section{The variant slope algorithm for classical modular forms}
In this appendix, we show how the variant of the slope conjecture for abstract modular forms can carry over to a variant of the original slope conjecture of Buzzard for classical modular forms. 
\\We first take $\bar r\colon G_\QQ\to\GL_2(\FF)$(not fixed, this will vary) to be an absolutely irreducible representation but reducible when restricted to the decomposition group and equal to 
    $$\begin{pmatrix}
        \omega_1^{a+b+1} & *\neq0\\
        0 & \omega_1^b
    \end{pmatrix}$$ 
when restricted to the inertia group. We define some notation. For $k$ even, and at least $p+1$, $k_0$ be the remainder dividing $a+2b+2$ by $p-1$.\\
We are only dealing with the case when $\bar r_p$ is non-split, we define the sequence of $T_p$-slopes on the space $S_k(\Gamma_0(N))_{\bar r}$ to be the sequence $v_{\bar r}(k)$(note that we changed what goes in the subscript). Then, if we change the definitions in the notation above explaining Buzzard's conjecture as $d_{\bar r}(k)=\dim S_k(\Gamma_0(N))_{\bar r}$, $d_{p,\bar r}(k)=\dim S_k(\Gamma_0(Np))_{\bar r}$, We will sometimes use a separate notation for $d_{p,\epsilon,\bar r}(k)=\dim S_k(\Gamma_0(N)\cap\Gamma_1(p),\epsilon)_{\bar r}$ where $\epsilon$ is a power of the mod $p$ teichmuller lift and $\epsilon$ and $\bar r$ satisfies $\epsilon=\omega^{k-2-a-2b}$ as we mentioned that we will use an abuse of notation assuming we are interested in nonzero dimensional spaces of cusp forms. To use the definition used in \cref{mainthm}, we denote $B(p,N,\bar r)(k)=$the sequence $s_{\bar r}(k)$ outputted with given input $p,N,\bar r$. We note that there is a constarint on $k$ $\mod p-1$ to make the space of modular forms nonzero. We will ignore all other cases and assume we are with the right pairs of $\bar r $ and $k$.

    \begin{algorithm}
        First, assume that $\bar r_p$ is non-split. We start defining a sequence $t_{\bar r}(k)$. It will turn out $s_{\bar r}(2)=\kappa(d_{\bar r}(2),0)$ and $s_{\bar r}(k)=t_{\bar r}(k)$ for $k>2.$ For $4\leq k\leq p+1$, let $t_{\bar r}(k)=\kappa(d_{\bar{r}}(k),0)$ and $t_2=\kappa(d_p(2)-d(2),0)$. Set $k_{min}=p+3$. \\
        Now, assume that $k\geq k_{min}$ is even and we have $t_l$ for all even $l<k$. We now define $t_k$ depending on three parameters $x,y,z$. \\
        $x$ is defined as the unique positive integer such that \begin{equation*}
            p^x<k-1\leq p^{x+1}
        \end{equation*} 
        $y$ be the positive integer satisfying 
        \begin{equation*}
            p^xy<k-1\leq p^x(y+1).
        \end{equation*} 
        Set 
        \begin{equation*}
            z=1+\lfloor\frac{k-2-p^xy}{p^{x-1}}\rfloor.
        \end{equation*} 
        Then $1\leq z\leq p$. We define a sequence $V$ which are the first few slopes of $t_{\bar r}(k)$. The algorithm used for $V$ will depend on $y,z$ on the following three cases: $b+c\leq p-1,y<p-1<y+z,y=p-1$.\\
        \begin{enumerate}
            \item When $y+z\leq p-1$: We let 
                \begin{equation*}
                    \begin{split}
                        & k_1=k-y(p-1)p^{x-1}\\
                    & k_2=k-(y-1)(p-1)p^{x-1}-2(y+z-1)p^{x-1}.
                    \end{split}
                \end{equation*} 
                Set 
                \begin{equation*}
                    v_1=t_{\bar r}(k_1),\ v_2=t_{\bar r\otimes\omega^{1-B}}(k_2).
                \end{equation*}
                Define 
                \begin{equation*}
                    B=p^xy+p^{x-1}(z-1)+1,e=k-B,\epsilon=\chi^{1-B}.
                \end{equation*} 
                where $\chi$ is any Dirichlet character of conductor $p$ and order $p-1$. Finally set 
                \begin{equation*}
                    s=1+d_{p,\epsilon,\bar r}(1+e).
                \end{equation*}
                If $l(v_1)\geq s-1$, then let $V=\sigma(v_1,s-1)$. \\
                Otherwise let $V=v_1\cup(e-\sigma(v_2,s-1-l(v_1)))$.
            \item When $y<p-1<y+z$: We set 
                \begin{equation*}
                    \begin{split}
                        & k_1=k-((b+1)p^{x-1}(p-1))\\
                    & k_2=k-p^{x-1}(p-1).
                    \end{split}
                \end{equation*}
                We let $v_1=t_{\bar r}(k_1)$ and $v_2=t_{\bar r}(k_2) $, and
                define 
                \begin{equation*}
                    B=(y+1)p^{x-1}(p-1)+1,\ e=k-B.
                \end{equation*}
                Finally set 
                \begin{equation*}
                    s=1+d_{p,\bar r}(1+e),\ s_2=\lfloor (s-1)/2\rfloor,\ e_2=\lfloor e/2\rfloor.
                \end{equation*}
                If $l(v_1)\geq s-1$, let $V=\sigma(v_1,s-1)$.\\
                Else if $s-1\leq 2l(v_1)<2(s-1)$, let $V=v_1\cup(e-\sigma(v_1,s-1-l(v_1)))$.\\
                Else then define $w=\min(\sigma(v_2,l(v_1)+1,s_2),e_2)$. 
                \begin{itemize}
                    \item If $s$ is even, let $V=v_1\cup w\cup [e_2]\cup(e-1-w)\cup(e-v_1)$, 
                    \item if $s$ is odd, let $V=v_1\cup w\cup (e-1-w)\cup(e-v_1)$.
                \end{itemize}
            \item When $y=p-1$: We let 
                \begin{equation*}
                    \begin{split}
                        & k_1=k-p^x(p-1)\\
                    & k_2=k-p^{x-1}(p-1)
                    \end{split}
                \end{equation*}
                We set $v_1=t_{\bar r}(k_1)$ and $v_2=t_{\bar r}(k_2)$.
                Set 
                \begin{equation*}
                    B=p^x(p-1),\ e=k-B.
                \end{equation*}
                Next, set 
                \begin{equation*}
                    s=1+d_{p,\bar r}(1+e),s_2,e_2 \text{ as above.}
                \end{equation*}
                If $l(v_1)\geq s-1$, then we set $V=\sigma(v_1,s-1-l(v_1)$.\\
                Else if $s-1\leq 2l(v_1)<2(s-1)$, let $V=v_1\cup(e-\sigma(v_1,s-1-l(v_1)))$.\\
                Else define $w_0=\sigma(v_2,l(v_1)+1,s_2)$ and $w=\min(w_0+1,\kappa(l(w_0),e_2))$ \begin{itemize}
                    \item if $s$ is even $V=v_1\cup w\cup [e_2]\cup (e-1-w)\cup (e-v_1)$ and
                    \item if $s$ is odd $V=v_1\cup w\cup (e-1-w)\cup(e-v_1)$.
                \end{itemize} 
        \end{enumerate}
        Now, finally we define $k_3=2B-k$, $v_3=t_{\bar r\otimes\omega^{B-k}}(k_3)$ $t_k=\sigma(V\cup(e+v_3),d_{\bar r}(k))$.
    \end{algorithm}

We give a remark on why this algorithm is effective following 

    \begin{remark}
        Note that by \cite{interpol}, if we let $$\tilde H=\varprojlim_m H_1^{et}(Y(K^p(1+p^mM_2(\ZZ_p)))_{\bar\QQ},\cO)_{\fm_{\bar r}}^{cplx=1},$$ where $K^p$ is a neat tame level $K^p\subset\GL_2(\AA_f^p)$, then 
            \begin{equation}\label{absmodetcoh}
                \Hom_{\cO[[\GL_2(\ZZ_p)]]}(\tilde H),\Sym,^{k-2}\cO^{\oplus 2}\simeq H_{et}^1(Y(K^p\GL_2(\ZZ_p))_{\bar \QQ},\Sym^{k-2}(R^1\pi_*\underline{\cO}))_{\fm_{\bar r}}^{cplx=1}
            \end{equation}
            where the right hand side of \cref{absmodetcoh} is isomorphic to the space of classical modular forms. Hence the theory from \cref{varalg} and \cref{mainproof} with $\tilde H$ as above and $\epsilon=1\times\omega^{a+2b}$ is the relevant case to us, and from Remark 2.30 of \cite{liu2022local}, we have that twisting $\epsilon$ and $\tilde H$ simultaneously does not change the ghost series, this lets us twist the Galois representation along with $\epsilon$ to make $\epsilon$ of the form $1\times\omega^t$ for some $t$. If we do this operation for all the steps in \cref{varalg}, we get the algorithm in this appendix.
    \end{remark}
\printbibliography
\end{document}